\numberwithin{equation}{section}
\theoremstyle{plain}
\newtheorem{thm}{Theorem}[section]
\newtheorem{cor}[thm]{Corollary}
\newtheorem{lem}[thm]{Lemma}
\newtheorem{prop}[thm]{Proposition}
\theoremstyle{definition}
\newtheorem{defn}{Definition}
\newtheorem{remark}{Remark}
\newtheorem{assumption}{Assumption}
\theoremstyle{remark}
\newcommand{\R}{\mathbb{R}^d}
\newcommand{\Ll}{\mathscr{L}}
\newcommand{\M}{\mathscr{M}_{+}\left(\R \right)}
\newcommand{\vp}{\varphi}
\newcommand{\D}{\mathscr{D}}
\newcommand*\dd{\mathop{}\!\mathrm{d}} 
\title{The global attractor of the inelastic linear Boltzmann equation in a gravity field for Maxwell molecules}
\author{Th\'eophile Dolmaire, Nicola Miele, Alessia Nota}
\date{\today }
\begin{document}

\maketitle

\begin{abstract}
\noindent
In this article we consider the linear inelastic Boltzmann equation in presence of a uniform and fixed gravity field, in the case of Maxwell molecules.   
We first obtain a well-posedness result in the space of finite, non-negative Radon measures. In addition, we rigorously prove 
the existence of a stationary solution under the non-equilibrium condition which is induced by the presence of the external field. We further show that this stationary solution is unique in the class of the finite, non-negative Radon measures with finite first order moment, and that all the solutions in this class converge towards the stationary solution in the weak topology of the measures.  
\end{abstract}

\noindent
\textbf{Keywords:}  Linear Boltzmann Equation;
Lorentz Gas; 
Inelastic collisions;
Granular Media;
Acceleration Field; 
Out-of-equilibrium Steady States;
Stationary Solutions;
Global Attractor
\tableofcontents

\maketitle

\section{Introduction} 

\subsection{The model}

In the present article, we will consider an inelastic Lorentz gas with Maxwell molecule interactions under the action of a uniform and fixed acceleration field, in dimension $d =2$ or $d=3$. The evolution of the tagged particle interacting inelastically with a set of fixed and randomly distributed scatterers is described by the inelastic linear Boltzmann equation, with collision kernel of the form:
\begin{align}
\label{EQUATIntroMaxwlColliKernl}
B\left(\left\vert N \cdot \omega \right\vert,\vert v \vert\right) = b \left(\left\vert N\cdot \omega \right\vert \right) \hspace{3mm} \text{with} \hspace{3mm} \quad N:=\frac{v}{\vert v \vert},
\end{align}
which is the Maxwell molecules collision kernel. The presence of the uniform acceleration field is expressed by the presence of a drift term in the equation. We will consider the space homogeneous situation, that is, the case when the distribution function $f$ of the tagged particle is independent from the position variable, and depends only on the time variable $t$, and the velocity $v$ of the tagged particle. We will therefore consider the equation:
\begin{align}
\label{EQUATIntroLineaInelaBoltz}
\partial_t f(t,v) + a \cdot \partial_v f(t,v) = \int_{\mathbb{S}^{d-1}} \frac{1}{r} b(| \hspace{0.5mm}'\hspace{-0.5mm}N \cdot \omega|)f(t,'\hspace{-1mm}v) \dd \omega - \left( \int_{\mathbb{S}^{d-1}} b(|N \cdot \omega|) \dd \omega \right) f(t,v),
\end{align}
where we used the notation $\hspace{0.5mm}'\hspace{-0.5mm}N = \hspace{0mm}'\hspace{-0.5mm}v/\vert '\hspace{-0.5mm}v \vert$, $a \in \mathbb{R}^d$ is the acceleration field, and $'\hspace{-0.5mm}v$ is the pre-collisional velocity of $v$, that is, such that a particle colliding with a scatterer with velocity
$'\hspace{-0.5mm}v$ and angular parameter $\omega$ is reflected with the velocity $v$. Finally, we assume that the inelastic collisions take place according to the following collision law:
\begin{align}
v =\hspace{0.0mm} '\hspace{-0.5mm}v - (1+r)('\hspace{-0.5mm}v\cdot\omega)\omega \hspace{3mm} \text{or equivalently} \hspace{3mm} '\hspace{-0.5mm}v = v - (1+1/r)(v\cdot\omega)\omega,
\end{align}
where $r \in \, \, ]0,1]$ is the \emph{restitution coefficient}, which is assumed through all the paper to be a fixed real number (the case $r=1$ corresponding to the elastic case).\\
\newline
\noindent
We will also consider the following generalization of \eqref{EQUATIntroLineaInelaBoltz}: the inelastic Boltzmann equation with Maxwell molecules interacting with fixed scatterers, and \emph{rehomogeneized collision operator}. Notice that this procedure will lead to a nonlinear version of the collision operator, satisfying however a physically relevant decay of the temperature. Eventually, the collision operator will be:
\begin{align}
\label{EQUATIntroLineaInelaBoltzRehom}
\partial_t f(t,v) &+ a \cdot \partial_v f(t,v)  \nonumber\\
&=
T^\mu(t)\int_{\mathbb{S}^{d-1}} \frac{1}{r}  b(|\hspace{0.5mm}'\hspace{-0.75mm}N \cdot \omega|) f(t,'\hspace{-1mm}v) \dd \omega - T^\mu(t)\left( \int_{\mathbb{S}^{d-1}}  b(|N \cdot \omega|) \dd \omega \right) f(t,v) 
\end{align}
with
\begin{equation}
%\hspace{3mm} \text{with} \hspace{3mm} 
T(t) = \frac{1}{2}\int_{\mathbb{R}^d} \vert v \vert^2 f(t,v) \dd v \hspace{2mm} \text{and} \hspace{2mm} \mu \geq 0.
\end{equation}
We discuss the well-posedness theory and the long time behaviour of \eqref{EQUATIntroLineaInelaBoltz}. More precisely, we prove that \eqref{EQUATIntroLineaInelaBoltz} is well-posed, globally in positive times, in the space of positive, finite Radon measures on $\mathbb{R}^d$, and if the initial data of \eqref{EQUATIntroLineaInelaBoltz} have finite first and second order moments, then so are also the first and second order moments of the associated solutions.\\
In addition, we prove that there exists a unique steady state $f_\infty$ to the equation \eqref{EQUATIntroLineaInelaBoltz} in the class $\mathcal{M}_+(\mathbb{R}^d)$ of positive, finite Radon measures, and that such a steady state is the unique global attractor in the class of measures of $\mathcal{M}_+(\mathbb{R}^d)$ with finite first order moment. In other words, for any initial datum $f_0$ of $\mathcal{M}_+(\mathbb{R}^d)$ with finite first order moments, the solution $f$ converges towards $f_\infty$, in the sense of weak convergence of the measures.\\
Finally, we provide a complete study of the system of the moments of zeroth, first and second order of \eqref{EQUATIntroLineaInelaBoltzRehom}, proving that the first and second order moments converge towards unique limiting values, for any physical initial data.

\subsection{Lorentz gas and granular material: a brief review of the literature}

\paragraph{Existing results concerning conservative Lorentz gases.} The Lorentz gas model, introduced by H. A. Lorentz in 1905 to study electron transport in metals, describes the dynamics of non-interacting particles moving through a fixed random configuration of heavy scatterers. The interaction between the Lorentz particle and the scatterers is characterized by a finite-range central potential.  Despite its simplicity, it represents a yet profoundly significant model providing key insights on how microscopic reversibility can be reconciled with macroscopic irreversibility.  In fact, from this model, one can obtain, under suitable
scaling limits, linear kinetic equations (\cite{Gallavotti, BBS, LuTo, N, NSV, Spohn1, Spohn2}), and, from these, one can derive diffusion equations in longer time scales (cf.  \cite{BGSR016, BaNP014, BNP, LuTo}).\\
In most of the mathematical studies of the linear Boltzmann equation, it has been assumed that additional transport terms describing the effect of possible external fields are absent.  
However, the presence of external fields significantly affects its derivation in low-density regimes, as well as the properties of its solutions.  More precisely, in \cite{BMHH} and later in \cite{BHPH} (see also \cite{KuSp} where the model has been studied numerically), it has been shown that the motion of a test particle in $\mathbb{R}^2$ with a Poisson distribution of hard disk scatterers and a uniform, constant magnetic field perpendicular to the plane formally leads to the generalized Boltzmann equation with
memory terms. A rigorous derivation of this equation has been recently obtained in \cite{NSS}. We also refer to \cite{MN},
where linear kinetic equations with a magnetic transport term have been derived.
On the other hand we also mention \cite{BGG} where the well-posedness of a slightly different linear Boltzmann equation  with external field, specifically a space periodic electric field, has been studied in the time-stationary regime.\\
\noindent
It is important to note that all the results mentioned above pertain to systems where the dynamics of the Lorentz particle is conservative, implying that each collision with the background obstacles is perfectly elastic. In contrast, issues concerning the well-posedness of solutions to linear kinetic equations in the presence of external fields, as well as the long-term behavior and thermalization of solutions, have been significantly less explored in non-conservative systems. 

\paragraph{Qualitative behaviour of granular gases. } Turning to the case of dissipative gases, we first mention that considering inelastic collisions between the elementary components of a fluid is motivated by many physical situation: snow, sand or interstellar dust may be modelled by a very large number of particles that dissipate kinetic energy during collisions. Such a model has countless applications, in theoretical physics as well as in industry (see in particular \cite{BeJN996}). Systems composed of a large number of particles that interact inelastically, the so-called \emph{granular media}, present a wide range of very interesting behaviours, and exhibit differences of fundamental nature with respect to conservative fluids, the evolution of which is governed in the low density limit by the classical, elastic Boltzmann equation. For a brief introduction to the peculiarities of the granular media, the reader may consult the surveys \cite{BeJN996} and \cite{Kada999}. Let us present two of such specificities, which are maybe the most prominent.\\
Within granular media, the dissipation of kinetic energy during collisions induces a decay of the temperature in the case when there is no external source that is injecting energy in the system. In many cases, it is possible to prove that the temperature decays according to a power law: this is the celebrated Haff's law \cite{Haff983}. Observe that the algebraic exponent of such a decay depends on the properties of the restitution coefficient (see \cite{BrPo004}).\\
A second characteristic of granular media is the trend to create spontaneously stable clusters (or regular patterns in some particular cases), even when the gas departs from an initial state very close to be spatially homogeneous. This behaviour is widely documented (\cite{GoZa993}, \cite{Kada999}, \cite{MNYo996}, \cite{BrPo004}), but still not completely understood.\\
In addition, in the case of a fixed restitution coefficient that is small enough, inelastic particle systems exhibit \emph{inelastic collapse}: infinitely many collisions take place in finite time. At the level of the kinetic equation, this phenomenon can be identified by the convergence in finite time of the solutions towards Dirac masses, which represents a brutal concentration of the velocity profile of the distribution function. A vast literature exists about this phenomenon: see \cite{MNYo994}, \cite{BeCa999}, \cite{BrPo004}, \cite{PoSc005}, \cite{DoVe024}, \cite{DoHu024}, \cite{DoVe025}, \cite{Dolm025} and the references therein.\\
On the one hand, the inelastic collapse phenomenon can be seen as an extreme case of clusterization. Such a phenomenon constitutes a major obstruction to perform numerical simulations, and to provide a rigorous derivation of inelastic kinetic equations (no analog of Lanford's theorem \cite{Lanf975} exists for such models). On the other hand, it is still unclear if such a phenomenon exists for models different from the particular case of fixed restitution coefficient, nor if the dynamics of the particle system is globally well-posed, even when a collapse occurs (see \cite{DoVe024}, \cite{DoVe025}).

\subsection{Mathematical theory of the inelastic Boltzmann equation}
\label{SSectIntroMatheTheorInelaBoltz}

In this section, we briefly review the state of the art concerning the inelastic Boltzmann equation. Contrary to the elastic Boltzmann equation, the dissipative character of the collisions causes the solutions to develop singularities (concentrations of the velocity profile, explosion of the gradient $\partial_x f$...), and the trend to create clusters prevents to consider linearization around an homogeneous solution. For these reasons, the mathematical theory of the inelastic Boltzmann equation is still quite incomplete, and many results are obtained only in particular cases, or for related, simpler models.

\paragraph{The non-linear inelastic Boltzmann equation.} We refer to \cite{Vill006} for a very complete survey about the mathematical state of the art concerning the non-linear version of the equation, and to the more recent \cite{CHMR021}, which addresses also the question of the numerical simulations.\\
Rigorous results were first obtained in the one-dimensional case (\cite{BeCP997}, \cite{BeCP997_2}, \cite{Tosc000}), before that the case of higher dimension was studied by many authors. This research was motivated by from a conjecture due to Ernst and Brito \cite{ErBr003}, postulating that, while the temperature of a granular gas decays according to the Haff's law, the velocity profile $f(t,v)$ should converge towards a self-similar solution of the inelastic Boltzmann equation:
\begin{align}
f(t,v) \xrightarrow[t\rightarrow+\infty]{} \alpha(t) m(\beta(t) \vert v \vert),
\end{align}
where $\alpha,\beta:\mathbb{R}_+ \rightarrow \mathbb{R}_+^*$ are two scaling functions, and such that the self-similar profile $m$ should present overpopulated high energy tails: $m(\vert v \vert) \sim e^{-a \vert v \vert^b}$ as $\vert v \vert \rightarrow +\infty$ for some $0 < b < 2$ and $a > 0$.\\
At the same time, a simplified version of the inelastic Boltzmann equation was introduced, corresponding to Maxwell molecule interaction (the assumption \eqref{EQUATIntroMaxwlColliKernl} on the collision kernel). In such a case, the evolution of the moments can be determined. Nevertheless, as observed for instance in \cite{Vill006}, such a model would lead to an exponential decay of the temperature, which does not correspond to the algebraic decay prescribed by the Haff's law. For this reason, several authors (\cite{CaCG000}, \cite{BoCe003}) considered a rehomogeneization factor $T^\mu(t)$ in front of the collision operator (as in \eqref{EQUATIntroLineaInelaBoltzRehom}) in order to restore  the homogeneity of the collision operator, and to recover an ad hoc Haff's law.\\
In order to restore also the existence of steady states to the inelastic Boltzmann equation, variations of the space-homogeneous version for Maxwell molecules were considered. For instance, it is possible to consider the inelastic particles in a thermal bath, reinjecting the energy that is lost during the collisions (first in \cite{BCCP998}, then, for instance, in \cite{CaCG000}). Another effect competing with the inelastic collisions that was considered is the presence of a shear \cite{Cerc001}.\\
Finally, important progresses were completed in the series of articles \cite{MMRR006}, \cite{MiMo006} and \cite{MiMo009}, in which the space-homogeneous, inelastic Boltzmann equation is considered for the hard sphere collision kernel, which corresponds better to a physical model than the Maxwell molecule kernel. In particular, the Cauchy theory is established in a quite general setting, and the Haff's law as well as the Ernst-Brito conjecture are proved for restitution coefficients close enough to $1$. We mention also \cite{AlLo010}, in which the Haff's law is proved for viscoelastic particles, with optimal lower and upper bounds.\\
To conclude this discussion concerning the non-linear inelastic Boltzmann equation, we emphasize that, to the best of our knowledge, the only references that deal with the spatially non-homogeneous version of (in general dimension) are \cite{Alon009} and \cite{AlLT024}, where the well-posedness of the solutions is obtained only close to the vacuum in \cite{Alon009}, and close to thermal equilibrium, for restitution coefficients $r$ close enough to $1$ (that is, close enough to the elastic case) in \cite{AlLT024}.

\paragraph{The linear inelastic Boltzmann equation.} 
Linear inelastic models have been less discussed in the literature. Concerning the Rayleigh gas, that is, the evolution of a tagged particle within a background gas at thermal equilibrium, several directions were investigated. The existence of solutions, as well as long-time behaviour considerations are discussed in \cite{Pett004}. The discussion of the existence of an equilibrium, and the convergence of the solutions towards such an equilibrium can be found in \cite{SpTo004} in the case of Maxwell molecules and in \cite{LoTo004} in the case of hard spheres. In both cases, it is proved that the solutions converge towards a Maxwellian steady state (and so, contrary to the non-linear case, the tail of such a profile does not satisfy the Ernst-Brito conjecture).\\
\newline
\noindent
In the case of a Lorentz gas, that is, when the tagged particles collide with fixed obstacles, there is no source of energy to compensate the inelastic collisions, and so the gas cools down. The behaviour of such a system was studied in \cite{HDHa001}, relying on formal and numerical investigations. In particular, the diffusion of the particles in space is described, as well as the evolution of the velocity profile.\\
In order to have the existence of non-trivial steady states, the inelastic Lorentz gas was also considered when evolving under the action of an acceleration field (that can be seen as the gravity, or as an electric or magnetic field acting on charged particles). Such a situation was studied in \cite{MaPi999}, where a Lorentz gas of inelastic hard spheres evolves under the action of a constant acceleration field. In this reference, the steady states of the linear inelastic Boltzmann equation are completely determined in dimension $d=1$, and a series expansion of a steady state is given in dimension $d=3$, in the limit of small inelasticity.\\
In \cite{MaPi007}, the case of an inelastic Lorentz gas composed with Maxwell particles is considered. The existence of a steady state is obtained, based on the use of the Fourier transform. In particular, an explicit, integral formula is given for the steady state. Then, the formula of such a steady state is considered in the limit of small inelasticity.\\
We observe that in \cite{MaPi999} and \cite{MaPi007}, the uniqueness of the steady states and their stability are not discussed. In addition, the explicit formulas are good approximations only in the case when the particles interact almost elastically with the scatterers.

\subsection{Notations}

For the rest of the article, we will make use of the following notations. We will work in the space of continuous functions on $\mathbb{R}^d$. Depending on the situation, we will consider real- or complex-valued continuous functions, belonging to spaces respectively denoted by $\mathcal{C}(\mathbb{R}^d,\mathbb{R})$ and $\mathcal{C}(\mathbb{R}^d,\mathbb{C})$. When the context will make it clear, we will omit the space in which the functions take their values, so that $\mathcal{C}(\mathbb{R}^d)$ stands for $\mathcal{C}(\mathbb{R}^d,\mathbb{C})$ or $\mathcal{C}(\mathbb{R}^d,\mathbb{C})$. Nevertheless, we will also consider non-negative real valued functions of $\mathcal{C}(\mathbb{R}^d)$, the space of such functions will be denoted by $\mathcal{C}(\mathbb{R}^d,\mathbb{R}_+)$.\\
We will also denote by:
\begin{itemize}
\item $\mathcal{C}^\infty(\mathbb{R}^d)$ the space of infinitely differentiable functions on $\mathbb{R}^d$, and $\mathcal{C}_c^\infty(\mathbb{R}^d)$ its subspace of compactly supported functions,
\item $\mathcal{C}_0(\mathbb{R}^d)$ the space of continuous functions vanishing at infinity,
\item $\mathcal{C}_b(\mathbb{R}^d)$ the space of continuous, bounded functions,
\item these spaces will be equipped with the uniform norm, denoted by $\vert \hspace{-0.5mm} \vert \cdot \vert \hspace{-0.5mm} \vert_\infty$.
\end{itemize}
In addition, we introduce the following norms:
\begin{itemize}
\item for any $p > 0$:
\begin{align*}
\vert \hspace{-0.5mm} \vert \cdot \vert \hspace{-0.5mm} \vert_{\infty,p} = \sup_{\xi} \frac{\vert \cdot \vert}{1 + \vert \xi \vert^p},
\end{align*}
and the space $\mathcal{C}_p(\mathbb{R}^d,\mathbb{C})$, denoted also by $\mathcal{C}_p$ in short, as the subspace of $\mathcal{C}(\mathbb{R}^d)$ with a finite $\vert \hspace{-0.5mm} \vert \cdot \vert \hspace{-0.5mm} \vert_{\infty,p}$ norm,
\item the supremum norm in $t$ in the interval $I \subset \mathbb{R}$ for the $\vert \hspace{-0.5mm} \vert \cdot \vert \hspace{-0.5mm} \vert_{\infty,p}$ norm: $\vert \hspace{-0.5mm} \vert \hspace{-0.5mm} \vert \cdot \vert \hspace{-0.5mm} \vert \hspace{-0.5mm} \vert_I:=\sup_{t\in I}\vert \hspace{-0.5mm} \vert \cdot \vert \hspace{-0.5mm} \vert_{\infty,p}$,
\item the total variation norm: $\|\cdot\|_{\mathscr{M}}$,
\item the operator norm:  $\|\cdot\|$ or $\|\cdot\|_X$ in case we want to emphasize the Banach space $X$.
\end{itemize}
\noindent
Moreover, we will denote by $\mathscr{M}(\R)$ the space of finite signed Radon measures on $\R$ and by $\M$ the non-negative cone of $\mathscr{M}(\R)$. For a general locally compact topological space $X$ it is possible to characterize the space by duality. More precisely, the classical Riesz-Markov-Kakutani theorem, see for instance \cite{Rudin},  provides
\begin{equation}
        (\mathcal{C}_0(X))^*=\mathscr{M}(X).
\end{equation}
We recall the usual embeddings of the functional spaces that we will constantly use throughout this article:
\begin{align}
\mathcal{C}^\infty_c(\mathbb{R}^d) \subseteq \overline{\mathcal{C}^\infty_c(\mathbb{R}^d)}^{\vert\hspace{-0.5mm}\vert \cdot \vert\hspace{-0.5mm}\vert_\infty} = \mathcal{C}_0(\mathbb{R}^d) \subseteq \mathcal{C}_b(\mathbb{R}^d).
\end{align}
For a measure $f \in\mathscr{M}(X)$ we denote its total variation norm, or equivalently its norm as a continuous linear functional on $\mathcal{C}_0(X)$,  by $\|f\|_{\mathscr{M}}$. Moreover we notice that since the measure is finite, the equality $\|f\|_{\mathscr{M}}=|f|(X)$ is well-defined. We will use the weak$-*$ topology on the space $\mathcal{C}_0(X)$, and we will refer to this topology as the \emph{weak topology of $\mathscr{M}(X)$}, as customary in the literature (e.g., see \cite{Fe2}).

\subsection{Main results}

Our starting point is the Cauchy problem for \eqref{EQUATIntroLineaInelaBoltz}, i.e.
\begin{equation}\label{eq:LinInBolstrong_Cauchy}
    \begin{cases}
 \displaystyle\partial_t f(t,v) + a \cdot \partial_v f(t,v) = \int_{\mathbb{S}^{d-1}} \frac{b( \vert \hspace{0.5mm}'\hspace{-0.75mm}N\cdot \omega\vert )}{r}  f(t,'\hspace{-1mm}v) \dd \omega - \left( \int_{\mathbb{S}^{d-1}}  b( \vert N\cdot \omega \vert ) \dd \omega \right) f(t,v),
& \\
        f(0,v)=f_0(v),
    \end{cases}
\end{equation}
where
\begin{align}
\label{EQUAT_Loi_Colli}
N = \frac{v}{\vert v \vert},\hspace{5mm} \hspace{0.5mm}'\hspace{-0.75mm}N = \frac{\hspace{0.5mm}'\hspace{-0.5mm}v}{\vert '\hspace{-0.5mm}v \vert},\hspace{5mm} \text{and} \hspace{5mm} '\hspace{-0.5mm}v = v - (1+1/r) \big(v\cdot\omega\big)\omega.
\end{align}
The general assumption on the collision kernel $B$ we will use in this paper is the following

\begin{assumption}\label{ass:kernelB} 
We assume that the angular part $b$ of the  collision kernel $B$ satisfies 
\begin{equation}\label{ass:b}
    b(\vert N \cdot \omega\vert ) \in L^{\infty}([0,1]), \quad \text{where} \quad N:= \frac v {\vert v \vert }\, .
\end{equation}
\end{assumption}
\noindent
We now introduce the definition of weak solutions for the equation \eqref{eq:LinInBolstrong_Cauchy} that we will use.

\begin{defn}[Weak solution of \eqref{eq:LinInBolstrong_Cauchy}]\label{def:weakSol}
 	Let $f_0 \in \M$ be a non-negative finite Radon measure, let $T>0$ be a positive number and let the collision kernel $b(\vert N \cdot \omega\vert )$ satisfy Assumption \ref{ass:kernelB}. A measure $f \in \mathcal{C}\left( [0,+\infty[,\M\right)$ is a \emph{weak solution} of \eqref{eq:LinInBolstrong_Cauchy} if for every test function $\vp \in \mathcal{C}^1\left( [0,+\infty[,\mathcal{C}^1_0(\R) \right) $ one has
\begin{equation}\label{eq:weakf} 
\int_{\R}\vp(T,v)f(T,\dd v)- \int_{\R}\vp(0,v)f_0(\dd v) = \int_0^T \dd t \int_{\R} f(t,\dd v)\left[\partial_t\vp + a \cdot \partial_v \vp+\Ll^*(\vp) \right](t,v)\dd t,
\end{equation}
where
\begin{equation}\label{eq:adjCollOp}
    \Ll^*(\vp) = \int_{\mathbb{S}^{d-1}} b( \vert N\cdot \omega \vert) \left[\vp(v')-\vp(v)\right]\dd \omega
\end{equation}
is the \emph{adjoint} of the collision operator $\Ll$ and $v'$ is defined as $v' = v - (1+r)(v\cdot\omega)\omega$.
\end{defn}

\noindent
We first provide a well-posedness result for \eqref{eq:LinInBolstrong_Cauchy}, whose proof will be presented in Section \ref{sec:WP}.

\begin{thm}[Well-posedness]\label{thm:wp1}
Let $d = 2$ or $3$, and let $a \in \mathbb{R}^d$ be a fixed vector. 
Assume that the restitution coefficient $r$ satisfies $0 < r \leq 1$. Let $f_0\in \mathscr{M}_+(\mathbb{R}^d)$ and let $b(\vert N \cdot \omega\vert )$ satisfy Assumption \ref{ass:kernelB}. Then, there exists a unique weak solution $f \in \mathcal{C}\left( [0,+\infty),\M \right)$ of \eqref{eq:LinInBolstrong_Cauchy}  in the sense of Definition \ref{def:weakSol}.
\end{thm}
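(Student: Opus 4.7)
The plan is to construct the solution by duality on the Banach space $\mathcal{C}_0(\R)$ of test functions, via a strongly continuous semigroup $U(t)$ whose adjoint produces $f(t)$. The crucial use of Assumption \ref{ass:kernelB} is that the collision frequency $\nu := \int_{\mathbb{S}^{d-1}} b(|N\cdot\omega|)\,\dd\omega$ is then a finite constant independent of $v$, so that the splitting $\Ll^*\vp = Q^{*+}\vp - \nu\vp$, with $Q^{*+}\vp(v) := \int_{\mathbb{S}^{d-1}} b(|N\cdot\omega|)\,\vp(v')\,\dd\omega$, exhibits $Q^{*+}$ as a bounded positive operator on $\mathcal{C}_b(\R)$ with norm at most $\nu$. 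Since the collision law gives $|v'|^2 = |v|^2 - (1-r^2)(v\cdot\omega)^2 \geq r^2 |v|^2$, the map $v\mapsto v'$ sends infinity to infinity, so $Q^{*+}$ also stabilizes $\mathcal{C}_0(\R)$.

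First, I would construct $U(t)$ by Picard iteration on the mild Duhamel formula
\begin{equation*}
U(t)\vp(v) = e^{-\nu t}\vp(v+at) + \int_0^t e^{-\nu(t-s)} Q^{*+}\bigl(U(s)\vp\bigr)(v+a(t-s))\,\dd s,
\end{equation*}
where the linear integral operator on the right has operator norm at most $(1-e^{-\nu t}) < 1$ on $\mathcal{C}([0,t],\mathcal{C}_0(\R))$. This gives a globally defined strongly continuous semigroup that is positivity-preserving (every term of the Picard series is non-negative when $\vp\geq 0$) and satisfies $\|U(t)\vp\|_\infty\leq\|\vp\|_\infty$ for $\vp\geq 0$ by a direct estimate, hence $\|U(t)\|_{\mathcal{C}_0\to\mathcal{C}_0}\leq 2$.

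Second, for $f_0\in\M$ I would define $f(t)\in\Mm$ by
\begin{equation*}
\int_\R \vp(v)\,f(t,\dd v) := \int_\R U(t)\vp(v)\,f_0(\dd v),\qquad \vp\in\mathcal{C}_0(\R),
\end{equation*}
which is a well-defined finite Radon measure via Riesz-Markov-Kakutani. Positivity-preservation of $U(t)$ gives $f(t)\in\M$ with $\|f(t)\|_{\mathscr{M}}\leq 2\|f_0\|_{\mathscr{M}}$, and strong continuity of $U(t)$ gives weak continuity of $t\mapsto f(t)$. The identity \eqref{eq:weakf} then follows from a direct computation: for $\vp\in\mathcal{C}^1([0,\infty),\mathcal{C}^1_0(\R))$, one differentiates $t\mapsto\langle\vp(t),f(t)\rangle = \langle U(t)\vp(t),f_0\rangle$ using the generator of $U(t)$ and integrates in $t$.

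Finally, uniqueness is obtained by a dual argument: if $g := f_1-f_2$ is the difference of two weak solutions with the same initial datum, plugging the ansatz $\vp(t,v) := U(T-t)\psi(v)$ for $\psi\in\mathcal{C}^1_c(\R)$ into \eqref{eq:weakf} formally annihilates the integrand (since $\vp$ solves the backward equation $\partial_t\vp + a\cdot\partial_v\vp + \Ll^*\vp = 0$ with terminal data $\psi$), leaving $\int_\R\psi\,\dd g(T) = 0$ and hence $g(T)=0$. The main technical obstacle I expect is the justification of this last step: $U(T-t)\psi$ need not be $\mathcal{C}^1$ in $v$ even for smooth $\psi$, because the collision map $v\mapsto v'$ is not globally $\mathcal{C}^1$ (the angular kernel is singular at $v=0$), so it is not directly admissible as a test function in Definition \ref{def:weakSol}. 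This will have to be handled by a regularization step, for example by mollifying $\psi$ and $f_0$, verifying the identity on the regularized problem, and passing to the limit using the density of $\mathcal{C}^1_c$ in $\mathcal{C}_0$ in uniform norm together with the weak continuity of $f_i$.
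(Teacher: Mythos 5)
Your proposal is correct in substance and reaches the same object as the paper, but by a different construction. The paper does not build the semigroup by Picard iteration on the Duhamel formula; instead it verifies that the operator $T[\vp]=a\cdot\partial_v\vp+\Ll^*[\vp]$ on $\D(T)$ is a Markov pregenerator in Liggett's sense (density of the domain, the sequence $\vp_n\to 1$ of Lemma \ref{LEMMETest_Funct}, the minimum principle for $\vp-\lambda T[\vp]$), establishes the range condition for the closure $\overline{T}$ by integrating $\vp-\lambda T[\vp]=\psi$ along characteristics and inverting $\mathrm{id}-\lambda A$ via a Neumann series (Propositions \ref{PROPOGeneralizdEquatSurje} and \ref{PROPOBoundednesOpera__A__}), and then invokes the Hille--Yosida correspondence to obtain the Markov semigroup $S(t)$. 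Your contraction estimate $\int_0^t e^{-\nu(t-s)}\nu\,\dd s=1-e^{-\nu t}<1$ is exactly the paper's bound $\lambda\|A\|\leq\lambda C_b/(1+\lambda C_b)<1$, transposed from the resolvent to the time-dependent setting; so the analytic core is shared, and the difference is whether one constructs the semigroup directly (your route, more elementary and self-contained) or the resolvent (the paper's route, which buys for free the Markov property, i.e.\ that $S^*(t)$ preserves probability measures, via the abstract theory). The duality definition of $f(t)$ and the uniqueness argument via backward test functions are the same in both.

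Two remarks on details. First, your direct construction should still record the semigroup identity $U(t+s)=U(t)U(s)$ (it follows from uniqueness of the Duhamel fixed point) and the fact that $\D(T)$ lies in the domain of the generator of $U$, which is what lets you differentiate $\langle U(t)\vp(t),f_0\rangle$ and commute $T$ with $U(t)$ to obtain \eqref{eq:weakf}. Second, you correctly identify the regularity obstruction in the uniqueness step --- $U(T-t)\psi$ need not be $\mathcal{C}^1$ in $v$ since $b$ is only $L^\infty$ and $N=v/|v|$ is singular at the origin --- and the paper indeed glosses over this point. However, your proposed remedy is not quite the right one: mollifying $f_0$ is irrelevant (uniqueness tests a given weak solution $g$, it does not reconstruct it), and mollifying $\psi$ does not regularize $U(T-t)\psi$, because the loss of regularity comes from the gain operator, not from the terminal datum. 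What is actually needed is to approximate $U(T-t)\psi$ in the graph norm of the generator by admissible test functions, i.e.\ to use that $\D(T)$ is a core for $\overline{T}$; this is available in both approaches (in yours, from the fact that $(\mathrm{id}-\lambda A)^{-1}$ maps $\D(T)$-data to $\D(T)$-solutions), so the gap is fixable, but as stated your regularization step would not close it.
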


\begin{remark}
Observe that the case $r = 1$ is covered by Theorem \ref{thm:wp1}, which corresponds to the elastic case.
\end{remark}

\noindent
The main purpose of this paper is to prove the existence, uniqueness and stability of a stationary solution to \eqref{eq:LinInBolstrong_Cauchy}. To this end we introduce the following concept of \emph{stationary solution} to \eqref{eq:LinInBolstrong_Cauchy}.

\begin{defn}[Stationary solution to \eqref{eq:LinInBolstrong_Cauchy}] \label{def:stationarySol}
    Let $d = 2$ or $3$, and let $a \in \mathbb{R}^d$. Assume that the restitution coefficient $r$ satisfies $0 < r \leq 1$. Let $f_0\in \mathscr{M}_+(\mathbb{R}^d)$ and let $b(\vert N \cdot \omega\vert )$ satisfy Assumption \ref{ass:kernelB}. A measure $f \in \M$ is a stationary solution to \eqref{eq:LinInBolstrong_Cauchy} if  for every $\vp \in \mathcal{C}^1_0(\R)$ one has 
    \begin{equation}
        \int_{\R}f(\dd v)\left[a \cdot \partial_v \vp + \Ll^*(\vp)\right](v)=0
    \end{equation}
    where $\Ll^*$ is as in \eqref{eq:adjCollOp}. 
\end{defn}

\noindent
We have the following theorem whose proof can be found in Section \ref{ssec:uniqueness}.

\begin{thm}[Existence and uniqueness of steady states]   \label{thm:steadystate}
Let $d = 2$ or $3$, and let $a \in \mathbb{R}^d$. Assume that the restitution coefficient $r$ satisfies $0 < r < 1$ (we assume in particular that $r \neq 1$). Let $b(\vert N \cdot \omega\vert )$ satisfy Assumption \ref{ass:kernelB}. Then, there exists a unique stationary solution $f_\infty\in \mathscr{M}_+(\R)$ in the sense of Definition \ref{def:stationarySol}.
Moreover,  $\{|v|,|v|^2\}f_{\infty} \in \M$. 
\end{thm}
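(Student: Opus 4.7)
My approach combines moment identities to pin down the low-order moments of any stationary solution, existence via a Krylov--Bogolyubov averaging, and uniqueness through a Bobylev-type Fourier seminorm.

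\textbf{Step 1 (Moment identities).} Testing the weak formulation \eqref{eq:weakf} against $\vp(v)=v_i$ and $\vp(v)=v_iv_j$ (after truncating to stay in $\mathcal{C}_0^1$ and passing to the limit), a direct computation in spherical coordinates adapted to $N=v/|v|$ yields $\Ll^*(v_i)(v)=-(1+r)C_b\,v_i$ and $\Ll^*(|v|^2)(v)=-(1-r^2)C_b\,|v|^2$, where
\begin{equation*}
C_b:=\int_{\mathbb{S}^{d-1}}b(|N\cdot\omega|)(N\cdot\omega)^2\dd\omega>0.
\end{equation*}
For weak solutions with finite second moment this gives the closed ODE system
\begin{equation*}
\dot M_0=0,\quad \dot M_1=aM_0-(1+r)C_bM_1,\quad \dot M_2=2a\cdot M_1-(1-r^2)C_bM_2,
\end{equation*}
which is dissipative since $0<r<1$. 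Consequently any stationary solution with finite second moment carries the prescribed $M_1^\infty=aM_0/((1+r)C_b)$ and $M_2^\infty=2|a|^2M_0/((1-r)(1+r)^2C_b^2)$. A Lyapunov estimate with $V(v)=|v|^2$, satisfying $\Ll^*V(v)+a\cdot\nabla V(v)\le-\tfrac12(1-r^2)C_b\,|v|^2+O(1)$, applied to a sequence of smooth truncations of $V$, rules out infinite-moment stationary solutions and delivers the $\{|v|,|v|^2\}f_\infty\in\M$ part of the theorem.

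\textbf{Step 2 (Existence via Krylov--Bogolyubov).} Fix $M_0>0$ and choose $f_0\in\M$ with mass $M_0$ and finite second moment; let $f_t:=S(t)f_0$ be the solution from Theorem \ref{thm:wp1}. Step 1 gives $\sup_t M_2(t)<\infty$, and Chebyshev then supplies uniform tightness of the orbit $\{f_t\}$ and of its Ces\`aro averages $\bar f_T:=\tfrac1T\int_0^Tf_t\,\dd t$. Prokhorov's theorem yields a subsequence $T_n\uparrow+\infty$ with $\bar f_{T_n}\to f_\infty$ weakly in $\M$, while tightness forces $f_\infty(\R)=M_0$. Averaging the weak formulation in time against a time-independent $\vp\in\mathcal{C}_0^1(\R)$ (noting that $a\cdot\partial_v\vp+\Ll^*\vp\in\mathcal{C}_0(\R)$, so one can pass to the weak limit) and using that the time-boundary contributions are $O(1/T_n)$, we conclude that $f_\infty$ satisfies Definition \ref{def:stationarySol}.

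\textbf{Step 3 (Uniqueness via Bobylev seminorm).} Let $f_\infty^1,f_\infty^2\in\M$ be two stationary solutions. Mass is conserved along the flow (test $\vp\equiv 1$), and by Step 1 both candidates share the same first and second moments; hence $h:=f_\infty^1-f_\infty^2$ is a signed measure with $\hat h(0)=0$, $\nabla\hat h(0)=0$, $\mathrm{Hess}\,\hat h(0)=0$, so that the Bobylev-type seminorm
\begin{equation*}
\|h\|_*:=\sup_{k\in\R\setminus\{0\}}\frac{|\hat h(k)|}{|k|^2}
\end{equation*}
is finite. The strategy is to run the linear semigroup on $h$ and to establish the strict decay $\|S(t)h\|_*\le e^{-\lambda t}\|h\|_*$ with $\lambda>0$ proportional to $(1-r^2)C_b$. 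The drift acts on the Fourier side as multiplication by the unit-modulus phase $e^{-i(a\cdot k)t}$ and leaves $\|\cdot\|_*$ invariant, while the collision contribution is controlled through the algebraic identity
\begin{equation*}
|k^\dagger|^2=|k|^2-(1-r^2)(k\cdot\omega)^2,\qquad k^\dagger:=k-(1+r)(k\cdot\omega)\omega
\end{equation*}
(the Fourier image of the post-collisional variable $v'$), which gives $|k^\dagger|<|k|$ strictly whenever $\omega\not\perp k$. Stationarity of $h$ then forces $\|h\|_*=0$, hence $\hat h\equiv 0$ and $f_\infty^1=f_\infty^2$ by injectivity of the Fourier transform on finite signed measures.

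\textbf{Main obstacle.} The technical core is the contraction in Step 3. Because the angular kernel $b(|N\cdot\omega|)$ depends on $N=v/|v|$ rather than on $\hat k\cdot\omega$, the Fourier transform of $\Ll$ is not a clean averaging of $\hat f$ on translates $k^\dagger$, unlike in Bobylev's classical identity for non-linear Maxwell molecules. The proof has to perform the $\omega$-integration in spherical coordinates adapted to $N$, observe that the remaining dependence on $v$ enters only through the quantities $k\cdot v$ and $|v|^2|k|^2-(k\cdot v)^2$, and finally exploit the vanishing of the low-order moments of $h$ together with the strict inequality $1-r^2>0$ to extract the contraction factor.
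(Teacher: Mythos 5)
Your existence argument (Step 2) is correct and genuinely different from the paper's: the paper applies Schauder's fixed point theorem to the discrete-time maps $S^*(h)$ on the weak-$*$ compact convex set $\mathscr{U}$ of probability measures with uniformly bounded first and second moments, and then sends $h\to 0$, whereas you take Ces\`aro averages of a single orbit and invoke Prokhorov; both hinge on the same uniform moment bounds (the paper's Propositions \ref{prop:estimatesMacrod=2} and \ref{prop:estimatesMacrod=3}, your Step 1 ODEs), and your route is arguably more economical. Two caveats on Step 1. First, uniqueness can only be meant up to normalization of the mass, since the equation is linear and mass-preserving, so you must fix $M_0=1$. Second, your Lyapunov argument showing that \emph{every} stationary measure automatically satisfies $\{|v|,|v|^2\}f_\infty\in\M$ needs monotone truncations (smoothed versions of $\min(|v|,R)$ and $\min(|v|^2,R^2)$, with an outer cutoff sent to infinity first): with a generic multiplicative cutoff $\chi_R(v)|v|^2$ the collisional boundary terms are of size $R^2 f_\infty(\{|v|\ge R\})$, which is precisely the quantity you are trying to control, and the argument is circular. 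What saves you is the monotonicity $|v'|\le |v|$, which gives the truncated collision term a sign; this must be invoked explicitly. (Also, the finiteness of $\|h\|_*$ only requires $\hat h(0)=0$, $\nabla\hat h(0)=0$ and finite second moments of $|h|$; demanding $\mathrm{Hess}\,\hat h(0)=0$ would force you to close the full second-moment tensor system, not just the trace $M_2$.)

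The genuine gap is in Step 3: the contraction $\|S(t)h\|_*\le e^{-\lambda t}\|h\|_*$ is the entire content of your uniqueness proof, and you assert it while yourself flagging its key ingredient — the Fourier representation of the linear Maxwell collision operator — as an unresolved obstacle. Your candidate variable $k^\dagger=k-(1+r)(k\cdot\omega)\omega$ is not the one that actually appears: since $b(|N\cdot\omega|)$ depends on $v$ through $N$, one cannot integrate in $v$ first and read off an average of $\hat h$ over the $k^\dagger$'s. The paper's Appendix \ref{app:evolFourier} resolves this by passing to the $\sigma$-representation and using the reflection exchanging $v/|v|$ with $\xi/|\xi|$, which yields the closed equation \eqref{EQUATFouriFirstVersi} with $\overline{\xi}=\frac{1-r}{2}\xi+\frac{1+r}{2}|\xi|\sigma$ and $|\overline{\xi}|^2=|\xi|^2\left(\frac{1+r^2}{2}+\frac{1-r^2}{2}\,\frac{\xi}{|\xi|}\cdot\sigma\right)\le|\xi|^2$. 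Once this is available, your contraction is exactly the super-solution/comparison machinery of Section \ref{sec:stability} with $p=2$ (where $\lambda(2)=\frac{1+r^2}{2}<1$), so the plan is viable, but as written the decisive estimate is missing. Note finally that the paper's own uniqueness proof (Theorem \ref{thm:uniqStatSol}) proceeds by an entirely different route, a Neumann series expansion of $f_\infty=S^*(t)f_\infty$ around the drift-plus-loss semigroup, with no moment hypotheses; your Fourier route, once completed, gives uniqueness only within the class of stationary measures to which your Step 1 moment bounds apply.
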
 

\noindent
To prove the existence of a stationary solution we will employ an abstract fixed point argument, specifically the Schauder fixed point theorem. This theorem is a general and flexible tool that can be applied to various kinetic equations. However, it is less effective for establishing the uniqueness and stability of stationary solutions. Therefore, to prove the uniqueness of the steady state, we will utilize a more direct approach through the Neumann series expansion.  
Moreover, we can show that the stationary solution is asymptotically stable, which is the content of the following theorem, whose proof will be presented in Section \ref{sec:stability}.

\begin{thm}[Stability of steady states]\label{thm:stability}
Let $d = 2$ or $3$, and let $a \in \mathbb{R}^d$.  %be a fixed vector. 
We assume that the restitution coefficient $r$ satisfies $0 < r < 1$ (we assume in particular that $r \neq 1$) and that $b(\vert N \cdot \omega\vert )$ satisfy Assumption \ref{ass:kernelB}. Let $f_0(v)\in \mathscr{M}_+(\R)$ such that $|v|f_0 \in \M$. Let $f \in \mathcal{C}\left( [0,+\infty),\M \right)$ be the solution to \eqref{eq:LinInBolstrong_Cauchy} obtained in Theorem \ref{thm:wp1} and $f_\infty \in \M $ be the stationary solution to \eqref{eq:LinInBolstrong_Cauchy} obtained in Theorem \ref{thm:steadystate}. Then:
\begin{align}
f(t,v) \rightarrow f_\infty(v),\quad \text{as}\ \ t\to \infty,
\end{align}
in the weak topology of $\mathscr{M}_+(\R)$. 
\end{thm}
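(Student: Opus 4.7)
The plan is to combine a tightness estimate for the trajectory with the uniqueness of the stationary solution from Theorem \ref{thm:steadystate}: first I would establish Cesàro convergence of the time averages, then upgrade it to genuine pointwise-in-time weak convergence.

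Stage 1 (tightness). I would apply the weak formulation of Definition \ref{def:weakSol} to the test function $\vp(v) = |v|$ (justified by truncation, since $|v| \notin \mathcal C_0^1(\R)$), and use $|v'|^2 = |v|^2\bigl(1-(1-r^2)(\hat v\cdot\omega)^2\bigr)$ together with the rotation invariance of the angular integral to get $\Ll^*|v| = -\gamma\,|v|$ with
\begin{equation*}
\gamma := \int_{\mathbb{S}^{d-1}} b(|e_1 \cdot \omega|)\bigl(1 - \sqrt{1-(1-r^2)(e_1\cdot\omega)^2}\bigr)\dd\omega > 0,
\end{equation*}
strictly positive because $r<1$. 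Combined with $|a\cdot\partial_v|v|| \leq |a|$ and mass conservation $\|f(t)\|_{\mathscr M} = \|f_0\|_{\mathscr M}$, Grönwall's inequality yields $\sup_{t\geq 0}\int|v|f(t,\dd v) < \infty$; Chebyshev then gives uniform tightness of $\{f(t)\}_{t\geq 0}$ in $\M$, hence relative weak compactness via Prokhorov.

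Stage 2 (Cesàro convergence). For $\bar f_T := T^{-1}\int_0^T f(t)\,\dd t$, integrating the weak formulation on $[0,T]$ with a time-independent $\vp \in \mathcal C_0^1(\R)$ and dividing by $T$ yields
\begin{equation*}
\tfrac{1}{T}\Big[\int \vp\,f(T,\dd v) - \int \vp\, f_0(\dd v)\Big] = \int \bar f_T(\dd v)\,\bigl[a\cdot\partial_v \vp + \Ll^*\vp\bigr],
\end{equation*}
whose left-hand side vanishes as $T\to\infty$ since $\int\vp\,f(t,\dd v)$ is uniformly bounded in $t$. By Stage 1 and Prokhorov, every weak cluster point $\nu$ of $\{\bar f_T\}$ satisfies the stationarity equation of Definition \ref{def:stationarySol}, so $\nu = f_\infty$ by Theorem \ref{thm:steadystate}; hence $\bar f_T \rightharpoonup f_\infty$.

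Stage 3 (from Cesàro to pointwise). For any $t_n\to\infty$ with $f(t_n)\rightharpoonup\mu$ (existing by Stage 1), uniform tightness of $\{f(t_n+s)\}_{n,s}$ and equicontinuity of $s\mapsto\int\vp\,f(t_n+s,\dd v)$ (the integrand in the weak formulation is bounded on $\mathcal C_c^\infty$) allow by Arzelà–Ascoli to extract a subsequence along which $f(t_n+s)\rightharpoonup\mu_s$ for every $s\geq 0$, where $s\mapsto\mu_s$ is a weak solution with $\mu_0=\mu$. Applying Stage 2 to this orbit gives $T^{-1}\int_0^T\mu_s\,\dd s\rightharpoonup f_\infty$. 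On the other hand, $t\mapsto\|f(t)-f_\infty\|_{\mathscr M}$ is non-increasing (the semigroup $S(t)$ being a total-variation contraction on signed measures, as it is mass- and positivity-preserving), with a limit $\alpha\geq 0$; lower semicontinuity of the total variation under tight weak convergence yields $\|\mu_s-f_\infty\|_{\mathscr M}=\alpha$ for all $s\geq 0$. If $\alpha>0$, the signed measure $\nu_s:=\mu_s-f_\infty$ would keep $\|\nu_s\|_{\mathscr M}$ constant along the evolution, forcing the positive and negative parts $\nu_s^\pm$ to remain mutually singular for all $s\geq 0$. This rigidity is incompatible with the smoothing effect of the gain operator (converting singular measures into absolutely continuous ones after one collision) and the irreducibility of the underlying Markov process; therefore $\alpha=0$ and $f(t)\rightharpoonup f_\infty$.

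The main obstacle is precisely this last rigidity step: ruling out that the positive and negative parts of $\nu_s$ remain mutually singular along the evolution. Making this quantitative requires either a careful analysis of the regularizing properties of the gain term in $\Ll$, or equivalently a Doeblin-type minorization providing a strict total-variation contraction of $S(t)$ after one collision time, exploiting the randomization by the angular integral and the dispersion by the drift $a$.
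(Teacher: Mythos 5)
Your Stages 1 and 2 are sound: the computation $\Ll^*\vert v\vert = -\gamma \vert v \vert$ with $\gamma>0$ (using $\vert v'\vert = \vert v\vert\sqrt{1-(1-r^2)(\hat v\cdot\omega)^2}$) does give a uniform-in-time bound on the first moment, hence tightness, and the Ces\`aro argument correctly identifies every weak cluster point of the time averages with $f_\infty$ via the uniqueness in Theorem \ref{thm:steadystate}. But the proof does not close: Stage 3 is where the theorem actually lives, and you acknowledge yourself that the decisive step --- ruling out $\alpha>0$ --- is left as an ``obstacle.'' Two specific problems there. First, the identity $\|\mu_s-f_\infty\|_{\mathscr M}=\alpha$ does not follow from lower semicontinuity of the total variation under weak convergence; that only gives $\|\mu_s-f_\infty\|_{\mathscr M}\leq\alpha$, and the total variation norm is not weakly continuous, so the La Salle-type invariance of the level set is not established. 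Second, the claimed regularization is false as stated: the gain operator maps a Dirac mass at $v\neq 0$ to a measure supported on the $(d-1)$-dimensional surface $\{v-(1+r)(v\cdot\omega)\omega\,:\,\omega\in\mathbb{S}^{d-1}\}$, which is still singular with respect to Lebesgue measure, so ``one collision'' does not destroy mutual singularity of $\nu_s^{\pm}$; a genuine Doeblin minorization for this degenerate kinetic operator would require iterating collisions and the drift and is nontrivial to prove. Without one of these ingredients the argument only yields Ces\`aro convergence, which is strictly weaker than the asserted pointwise-in-time convergence.

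For contrast, the paper avoids this difficulty entirely by working with characteristic functions: it shows that $\vert\mathcal{F}_0(\xi)-\mathcal{F}_\infty(\xi)\vert\leq C\vert\xi\vert$ when both measures have finite first moments, proves a comparison principle for the integrated Fourier equation against explicit super-solutions $u_p(t,\xi)=e^{(\lambda(p)-1)t}\vert\xi\vert^p$, and checks $\lambda(1)<1$ precisely because $r<1$. This yields $\vert\mathcal{F}(t,\xi)-\mathcal{F}_\infty(\xi)\vert\leq C e^{(\lambda(1)-1)t}\vert\xi\vert$, i.e.\ an exponential, quantitative contraction in the Fourier--based metric, from which weak convergence follows by L\'evy's continuity theorem. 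Your hypotheses and a priori estimates are compatible with that route, so the most economical repair is to replace Stage 3 by the Bobylev--Fourier contraction rather than to pursue the total-variation rigidity argument.
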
 

\noindent
The main tool that we will use to prove Theorem \ref{thm:stability} is the well developed machinery available for the study of the Boltzmann equations in the case of Maxwell molecules by means of the Fourier transform method that was introduced by Bobylev in \cite{Bo75}. In the end, it is possible to prove the local uniform convergence of the Fourier transforms of the solutions towards the Fourier transform of the steady state. As a consequence, general results of probability theory (e.g., \cite{Fe2}) allow to state the weak convergence of the measure-valued solutions of \eqref{eq:LinInBolstrong_Cauchy}.

\begin{remark}
We notice that the approach used to obtain Theorem \ref{thm:stability} allows also to establish the well-posedness of the inelastic linear Boltzmann equation \eqref{eq:LinInBolstrong_Cauchy}, under an additional assumption on the first order moment. Nevertheless, the stability of the steady state is obtained under weaker assumptions as in Theorem \ref{thm:steadystate}. Therefore, Theorem \ref{thm:stability} establishes that if another steady state of \eqref{eq:LinInBolstrong_Cauchy} exists, then its first order moment cannot be finite. For this reason, we separate and rely on these two approaches.\\
Moreover, it might be that the semigroup approach can also be fruitfully applied to investigate the case of more general collision kernels.
\end{remark}

\noindent
Furthermore, we will study the system of moments associated to the rehomogeneized (nonlinear) equation \eqref{EQUATIntroLineaInelaBoltzRehom} for Maxwell collision kernel, which we recall here:
\begin{align}
\label{EQUATRepetEquatRehom}
\partial_t f(t,v) &+ a \cdot \partial_v f(t,v)  \nonumber\\
&=
T^\mu(t)\int_{\mathbb{S}^{d-1}} \frac{1}{r} b \left( \left\vert \hspace{0.5mm}'\hspace{-0.75mm} N \cdot \omega \right\vert \right) f(t,'\hspace{-1mm}v) \dd \omega - T^\mu(t)\left( \int_{\mathbb{S}^{d-1}} b \left( \left\vert N \cdot \omega \right\vert \right) \dd \omega \right) f(t,v) 
\end{align}
with $T(t) = \frac{1}{2} \int_{\mathbb{R}^d} \vert v \vert^2 f(t,v) \dd v $ and $\mu \geq 0.$ 
More precisely, we define the moments $M_0(t), \, M_1(t),\, M_2(t)$ associated to a solution $f$ of \eqref{EQUATRepetEquatRehom} as
\begin{align}
    M_0(t) & = \int_{\R} f(t,\dd v); \label{eq:M_0}\\
    M_1(t) & =\int_{\R} vf(t,\dd v); \label{eq:M_1}\\
    M_2(t) & =\int_{\R} |v|^2f(t,\dd v) \label{eq:M_2}.
\end{align}

\noindent
In particular, assuming $\mu$ to be non-negative, we can describe completely the long-time behaviour of the three first moments of the solution of the rehomogeneized inelastic Boltzmann equation \eqref{EQUATRepetEquatRehom}, assuming that such a solution exists, as well as its first moments. This is the content of the following proposition whose proof will be provided in Section \ref{sec:Generalmoments}. 

\begin{prop}[Long time-behaviour of the moments]
Let $d = 2$ or $3$, and let $a \in \mathbb{R}^d$. Assume that the restitution coefficient $r$ satisfies $0 < r < 1$ (we assume in particular that $r \neq 1$). Let $\mu \geq 0$ and let us assume that $f \in \mathcal{C}([0,+\infty[,\M)$ is a solution of \eqref{EQUATIntroLineaInelaBoltzRehom} in the sense of Definition \ref{def:weakSol} such that $\{|v|,|v|^2\}f(t) \in \M$ for any $t \geq 0$. Then:
\begin{align}
\frac{\dd}{\dd t} M_0(t) = 0, \hspace{3mm} \forall t \geq 0,
\end{align}
\begin{align}
M_1(t) \rightarrow M_{1,\infty}, \hspace{3mm} M_2(t) \geq 0 \hspace{3mm} \forall t \geq 0, \hspace{3mm} \text{and} \hspace{3mm} M_2(t) \rightarrow M_{2,\infty},
\end{align}
as $t\rightarrow +\infty$, with $M_{1,\infty} \in \mathbb{R}^d$ and $M_{2,\infty} \geq 0$, where  $M_0(t),\, M_1(t),\, M_2(t)$ have been defined in \eqref{eq:M_0}, \eqref{eq:M_1}, \eqref{eq:M_2} respectively.
\end{prop}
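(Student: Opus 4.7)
The plan is to reduce the proposition to an ODE analysis by first deriving a closed system for $M_0, M_1, M_2$ and then studying its long-time behavior. I would insert into the weak formulation of \eqref{EQUATRepetEquatRehom} the test functions $1$, $v_i$ ($i=1,\ldots,d$), and $|v|^2$; since these do not lie in $\mathcal{C}^1_0(\mathbb{R}^d)$, I multiply each by a smooth cutoff $\chi(v/R)$, pass to the limit $R\to+\infty$, and use the hypothesis $\{|v|,|v|^2\}f(t) \in \mathscr{M}_+(\mathbb{R}^d)$ to provide dominating functions. The algebraic content is the rotational symmetry of $b(|N\cdot\omega|)$ around $N$, which gives
\begin{align*}
\int_{\mathbb{S}^{d-1}} b(|N\cdot \omega|)(v\cdot\omega)\omega\,\dd\omega = \alpha\, v, \qquad \int_{\mathbb{S}^{d-1}} b(|N\cdot \omega|)(v\cdot\omega)^2\,\dd\omega = \alpha |v|^2,
\end{align*}
where $\alpha := \int_{\mathbb{S}^{d-1}} b(|e\cdot\omega|)(e\cdot\omega)^2 \,\dd\omega > 0$ is independent of the unit vector $e$. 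Combined with $v'-v = -(1+r)(v\cdot\omega)\omega$ and $|v'|^2-|v|^2 = -(1-r^2)(v\cdot\omega)^2$, this yields the closed system
\begin{align*}
\dot M_0 = 0, \qquad \dot M_1 = aM_0 - c_1 T^\mu M_1, \qquad \dot M_2 = 2a\cdot M_1 - c_2 T^\mu M_2,
\end{align*}
with $c_1 = (1+r)\alpha$, $c_2 = (1-r^2)\alpha$ (both positive, since $r<1$), and $T=M_2/2$.

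The conservation of $M_0$ and the positivity $M_2 \geq 0$ are now immediate. To address convergence I would first dispatch the simple case $a=0$: the $M_2$ equation is autonomous with non-positive right-hand side, so $M_2$ is monotone and converges to some $M_{2,\infty}\geq 0$; an elementary integration (exponential decay when $\mu=0$, algebraic decay when $\mu>0$) forces $M_{2,\infty}=0$ and $\int_0^\infty T^\mu \,\dd t = +\infty$, which via $M_1(t) = M_1(0)\exp\left(-c_1 \int_0^t T^\mu\,\dd s\right)$ implies $M_{1,\infty}=0$.

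For $a\neq 0$ I decompose $M_1 = p\, a/|a| + M_1^\perp$. The orthogonal part satisfies the decoupled equation $\dot M_1^\perp = -c_1 T^\mu M_1^\perp$ and tends to zero once $\int_0^\infty T^\mu\,\dd s = +\infty$ is known, a fact I would establish together with the boundedness of $(p,q) := (a\cdot M_1, M_2)$ by exhibiting a trapping rectangle in the first quadrant, using the signs of $\dot p$ and $\dot q$ on its sides. The reduced planar autonomous system
\begin{align*}
\dot p = |a|^2 M_0 - c_1 (q/2)^\mu p, \qquad \dot q = 2p - c_2 (q/2)^\mu q
\end{align*}
admits a unique equilibrium $(p^*,q^*) \in (0,+\infty)^2$, characterized by $(q^*/2)^{2\mu+1} = |a|^2 M_0/(c_1 c_2)$ and $p^* = |a|^2 M_0/(c_1 (q^*/2)^\mu)$. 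A direct computation yields
\begin{align*}
\mathrm{div}\, F = -\big[c_1 + c_2(1+\mu)\big](q/2)^\mu < 0 \qquad \text{on } \{q>0\},
\end{align*}
so Bendixson--Dulac excludes closed orbits in the trapping region, and Poincaré--Bendixson then forces the $\omega$-limit set of every trajectory to equal the singleton $\{(p^*,q^*)\}$, which is the desired convergence.

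The main obstacle is the nonlinear coupling through $T^\mu$ when $\mu>0$: there is no obvious Lyapunov function, and the $\mu$-dependent damping prevents an explicit resolution. The planar reduction followed by a divergence-based ruling-out of periodic orbits seems the cleanest route; some care is required near the boundary $q=0$ to ensure that the trajectory stays strictly inside the first quadrant for large $t$, but the sign of $\dot q$ there (namely $\dot q \approx 2p > 0$ once $p$ has been driven positive by the forcing $|a|^2 M_0$) together with the underlying positivity of the measure $f$ guarantees this.
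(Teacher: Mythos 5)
Your derivation of the closed moment system agrees with the paper's Proposition \ref{prop:momentsRenorm} (your $\alpha$ is the paper's $C_d$, and your $(p,q)=(a\cdot M_1,M_2)$ is, up to the normalization $T=M_2/2$, the paper's reduced pair $(x,y)$). Where you genuinely diverge is in the global convergence argument for the planar system. The paper works entirely by hand: it proves that the Cauchy--Schwarz region $\mathcal{R}=\{|a|^2M_0\,y\geq x^2\}$ and then the first quadrant are forward invariant and attracting, cuts the quadrant into four regions by the two isoclines, and shows that any trajectory that does not eventually settle in one region must cycle $R_1\to R_2\to R_3\to R_4\to R_1\to\cdots$ with a strictly increasing, bounded sequence of re-entry abscissas converging to $x_\infty$ (a quantitative ``spiralling-in'' computation using the explicit powers of $b/c$). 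Your route --- negative divergence, Bendixson--Dulac to exclude closed orbits, then Poincar\'e--Bendixson --- replaces that whole computation by a soft argument and is, in my view, cleaner; it also generalizes more readily if the isoclines were less explicit. You also treat a point the paper leaves implicit: convergence of the full vector $M_1$, not merely of $a\cdot M_1$, via the decoupled equation for $M_1^\perp$; this is a genuine improvement in completeness. Two things you should tighten. First, for $\mu>0$ a literal rectangle $\{0\leq p\leq P,\ 0\leq q\leq Q\}$ is \emph{not} forward invariant: on the side $p=P$ near $q=0$ the damping $(q/2)^\mu$ vanishes and $\dot p\approx |a|^2M_0>0$ no matter how large $P$ is. You must intersect with the invariant region $\mathcal{R}$ (where $p=P$ forces $q\gtrsim P^2$, hence $\dot p<0$ for $P$ large), or bound the trajectory using the isoclines as the paper does; the positivity of $f$ that you invoke is exactly what furnishes $\mathcal{R}$, so the fix is available to you, but it must be made explicit. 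Second, Poincar\'e--Bendixson leaves open a third possibility for the $\omega$-limit set besides an equilibrium and a periodic orbit, namely a homoclinic loop at the unique equilibrium; the same Green's-theorem/negative-divergence computation excludes it, but you should say so rather than pass directly from ``no closed orbits'' to ``the $\omega$-limit set is the equilibrium''.
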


\section{Well-posedness results}\label{sec:WP}
To prove Theorem \ref{thm:wp1} we work with the adjoint equation of \eqref{eq:LinInBolstrong_Cauchy}, relying on the duality provided by the Riesz-Markov-Kakutani theorem. We notice that for cut-off kernels it is possible to decompose $\Ll^*$ as
\begin{equation}\label{eq:decAdjL}
\Ll^*(\vp)=\int_{\mathbb{S}^{d-1}}b \left(|N \cdot \omega|\right)\vp(v')\dd \omega-C_b \vp
\end{equation} 
with $C_b= \int_{\mathbb{S}^{d-1}} b(|N \cdot \omega|) \dd \omega.$ From the weak formulation \eqref{eq:weakf} we deduce the \emph{backward in time} adjoint of \eqref{eq:LinInBolstrong_Cauchy}, which reads  
\begin{equation}\label{eq:adjEq}
	-\partial_t  {\vp}-a \cdot \partial_v \vp = \Ll^*({\vp}),
 \end{equation}
 with final condition
 \begin{equation}\label{eq:adjEqT}
 \vp(T,v)=\vp_T(v), \quad \vp_T \in \mathcal{C}^{\infty}_c(\R), \, T>0. 
\end{equation} 
To obtain the \emph{forward in time} formulation, we consider $t \mapsto T-t$, with $t \in [0,T]$, with a slight abuse of notation. This gives $\partial_t\vp-a \cdot\partial_{v}\vp-\Ll^*(\vp)=0$ and the \emph{forward in time} adjoint  of \eqref{eq:LinInBolstrong_Cauchy} then reads
   \begin{equation}\label{eq:adjCauchy2}
	\begin{cases}
		\partial_t\vp-a \cdot \partial_v\vp = \Ll^*(\vp), & \\
		\vp(0,v)=\vp_0(v) \; 
	\end{cases}
\end{equation}
with $\vp_0 \in \mathcal{C}_c^\infty(\R)$.

\subsection{Markov Generator and Markov Semigroup} 

In order to prove the existence and uniqueness of a solution to the Cauchy problem \eqref{eq:LinInBolstrong_Cauchy}, we will construct such a solution relying on the theory of semigroups. More precisely, we will consider \eqref{eq:adjCauchy2} solved by the test function $\vp$, which is the adjoint equation of \eqref{eq:LinInBolstrong_Cauchy}, and which has the form:
\begin{align}
\partial_t \vp = T[\vp],
\end{align}
where $T$ is a linear operator, and we will show that the closure of $T$ is a Markov generator. Therefore, as a consequence of the Hille-Yosida theorem, we will deduce that $T$ generates a semigroup of operators $\left(S(t)\right)_{t\geq 0}$ such that the solution of \eqref{eq:LinInBolstrong_Cauchy} exists, is unique, and is of the form
\begin{align}
\vp(t) = S(t)[\vp_0],
\end{align}
with $\vp_0 = \vp(0)$.\\
\noindent
First, we recall the definition of a Markov generator, which is a particular type of linear mappings acting on the space $\mathcal{C}_0(X)$ of continuous functions vanishing at infinity, defined on a metric space $X$ which is locally compact. This definition, as well as a general introduction to the theory of Markov generators and Markov semigroups on $\mathcal{C}_0$ functions defined on $X$ locally compact, can be found \cite{Liggett} (in particular, see Section 3). In the case of the present article, we will consider $X = \mathbb{R}^d$. It is worth to point out that the case of $X$ locally compact requires to develop a more sophisticated theory than in case of $X$ compact.
\begin{defn}
\label{def:Gen} 
    Let $X$ be a locally compact metric space. A linear operator $T: \D \mathscr(T) \subseteq \mathcal{C}_0(X) \rightarrow \mathcal{C}_0(X)$, where $\D \mathscr(T)$ denotes the domain of $T$, is called a Markov generator if 
    \begin{enumerate}
        \item $\mathscr{D}(T)$ is dense in $\mathcal{C}_0(X)$,
        \item there exists $\lambda_0 > 0$ such that for all $\lambda \in\ ]0,\lambda_0]$, there exists a sequence $\left(\vp_n(\lambda)\right)_n \subseteq \D(T)$ that satisfies, denoting by $\psi_n=\psi_n(\lambda)$ the sequence $\psi_n = \vp_n-\lambda T[\vp_n]$:
        \begin{equation}
        \sup_{n\in \mathbb{N}}\|\psi_n\|_{\infty} < +\infty \hspace{5mm} \text{and} \hspace{5mm} \vp_n, \psi_n\ \text{converge to}\ 1\ \text{pointwise as}\ n \rightarrow+ \infty,
        \end{equation}
        \item for any $\vp \in \D(T)$ and $\lambda \geq 0$, denoting $\psi$ by $\psi=\vp-\lambda T[\vp]$, we have
        \begin{equation}
            \inf_{x \in X} \vp(x) \geq \inf_{x \in X} \psi(x),
        \end{equation}
        \item there exists $\overline{\lambda} > 0$ such that the range of $\text{id} - \overline{\lambda}T$ (where $\text{id}$ denotes the identity mapping) is equal to $\mathcal{C}_0(X)$, that is, $\text{id} - \overline{\lambda}T$ is surjective,
    \end{enumerate}
\end{defn}

\begin{remark}
If there exists a positive $\bar \lambda > 0$ as in the fourth point of the previous definition, then  the image of $\text{id} - \lambda T$ is equal to $\mathcal{C}_0(\Omega)$ for every $\lambda>0$.
\end{remark}

\noindent
We now  turn to the definition of a Markov semigroup.

\begin{defn}\label{def:markovSem}
    Let $X$ be a locally compact metric space. A Markov semigroup $\left(S(t)\right)_{t \geq 0}$ is a family of linear operators $S(t):\mathcal{C}_0(X) \rightarrow \mathcal{C}_0(X)$ such that:
    \begin{enumerate}
        \item $S(0)=\text{id}$,
        \item $S(t+s)=S(t)S(s)$ for every $t,s \geq 0$,
        \item $\lim_{t \rightarrow 0^+} \|S(t)[\vp]-\vp\|_{\infty}=0$ for every $\vp \in \mathcal{C}_0(\Omega)$,
        \item $S(t)\vp \geq 0$ whenever $\vp \geq 0$,
        \item there exists a sequence $(\vp_n)_n \subseteq \mathcal{C}_0(X)$ with $\sup_{n \in \mathbb{N}}\|\vp_n\|<+\infty$ such that such $S(t)[\vp_n]\rightarrow 1$ pointwise as $n \rightarrow+ \infty$ for every $t \geq 0$.
    \end{enumerate}
\end{defn}
\smallskip

\noindent
We will rely on the following classical result.
\begin{thm}[Hille-Yosida]\label{thm:HY}
    There is a one-to-one correspondence between Markov generators and Markov semigroups, namely every Markov generator is the generator (in the sense of (3.16) in Theorem 3.16 of \cite{Liggett}) of a Markov semigroup on $\mathcal{C}_0(\R)$.
\end{thm}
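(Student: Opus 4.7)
The plan is to follow the classical Yosida approximation scheme, adapted to the Markov setting on the locally compact space $X$, and to establish both directions of the correspondence.

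For the forward implication, starting from a Markov generator $T$, I would first use the minimum principle (condition (3) of Definition \ref{def:Gen}), applied to both $\psi$ and $-\psi$, to deduce the contractive estimate $\|\vp\|_\infty \leq \|\psi\|_\infty$ whenever $\psi = \vp - \lambda T[\vp]$ with $\lambda > 0$. Combined with the surjectivity condition (4), this shows that $R_{\overline\lambda} := (\mathrm{id} - \overline\lambda T)^{-1}$ exists as a positivity-preserving contraction on $\mathcal{C}_0(X)$; a Neumann series argument around $\overline{\lambda}$ then extends this to a full resolvent family $\{R_\lambda\}_{\lambda > 0}$ satisfying the resolvent identity. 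Next, I would define the Yosida approximants $T_\lambda := (R_\lambda - \mathrm{id})/\lambda$, which are bounded operators on $\mathcal{C}_0(X)$ and satisfy $T_\lambda \vp \to T\vp$ uniformly for every $\vp \in \mathscr{D}(T)$. Their exponentials $S_\lambda(t) := e^{tT_\lambda} = e^{-t/\lambda} e^{tR_\lambda/\lambda}$ are positivity-preserving contractions, and since $T_\lambda$ and $T_\mu$ commute, one obtains the Trotter-type estimate $\|S_\lambda(t)\vp - S_\mu(t)\vp\|_\infty \leq t\|T_\lambda \vp - T_\mu \vp\|_\infty$ for $\vp \in \mathscr{D}(T)$, so that $(S_\lambda(t)\vp)_\lambda$ is Cauchy as $\lambda \to 0^+$. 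Density of $\mathscr{D}(T)$ (condition (1)) extends the limit to a well-defined contraction $S(t)$ on all of $\mathcal{C}_0(X)$; the semigroup law, positivity, and strong continuity at zero pass to the limit directly, while the approximate-unit property (5) of Definition \ref{def:markovSem} is obtained by applying $S(t)$ to the sequence $\vp_n$ supplied by condition (2) and using dominated convergence.

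For the reverse implication, I would define $T\vp := \lim_{t \to 0^+}(S(t)\vp - \vp)/t$ on its maximal domain and verify conditions (1)--(4) of Definition \ref{def:Gen}. Density follows from the observation that $\vp_\varepsilon := \varepsilon^{-1}\int_0^\varepsilon S(t)\vp\,\dd t \in \mathscr{D}(T)$ and $\vp_\varepsilon \to \vp$ uniformly as $\varepsilon \to 0^+$. Condition (2) is inherited directly from condition (5) of the semigroup. The minimum principle and the surjectivity of $\mathrm{id} - \lambda T$ both follow from the explicit resolvent formula $(\mathrm{id} - \lambda T)^{-1}\psi = \int_0^\infty \lambda^{-1} e^{-t/\lambda} S(t)\psi\,\dd t$, combined with the positivity of $S(t)$. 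Uniqueness of the correspondence is then obtained from the classical argument: if $T$ generates two Markov semigroups $S,\tilde S$, then for $\vp \in \mathscr{D}(T)$ the map $t \mapsto S(t)\tilde S(T-t)\vp$ has vanishing derivative, and equality extends to $\mathcal{C}_0(X)$ by density.

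The main obstacle is the Cauchy estimate for the Yosida approximants combined with the transfer of the Markov (conservation-of-unit-mass) property. These steps are considerably more delicate in the locally compact case than in the compact one, since the constant function $1$ does not belong to $\mathcal{C}_0(X)$ and one cannot simply normalise by $\|S(t)[1]\|_\infty$. This is precisely where the somewhat technical conditions (2) of Definition \ref{def:Gen} and (5) of Definition \ref{def:markovSem} intervene: they encode a quantitative "no mass escape at infinity" that replaces the relation $T[1]=0$ available in the compact framework, and managing their propagation through the resolvent and the exponential construction is the delicate core of the argument.
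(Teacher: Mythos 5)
The paper does not prove this statement: it is quoted as a classical result and deferred entirely to Theorem 3.16 of \cite{Liggett}, so there is no in-paper argument to compare against. Your sketch is the standard Yosida-approximation proof of exactly that cited theorem (contractivity of the resolvent from the minimum principle, extension of surjectivity from $\overline{\lambda}$ to all $\lambda>0$, convergence of $e^{tT_\lambda}$, and the converse via the Laplace-transform resolvent formula), and it is correct in outline, including your accurate identification of conditions (2) and (5) as the substitute for $T[1]=0$ in the locally compact setting.
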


\noindent
Now that we have introduced the fundamental tools required for our analysis, we focus on the study of the linear operator associated to the Cauchy problem \eqref{eq:LinInBolstrong_Cauchy}.
\noindent
To this aim, we introduce the linear operator $\left(T,\mathscr{D}(T)\right)$ defined as:
\begin{equation}\label{def:opA}
   T:\D(T) \subseteq \mathcal{C}_0(\R) \rightarrow \mathcal{C}_0(\R), \qquad  T[\vp] :=a \cdot \partial_{v} \vp + \Ll^*[\vp] 
\end{equation}
with domain
\begin{equation}\label{def:DomA}
    \D(T)= \Big\{ \vp \in \mathcal{C}^1(\R) \ /\  \vp, \,
    \partial_v \vp \in \mathcal{C}_0(\R) \Big\}.
\end{equation}

\noindent
We first prove that $T$ maps $\D(T)$ into $\mathcal{C}_0(\mathbb{R}^d)$. 
We observe that the operator $\Ll^*_+$ maps $\mathcal{C}_0(\R)$ into $\mathcal{C}_0(\R)$ into itself. Indeed, since $\vert v' \vert \geq r \vert v \vert$, for any $\vp \in \mathcal{C}_0(\R)$ and $\varepsilon > 0$ there exists $M \geq 0$ such that, for any $v \in \mathbb{R}^d$ with $\vert v \vert \geq M$ and $\omega \in \mathbb{S}^{d-1}$, $
\vert \vp(v') \vert \leq \varepsilon\, .$
This implies, for $\vert v \vert \geq M$, 
\begin{equation}\notag
\left\vert \Ll^*_+[\vp] \right\vert(v) \leq \int_{\mathbb{S}^{d-1}}b(|N \cdot \omega|)\vert \vp(v') \vert \dd \omega  \leq \varepsilon \int_{\mathbb{S}^{d-1}} b(|N \cdot \omega|) \dd \omega,
\end{equation}
and so $\Ll^*_+[\vp] \in \mathcal{C}_0(\mathbb{R}^d)$. Observe that we used the assumption $r \neq 0$ in a crucial manner.
It is also clear that $a \cdot \partial_v \vp \in \mathcal{C}_0(\mathbb{R}^d)$ and $\Ll^*_-[\vp] \in \mathcal{C}_0(\mathbb{R}^d)$ if $\vp \in \D(T)$. As a consequence, $T$ indeed maps $\D(T)$ into $\mathcal{C}(\R)$.\\
\newline
\noindent
In order to prove Theorem \ref{thm:wp1}, we will show that the operator $T$ defined in \eqref{def:opA}-\eqref{def:DomA} generates a Markov semigroup. To do so, we will need the following technical results.

\begin{lem}[Test functions]
\label{LEMMETest_Funct}
For any positive integer $n \geq 1$, the function $\vp_n:\mathbb{R}^d \rightarrow \mathbb{R}$ defined as:
\begin{align}
\label{EQUAT_TestFonctMarkovGene}
\vp_n(v) = \mathds{1}_{\vert v \vert < n} e^{-\frac{n}{n^2-\vert v \vert^2}} \hspace{5mm} \forall v \in \mathbb{R}^d,
\end{align}
satisfies:
\begin{itemize}
\item $\vp_n$ belongs to $\mathcal{C}^1(\mathbb{R}^d)$, and $\vp_n$ and its first derivative $\partial_v \vp_n$ are compactly supported,
\item $\sup_{n\in \mathbb{N}}\|\vp_n\|_{\infty} < +\infty$ and $\sup_{n\in \mathbb{N}}\|\partial_v \vp_n\|_{\infty} < +\infty$,
\item $\vp_n \rightarrow 1$ pointwise, $\partial_v \vp_n \rightarrow 0$ pointwise as $n \rightarrow +\infty$,
\item $\vp_n$ is decreasing, in the sense that for $v,w\in\mathbb{R}^d$ with $\vert v \vert \leq \vert w \vert$, we have:
\begin{align}
0 \leq \vp_n(w) \leq \vp_n(v) \leq 1.
\end{align}
\end{itemize}
\end{lem}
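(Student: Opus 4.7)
The plan is to verify the four bullet points essentially by direct computation, after noting that $\varphi_n$ is radial. Writing $\varphi_n(v) = \psi_n(|v|^2)$ with $\psi_n(s) = \mathds{1}_{s<n^2}\, e^{-n/(n^2-s)}$, everything reduces to controlling a single real variable $s \in [0,n^2)$, together with the factor of $v$ produced by the chain rule.

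First I would handle regularity and support. The support of $\varphi_n$ is contained in the closed ball $\{|v| \le n\}$, so it is compact. On $\{|v|<n\}$ the function is $C^\infty$ as a composition of smooth maps, and on $\{|v|>n\}$ it is identically zero. To glue at $|v|=n$, the key remark is that $e^{-n/(n^2-|v|^2)}$ together with all its partial derivatives tends to $0$ as $|v|\to n^-$, because any fixed negative power of $(n^2-|v|^2)$ is dominated by the exponential (for $s\to 0^+$ one has $s^{-k} e^{-n/s}\to 0$ for any $k$). Writing the gradient explicitly,
\begin{equation*}
\partial_v \varphi_n(v) = -\frac{2n\, v}{(n^2-|v|^2)^2}\, e^{-n/(n^2-|v|^2)} \quad \text{for } |v|<n,
\end{equation*}
and extending by $0$ for $|v|\ge n$, I get a continuous function on $\mathbb{R}^d$, which in particular proves both the $C^1$ regularity and the fact that $\partial_v\varphi_n$ is compactly supported in $\{|v|\le n\}$.

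Next I would establish the uniform bounds and pointwise limits. Since the exponent of the exponential is $\le 0$, clearly $0\le\varphi_n\le 1$, giving $\sup_n\|\varphi_n\|_\infty\le 1$. For the derivative I would perform the change of variable $u = n/(n^2-|v|^2)\in[1/n,+\infty)$, which yields
\begin{equation*}
|\partial_v \varphi_n(v)| \;=\; \frac{2n|v|}{(n^2-|v|^2)^2}\, e^{-u} \;=\; \frac{2|v|}{n}\, u^2\, e^{-u} \;\le\; 2\, u^2 e^{-u},
\end{equation*}
and since $u \mapsto u^2 e^{-u}$ is bounded on $[0,\infty)$ (maximum $4e^{-2}$ at $u=2$), one obtains $\|\partial_v\varphi_n\|_\infty \le 8 e^{-2}$ uniformly in $n$. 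For the pointwise convergence, fix $v\in\mathbb{R}^d$ and take $n>|v|$: then $n/(n^2-|v|^2)\to 0$ so $\varphi_n(v)\to 1$, and moreover $|v|/(n^2-|v|^2)^2\to 0$ so $\partial_v\varphi_n(v)\to 0$.

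Finally the monotonicity property: $\varphi_n(v)=\psi_n(|v|^2)$ depends only on $|v|$, and $\psi_n$ is visibly decreasing on $[0,n^2)$ because $s\mapsto -n/(n^2-s)$ is decreasing. Hence if $|v|\le|w|$ then $\varphi_n(w)\le\varphi_n(v)$, and the upper bound $\varphi_n(v)\le 1$ was already noted. I do not anticipate any real obstacle; the only step that deserves a careful word is the $C^1$ regularity at the boundary sphere $\{|v|=n\}$, which is a standard bump-function argument based on the exponential decay beating the rational blow-up.
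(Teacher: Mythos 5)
Your proof is correct and follows essentially the same route as the paper's: the same explicit gradient formula, the same bound $n|v|\le n^2$ combined with the boundedness of $\rho\mapsto\rho^2 e^{-\rho}$ (your change of variable $u=n/(n^2-|v|^2)$ is just a repackaging of the paper's estimate), and the same arguments for the pointwise limits and monotonicity. The only difference is that you spell out the $C^1$ gluing at $\{|v|=n\}$, which the paper dismisses as a classical exercise.
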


\begin{proof}
The fact that $\vp_n$ is a $\mathcal{C}^1$ function is a classical exercise, and the fact that $\vp_n$ and its derivative $\partial_v \vp_n$ are compactly supported is a consequence of the definition.\\
As for a uniform bound, it is clear that $\sup_{n\in \mathbb{N}}\|\vp_n\|_{\infty} = 1$. Concerning its derivative, since we have:
\begin{align}
\partial_v \vp_n(v) = \mathds{1}_{\vert v \vert < n} \frac{-2nv}{\left( n^2 - \vert v \vert^2 \right)^2} e^{-\frac{n}{n^2 - \vert v \vert^2}},
\end{align}
observing also that $\displaystyle{\sup_{\rho \in \mathbb{R}} \rho^2 e^{-\rho}}$ is a finite, universal constant, we find for any $v \in \mathbb{R}^d$ such that $\vert v \vert < n$:
\begin{align}
\vert \partial_v \vp_n \vert(v) \leq \frac{2n \vert v \vert}{\left( n^2 - \vert v \vert^2 \right)^2} e^{-\frac{n}{n^2 - \vert v \vert^2}} \leq 2 \left(\frac{n}{n^2-\vert v \vert^2}\right)^2 e^{-\frac{n}{n^2-\vert v \vert^2}} \leq 2 \sup_{\rho \in \mathbb{R}} \rho^2 e^{-\rho},
\end{align}
so that we determined an upper bound on $\partial_v \vp_n$, uniform in $v$ and $n$.\\
For $v$ fixed and $n$ large enough, we have $\vert v \vert < n/2$, and for such $v$ and $n$:
\begin{align}
e^{-\frac{n}{n^2-\vert v \vert^2}} \rightarrow 1 \ \text{and}\ \left\vert \frac{-2nv}{\left(n^2 - \vert v \vert^2\right)^2} \right\vert \leq \frac{n^2}{\left(3n^2/4\right)^2} \rightarrow 0 \ \text{as}\ n \rightarrow +\infty.
\end{align}
Therefore, $\vp_n \rightarrow 1$ and $\partial_v \vp_n \rightarrow 0$ pointwise as $n \rightarrow +\infty$.\\
Finally, the monotonocity of $\vp_n$ follows directly from the definition.
\end{proof}

\noindent
Besides, for $\vp \in \D(T)$,  $\psi \in \mathcal{C}_0(\mathbb{R}^d)$ and $\lambda > 0$ given, we will consider the following equation:
$$
\vp - \lambda T[\vp] = \psi,
$$
which, more explicitly,  reads 
$$
\vp + \lambda \Ll_-^*[\vp] - \lambda a \cdot \partial_v \vp - \lambda \Ll_+^*[\vp] = \psi
$$
or, equivalently,  
\begin{align}
\label{EQUATSurjeId-LTDiffeVersi}
\left[ 1 + \lambda C_b \right] \vp(v) - \lambda a \cdot \partial_v \vp(v) - \lambda \Ll_+^*[\vp](v) = \psi(v)
\end{align}
where $C_b = \int_{\mathbb{S}^{d-1}} b( \vert N\cdot\omega \vert) \dd \omega$. Introducing the characteristics $z_v$ satisfying:
\begin{align}
\left\{
\begin{array}{ccc}
\frac{\dd}{\dd t} z_v(t) &=& -\lambda a,\\
z_v(0) &=& v,
\end{array}
\right.
\end{align}
\eqref{EQUATSurjeId-LTDiffeVersi} can be rewritten, for any $\tau \leq s$, as
\begin{align*}
\frac{\dd}{\dd s} \left[ \vp(z_v(s)) e^{\int_\tau^s [1+\lambda C_b ] \dd u} \right] - \lambda \Ll_+^*[\vp](z_v(s)) e^{\int_\tau^s [1+\lambda C_b ] \dd u} = \psi(z_v(s)) e^{\int_\tau^s [1+\lambda C_b ] \dd u}.
\end{align*}
Integrating along the characteristics, that is, integrating the previous equation in $s$ between $t_0 \leq t$, we obtain
\begin{align*}
\vp(v) &= \vp(z_v(t_0-t)) e^{-\int_{t_0}^t [1 + \lambda C_b] \dd u} + \lambda \int_{t_0}^t \Ll_+^*[\vp] (z_v(s-t)) e^{-\int_s^t [1+\lambda C_b ] \dd u} \dd s \nonumber\\
&\hspace{70mm}+ \int_{t_0}^t \psi(z_v(s-t)) e^{-\int_s^t [1+\lambda C_b ] \dd u} \dd s.
\end{align*}
The integration bounds $t_0$ and $t$ being arbitrary, we can choose $t = 0$. We can in addition consider the limit $t_0 \rightarrow -\infty$, which will make vanish the first term of the right hand side, using the fact that the exponential term is smaller than $1$, $z_v(t_0) = v - \lambda t_0 a$ and $\vp \in \mathcal{C}_0(\mathbb{R}^d)$. Therefore, we obtain (at least formally):
\begin{align*}
\vp(v) &= \lambda \int_{-\infty}^0 \Ll_+^*[\vp] (z_v(s)) e^{-\int_s^0 [1+\lambda C_b ] \dd u} \dd s + \int_{-\infty}^0 \psi(z_v(s)) e^{-\int_s^0 [1+\lambda C_b] \dd u} \dd s \nonumber\\
&= \lambda \int_0^{+\infty} \Ll_+^*[\vp] (z_v(-s)) e^{-\int_0^s [1+\lambda C_b] \dd u} \dd s + \int_0^{+\infty} \psi(z_v(-s)) e^{-\int_0^s [1+\lambda C_b] \dd u} \dd s,
\end{align*}
where in the last step we performed the two changes of variables $u \rightarrow -u$ and $s \rightarrow -s$.\\
We introduce then the two operators:
\begin{align}
\label{EQUATDefinOpera__A__}
A[\vp] = \int_0^{+\infty} \Ll_+^*[\vp] (z_v(-s)) e^{-\int_0^s [1+\lambda C_b] \dd u} \dd s
\end{align}
and
\begin{align}
\label{EQUATDefinOpera__B__}
B[\psi] = \int_0^{+\infty} \psi(z_v(-s)) e^{-\int_0^s [1+\lambda C_b] \dd u} \dd s.
\end{align}
We now show that the two operators $A$ and $B$ are well-defined on $\mathcal{C}_0(\mathbb{R}^d)$.

\begin{lem}[Domain and image of the operators $A$ and $B$]
\label{LEMMADomaiImageOpera_A_B_}
Let $\lambda > 0$ be a strictly positive number. The two operators $A$ and $B$, respectively defined in \eqref{EQUATDefinOpera__A__} and \eqref{EQUATDefinOpera__B__}, map $\mathcal{C}_0(\mathbb{R}^d)$ into itself.
\end{lem}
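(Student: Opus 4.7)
The plan is to observe first that because $a$ is constant, the backward characteristic is simply $z_v(-s) = v + \lambda s a$, and because $C_b$ does not depend on $v$, the weight in the integrals reduces to the $v$-independent exponential $e^{-s(1+\lambda C_b)}$. Thus
\begin{align*}
B[\psi](v) = \int_0^{+\infty} \psi(v+\lambda s a) \, e^{-s(1+\lambda C_b)} \, \dd s, \qquad
A[\vp](v) = \int_0^{+\infty} \Ll_+^*[\vp](v+\lambda s a) \, e^{-s(1+\lambda C_b)} \, \dd s,
\end{align*}
and everything reduces to two standard dominated-convergence arguments, one for continuity and one for vanishing at infinity.

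For the operator $B$, I would first note that the integrand is pointwise bounded by the integrable map $s \mapsto \|\psi\|_\infty \, e^{-s(1+\lambda C_b)}$ (here $1+\lambda C_b > 0$ is crucial). Then, given any sequence $v_n \to v$ in $\mathbb{R}^d$, continuity of $\psi$ gives pointwise convergence of the integrands, so dominated convergence yields $B[\psi](v_n) \to B[\psi](v)$, hence $B[\psi] \in \mathcal{C}(\mathbb{R}^d)$. For the decay at infinity, fix $s \geq 0$: since $|v+\lambda s a| \geq |v| - \lambda s |a| \to +\infty$ as $|v|\to+\infty$ and $\psi \in \mathcal{C}_0$, we get $\psi(v+\lambda s a) \to 0$ pointwise in $s$; dominated convergence (with the same dominating function) then gives $B[\psi](v)\to 0$ as $|v|\to+\infty$.

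For the operator $A$, the same scheme applies, provided one knows beforehand that $\Ll_+^*[\vp] \in \mathcal{C}_0(\mathbb{R}^d)$ whenever $\vp \in \mathcal{C}_0(\mathbb{R}^d)$. But this is precisely the observation already made in the text right after the definition of $T$: using the inelastic bound $|v'|\geq r|v|$ with $r>0$, the map $\vp \mapsto \Ll_+^*[\vp]$ preserves $\mathcal{C}_0(\mathbb{R}^d)$, with the uniform estimate $\|\Ll_+^*[\vp]\|_\infty \leq C_b \|\vp\|_\infty$. With this in hand, one can replay the two dominated-convergence steps above, replacing $\psi$ by $\Ll_+^*[\vp]$, to obtain both continuity of $A[\vp]$ and its vanishing at infinity.

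I do not expect any serious obstacle: the only two points one must be careful about are, first, the positivity of the exponential rate $1+\lambda C_b$ (which is ensured by $\lambda>0$ and makes the weight integrable over $s\in[0,+\infty[$, providing the needed dominating function), and second, the preservation of $\mathcal{C}_0$ under $\Ll_+^*$, which uses $r>0$ in an essential way but is already proved above. Once these are in place, the whole lemma is two applications of the dominated convergence theorem.
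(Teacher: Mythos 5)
Your proof is correct and follows essentially the same route as the paper: the explicit form of the characteristics, the integrability of the weight $e^{-s(1+\lambda C_b)}$, and the previously established fact that $\Ll_+^*$ preserves $\mathcal{C}_0(\mathbb{R}^d)$. The only difference is cosmetic: where you invoke dominated convergence for the decay at infinity, the paper splits the time integral at a large $s_0$ and estimates the head and tail separately, which amounts to the same estimate.
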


\begin{proof}
We first show that, for any $\vp \in \mathcal{C}_0(\mathbb{R}^d)$ and $v \in \mathbb{R}^d$, $A[\vp](v)$ and $B[\vp](v)$ are well-defined quantities.\\
We have
\begin{align}
\left\vert B[\vp] \right\vert (v) \leq \| \vp \|_\infty \int_0^{+\infty} e^{-s} \dd s = \| \vp \|_\infty,
\end{align}
proving that $B[\vp]$ is a uniformly bounded function. As for $A$, we have
\begin{align}
\left\vert A[\vp] \right\vert (v) &\leq \int_0^{+\infty} \left( \int_{\mathbb{S}^{d-1}} b\left( \frac{v+\lambda s a}{\vert v + \lambda s a \vert}\cdot\omega\right) \vert \vp\left( (v+\lambda s a)' \right) \vert \dd \omega \right) e^{-\int_0^s [1+\lambda C_b ] \dd u} \dd s \nonumber\\
&\leq \frac{C_b \| \vp \|_\infty}{\lambda C_b} \int_0^{+\infty} [ 1+\lambda C_b ] e^{-\int_0^s [ 1 + \lambda C_b ] \dd u} \dd s \nonumber\\
&\leq - \frac{\| \vp \|_\infty}{\lambda} \left[ e^{-\int_0^s [1 + \lambda C_b ] \dd u} \right]_0^{+\infty} = \frac{\| \vp \|_\infty}{\lambda},
\end{align}
using the fact that $e^{-\int_0^s [1 + \lambda C_b ] \dd u} \leq e^{-s}$, which is a vanishing quantity in the limit $s \rightarrow +\infty$.\\
Therefore, $A[\vp]$ and $B[\vp]$ are two continuous and bounded functions when $\vp \in \mathcal{C}_0(\mathbb{R}^d)$.\\
It remains to show that $A[\vp]$ and $B[\vp]$ are vanishing at infinity. In the case of $A$, for any $\varepsilon > 0$, there exists $s_0 = s_0(\lambda,\| \vp \|_\infty,\varepsilon) > 0$ large enough such that
\begin{align}
\int_{s_0}^{+\infty} \left( \int_{\mathbb{S}^{d-1}} b\left( \frac{v+\lambda s a}{\vert v + \lambda s a \vert}\cdot\omega\right) \vert \vp\left( (v+\lambda s a)' \right) \vert \dd \omega \right)& e^{-\int_0^s [1+\lambda C_b 
] \dd u} \dd s \nonumber\\
\leq \frac{\| \vp \|_\infty}{\lambda}& e^{-\int_0^{s_0} [1 + \lambda C_b 
] \dd u} \leq \frac{\| \vp \|_\infty}{\lambda} e^{-s_0} \leq \varepsilon/2.
\end{align}
In addition, since $\vp \in \mathcal{C}_0(\mathbb{R}^d)$, there exists $M \geq 0$ such that for all $v \in \mathbb{R}^d$ with $\vert v \vert \geq M$, we have for all $s \in [0,s_0]$ and $\omega \in \mathbb{S}^{d-1}$:
\begin{align}
\vert \vp((v+\lambda s a)') \vert \leq \frac{\lambda \varepsilon}{2}\cdotp
\end{align}
Therefore
\begin{align}
\int_0^{s_0} \left( \int_{\mathbb{S}^{d-1}} b\left( \frac{v+\lambda s a}{\vert v + \lambda s a \vert}\cdot\omega\right) \vert \vp\left( (v+\lambda s a)' \right) \vert \dd \omega \right)& e^{-\int_0^s [1+\lambda C_b ] \dd u} \dd s \nonumber\\
&\leq \frac{\varepsilon}{2} \int_0^{s_0} [ 1 + \lambda C_b ] e^{-\int_0^s [ 1 + \lambda C_b ] \dd u} \dd s \nonumber\\
&\leq \frac{\varepsilon}{2}\cdotp
\end{align}
We have shown therefore that for $\vert v \vert$ large enough, we have $\vert A[\vp] \vert(v) \leq \varepsilon$, proving that $A[\vp] \in \mathcal{C}_0(\mathbb{R}^d)$. Similar arguments applied to the case of $B[\vp]$ provides the same conclusion for this operator, which concludes the proof of the lemma.
\end{proof}

\noindent
In the end, we can consider a more general form of the equation $\vp - \lambda T[\vp] = \psi$, as summarized in the following result.

\begin{prop}[Generalized equation {$\vp - \lambda T[\vp] = \psi$}]
\label{PROPOGeneralizdEquatSurje}
Let $\lambda > 0$ be a strictly positive number, and let $\psi \in \mathcal{C}_0(\mathbb{R}^d)$ be any continuous function vanishing at infinity. 
If $\vp\in \mathcal{C}_0(\mathbb{R}^d)$ belongs to $\D(T)$ and solves the equation:
\begin{align}
\label{EQUATSurjeId-LTDiffeVers2}
\vp - \lambda T[\vp] = \psi,
\end{align}
then $\vp$ solves the equation:
\begin{align}
\label{EQUATSurjeId-LTIntegVersi}
\vp = \lambda A[\vp] + B[\psi],
\end{align}
where the mappings $A$ and $B$ are defined in \eqref{EQUATDefinOpera__A__} and \eqref{EQUATDefinOpera__B__} respectively. 
Conversely, if $\vp\in \mathcal{C}_0(\mathbb{R}^d)$ solves the equation \eqref{EQUATSurjeId-LTIntegVersi} and if $\vp \in \D(T)$ or $\psi \in \D(T)$, then $\vp$ solves the equation \eqref{EQUATSurjeId-LTDiffeVers2}.
\end{prop}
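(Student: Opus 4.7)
My plan is to recognize the proposition as a Duhamel-type correspondence between the differential equation $\vp - \lambda T[\vp] = \psi$ and the integral equation $\vp = \lambda A[\vp] + B[\psi]$, both of which arise from integrating the transport equation along the characteristic flow $z_v(s) = v - \lambda s a$ with integrating factor $e^{(1+\lambda C_b) s}$. The forward direction essentially formalizes the heuristic calculation presented in the text just before the statement; the converse is obtained by differentiating the integral equation along the same flow.

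For the first implication, I would start from the rewritten form
\[ (1+\lambda C_b)\vp(v) - \lambda a \cdot \partial_v \vp(v) - \lambda \Ll^*_+[\vp](v) = \psi(v), \]
evaluate at $z_v(s)$, use $\tfrac{\dd}{\dd s}\vp(z_v(s)) = -\lambda a\cdot\partial_v\vp(z_v(s))$, and multiply by $e^{(1+\lambda C_b) s}$ to recognize an exact derivative. Integrating on $[t_0, 0]$ yields the Duhamel identity, and the boundary term $\vp(z_v(t_0))e^{(1+\lambda C_b)t_0}$ vanishes as $t_0 \to -\infty$ by boundedness of $\vp$ (what matters here is the sign $1+\lambda C_b > 0$, not the $\mathcal{C}_0$ decay of $\vp$). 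The change of variables $s \mapsto -s$ then produces exactly $\vp = \lambda A[\vp] + B[\psi]$.

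For the converse, the key tool is a semigroup-like identity along the backward flow $\Phi_t v := z_v(-t) = v + \lambda t a$. Substituting $u = s + t$ yields
\[ A[\vp](\Phi_t v) = \int_t^{+\infty} \Ll^*_+[\vp](z_v(-u))\, e^{-(1+\lambda C_b)(u-t)} \dd u, \]
and similarly for $B[\psi]$. Differentiating at $t = 0$ produces
\[ \left.\tfrac{\dd}{\dd t}\right|_{t=0} A[\vp](\Phi_t v) = -\Ll^*_+[\vp](v) + (1+\lambda C_b)\, A[\vp](v), \]
and the analogous identity for $B[\psi]$. Combining these with the relation $\vp = \lambda A[\vp] + B[\psi]$, and identifying $\left.\tfrac{\dd}{\dd t}\right|_{t=0}\vp(\Phi_t v) = \lambda a\cdot\partial_v\vp(v)$, one rearranges algebraically to $\vp - \lambda T[\vp] = \psi$. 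This last identification is immediate under the hypothesis $\vp \in \D(T)$; under the alternative hypothesis $\psi \in \D(T)$, I would first bootstrap regularity from the integral equation, noting that $B[\psi]$ inherits its $\mathcal{C}^1$ smoothness from $\psi$ by differentiation under the integral, while the characteristic identity just displayed forces the existence of the directional derivative $a \cdot \partial_v A[\vp]$ for any $\vp \in \mathcal{C}_0(\R)$, which is exactly the derivative that enters $T$.

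The main technical obstacle lies in the converse direction: because the angular kernel $b$ is only in $L^\infty$, the operator $\Ll^*_+$ provides no transverse smoothing in $v$, so all regularity must come from the transport flow via integration against the exponential weight. Handling the split hypothesis ``$\vp \in \D(T)$ or $\psi \in \D(T)$'' cleanly is the delicate point, and is precisely what makes the proposition usable later on, when it will be combined with a Picard-type construction of the solution of the integral equation in order to check the surjectivity property demanded in the definition of a Markov generator.
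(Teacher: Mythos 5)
Your proposal is correct and follows essentially the same route as the paper, which only states that the proof ``is obtained by a direct computation'' and whose derivation along the characteristics $z_v$ immediately preceding the statement is exactly your forward direction (including the observation that only boundedness of $\vp$ and the sign of $1+\lambda C_b$ are needed to kill the boundary term). The one caveat is that your converse argument produces only the directional derivative of $\vp$ along $a$ rather than full membership of $\vp$ in $\D(T)$, but since $a\cdot\partial_v\vp$ is the only derivative entering $T[\vp]$, this suffices for the equation as stated.
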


\noindent
The proof of Proposition \ref{PROPOGeneralizdEquatSurje} is obtained by a direct computation.

\noindent
Finally, as a last intermediate step to establish the results we need for the proof of Theorem \ref{thm:wp1}, we will prove that the operator $A$ is bounded, and determine its norm, as a linear operator acting on $\mathcal{C}_0(\mathbb{R}^d)$.

\begin{prop}[Boundedness of the operator $\lambda A$]
\label{PROPOBoundednesOpera__A__}
Let the operator $A$ be defined as in \eqref{EQUATDefinOpera__A__}. Then we have that $\lambda \|A\| <1$.
\end{prop}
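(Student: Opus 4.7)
The plan is to refine the bound on $|A[\vp]|$ that already appeared in the proof of Lemma \ref{LEMMADomaiImageOpera_A_B_} by keeping track of the full exponential decay, instead of discarding the factor $\lambda C_b$ in the exponent as was done there for the purpose of proving only $L^\infty$-boundedness. More precisely, I would fix $\vp \in \mathcal{C}_0(\mathbb{R}^d)$ with $\|\vp\|_\infty \leq 1$, and start from the pointwise estimate
\begin{align*}
\left\vert \Ll^*_+[\vp](w) \right\vert \leq \int_{\mathbb{S}^{d-1}} b(|N\cdot\omega|)\,|\vp(w')|\,\dd\omega \leq C_b \|\vp\|_\infty,
\end{align*}
valid for every $w \in \mathbb{R}^d$, as a direct consequence of Assumption \ref{ass:kernelB} and the definition of $C_b$.

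Plugging this into the definition \eqref{EQUATDefinOpera__A__} of $A[\vp]$ and computing the exponential integral explicitly (this is where the refinement with respect to Lemma \ref{LEMMADomaiImageOpera_A_B_} occurs, since $1+\lambda C_b$ is a constant in the variable $u$), one finds, uniformly in $v$,
\begin{align*}
\left\vert A[\vp](v) \right\vert \leq C_b \|\vp\|_\infty \int_0^{+\infty} e^{-s(1+\lambda C_b)}\,\dd s = \frac{C_b\,\|\vp\|_\infty}{1+\lambda C_b}\cdotp
\end{align*}
Taking the supremum over $v$ and the supremum over $\vp$ in the unit ball of $\mathcal{C}_0(\mathbb{R}^d)$ then yields the operator norm estimate $\|A\| \leq C_b/(1+\lambda C_b)$.

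Multiplying by $\lambda$ gives
\begin{align*}
\lambda \|A\| \leq \frac{\lambda C_b}{1+\lambda C_b} = 1 - \frac{1}{1+\lambda C_b} < 1,
\end{align*}
which is the desired strict inequality. There is no real obstacle here: the only subtlety worth highlighting is that the strictness of the inequality relies crucially on using the sharp exponent $1+\lambda C_b$ (which ensures that the denominator is strictly greater than $\lambda C_b$), whereas the crude bound $e^{-\int_0^s [1+\lambda C_b]\,\dd u} \leq e^{-s}$ employed in Lemma \ref{LEMMADomaiImageOpera_A_B_} would only produce $\lambda \|A\| \leq 1$, insufficient for the contraction-type arguments of the Neumann series to follow.
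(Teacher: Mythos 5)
Your proposal is correct and follows essentially the same route as the paper: the paper likewise sharpens the estimate from Lemma \ref{LEMMADomaiImageOpera_A_B_} by evaluating $\int_0^{+\infty} e^{-(1+\lambda C_b)s}\,\dd s = (1+\lambda C_b)^{-1}$ exactly, arriving at $\vert \lambda A[\vp]\vert(v) \leq \frac{\lambda C_b}{1+\lambda C_b}\|\vp\|_\infty$ and hence $\lambda\|A\| < 1$. Your closing remark about why the cruder bound $e^{-s}$ would only give $\lambda\|A\|\leq 1$ matches the paper's own comment that this computation "improves the estimate obtained along the lines of the proof of Lemma \ref{LEMMADomaiImageOpera_A_B_}."
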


\begin{proof}
Let $\vp \in \mathcal{C}_0(\mathbb{R}^d)$. A similar estimate as in the proof of Lemma \ref{LEMMADomaiImageOpera_A_B_}, slightly more precise, shows that, for any $v\in \mathbb{R}^d$:
\begin{align}
\left\vert A[\vp]\right\vert (v) &\leq C_b \| \vp \|_\infty \underbrace{\int_0^{+\infty}  e^{-\int_0^s [ 1 + \lambda C_b ] \dd u} \dd s}_{= I_\lambda(v)}.
\end{align}
We have:
\begin{align}
I_\lambda(v) = \int_0^{+\infty} e^{-\int_0^s [1+\lambda C_b] \dd u} = \int_0^{+\infty} e^{-(1+\lambda C_b)s} \dd s = \frac{1}{1+\lambda C_b},
\end{align}
so that:
\begin{align}
\left\vert \lambda A[\vp] \right\vert(v) \leq \frac{\lambda C_b}{1 + \lambda C_b} \| \vp \|_\infty,
\end{align}
which proves that $\lambda A$ is bounded, with an operator norm strictly smaller than $1$ for each $\lambda >0$. This improves the estimate obtained along the lines of the proof of Lemma \ref{LEMMADomaiImageOpera_A_B_}.
\end{proof}

\subsection{Proof of Theorem \ref{thm:wp1}}

In order to prove Theorem \ref{thm:wp1}, we first establish that the closure $\overline{T}$ of the operator $T$, defined in \eqref{def:opA}, is a Markov generator. This will allow us to solve the adjoint equation \eqref{eq:adjCauchy2} for the test functions $\varphi$. In turn, choosing test functions $\vp$ solving the adjoint equation \eqref{eq:adjCauchy2} will yield a major simplification in the weak form \eqref{eq:weakf} of the original equation \eqref{eq:LinInBolstrong_Cauchy}.

\begin{proof}[Proof of Theorem \ref{thm:wp1}]
We will check the first three assumptions of Definition \ref{def:Gen} in the case of the unbounded operator T defined in \eqref{def:opA}. Then, we will consider its closure $\overline{T}$, for which these assumptions will also hold, and prove finally that the fourth assumption of Definition \ref{def:Gen} holds also for $\overline{T}$.
\begin{enumerate}
    \item Since the domain $\D(T)$ contains the space $\mathcal{C}_c^\infty(\R)$, it is dense in $\mathcal{C}_0(\R)$. Therefore the first point of Definition \ref{def:Gen} holds for $T$.
    \item The sequence of functions $(\vp_n)_n$ given by the expression \eqref{EQUAT_TestFonctMarkovGene} satisfy for $T$ the second condition of Definition \ref{def:Gen}, as a consequence of the different properties established in Lemma \ref{LEMMETest_Funct} for such functions. Then, the second point of Definition \ref{def:Gen} holds for $T$.
    \item We prove now that $T$ satisfies the third point of Definition \ref{def:Gen}. Let $\lambda \geq 0$ be a non-negative real number, and let us consider the only two different cases that can occur.\\
    Let $\vp \in \D(T)$ and let us first assume that $\inf_{v \in \R} \vp(v)$ is not attained at infinity. It means that there exists a compact set $K \subseteq \R$ and $v_0 \in K$ such that
\begin{equation}
    \inf_{v \in \R} \vp(v) = \inf_{v \in K} \vp(v)= \min_{v \in K} \vp(v)= \min_{v \in \R}\vp(v)=\vp(v_0).
\end{equation}
Then let $\psi=\vp-\lambda T[ \vp ]$, we have that
\begin{align}
    \inf_{v \in \R} \psi(v) \leq \psi(v_0)=\vp(v_0)-\lambda a \cdot \partial_v \vp(v_0)-\lambda \Ll^*(\vp)(v_0) \leq \vp(v_0)= \inf_{v \in \R}\vp(v)
\end{align}
where we have used the fact that $\partial_v \vp(v_0)=0$ and that $\Ll^*(\vp)(v_0) \geq 0$.\\
Let us now assume that $\inf_{v \in \R} \vp(v)$ is attained at infinity. By the definition of the infimum, there exists a sequence of points $(v_n)_n \subseteq \R$ such that $|v_n|\rightarrow + \infty$ as $n \rightarrow + \infty$ and such that
\begin{equation*}
    \vp(v_n) \leq \inf_{v \in \R} \vp(v)+\frac{1}{n}. 
\end{equation*}
Let $\psi=\vp-\lambda T[\vp]$, in this case for any $n\in \mathbb{N}^*$ we have
\begin{align}
    \inf_{v \in \R}\psi(v) \leq \psi(v_n) & = \vp(v_n)-\lambda a \cdot \partial_v \vp(v_n)-\lambda\Ll^*(\vp)(v_n) \notag \\
    &\leq \inf_{v \in \R}\vp(v)+\frac{1}{n}-\lambda a \cdot \partial_v \vp(v_n)-\lambda\Ll^*(\vp)(v_n).
\end{align}
Taking the limit as $n \rightarrow + \infty$ the right-hand side converges to $\inf_{v \in \R}\vp(v)$ since $|v_n| \rightarrow + \infty$ and $\partial_v \vp, \Ll^*(\vp) \in \mathcal{C}_0(\R)$. This gives
\begin{equation*}
    \inf_{v \in \R}\psi(v) \leq \inf_{v \in \R} \vp(v).
\end{equation*}
Therefore $T$ satisfies the third point of Definition \ref{def:Gen}.
\item As for the fourth point of Definition \ref{def:Gen}, the surjectivity of $\text{id} - \lambda \overline{T}$, where $\overline{T}$ is the closure of the operator $T$, is obtained in two steps. First, let us observe that the image of $\text{id} - \lambda T$ is dense in $\mathcal{C}_0(\R)$. Indeed, let us consider any $\psi \in \D(T)$. By Lemma \ref{LEMMADomaiImageOpera_A_B_}, the mapping $A$ introduced in \eqref{EQUATDefinOpera__A__} is a linear mapping from $\mathcal{C}_0(\R)$ to itself and, by Proposition \ref{PROPOBoundednesOpera__A__}, $\text{id} - \lambda A$ is invertible. Hence we can find a unique $\vp \in \mathcal{C}_0(\R)$ that solves the generalized equation \eqref{EQUATSurjeId-LTIntegVersi}. Since in addition $\psi$ was assumed to belong to $\D(T)$, by Proposition \ref{PROPOGeneralizdEquatSurje} the regularity of $\psi$ allows to deduce that $\vp \in \D(T)$ as well.\\
Now, $T$ being a Markov pregenerator, that is, $T$ fulfills the three first conditions of Definition \ref{def:Gen} (see \cite{Ligg008}), we deduce that $T$ is closable (relying for instance on Proposition 2.5 in \cite{Ligg008}), and that the image of $\text{id} - \lambda\overline{T}$ is the whole space $\mathcal{C}_0(\R)$ (since the image of $\overline{T}$ is closed, by Proposition 2.6 in \cite{Ligg008}).\\
The fourth point of Definition \ref{def:Gen} holds as well for $\overline{T}$.
\end{enumerate}
To conclude, let us observe that $T$ being a Markov pregenerator, then so is its closure $\overline{T}$. Moreover $\overline{T}$ fulfills also the fourth condition of Definition \ref{def:Gen} and we conclude that $\overline{T}$ is a Markov generator. Therefore, by the Hille-Yosida theorem \ref{thm:HY}, the operator $\overline{T}$ generates a Markov semigroup $S(t)$.\\
Furthermore if $\vp_0 \in \D(T)$ then $S(t)\vp_0=\vp(t,\cdot) \in \mathcal{C}^1$ can be chosen as a test function in Definition \ref{def:weakSol}. Now for every $f_0 \in \M$  and $\vp \in \mathcal{C}_0(\R)$ we define $f\in C([0,+\infty),\M)$ by means of the \emph{duality formula} \begin{equation}\label{eq:dualityFormula}
        \int_{\R}\vp(v)f(t,\dd v)=\int_{\R}\vp(v)S^*(t)f_0(\dd v)=\int_{\R} S(t)\vp(v) f_0(\dd v). 
    \end{equation}
    Due to the existence of the semigroup $S(t)$ this definition of $f(t)$ is meaningful. Let now  $u(t) \in \mathcal{C}^1([0,+\infty),\mathscr{D}(T))$. Notice that $\overline{T}[u(t)]= T[u(t)]$ and that $S(t)(T[u(t)])=T[S(t)u(t)]$ since $u(t)\in \mathscr{D}(T)$ for all $t \geq 0$.
    Set 
    \begin{equation}
        \Phi(t,v)=\partial_t u(t,v)+T[u(t)](v).
    \end{equation}
    One has that
    \begin{equation}\label{xi}
        S(t)\Phi(t,v)=\partial_t(S(t)u(t,v)).
    \end{equation} 
Integrating \eqref{xi} with respect to $f_0$ over $\R$ and integrating over $[0,t] \times \R$ yields
\begin{equation}
    \int_0^t\dd t\int_{\R}f_0(\dd v)S(s)\Phi(s,v)= \int_0^t\dd s\int_{\R}f_0(\dd v)\partial_s(S(s)u(s,v))=\int_{\R}f_0(\dd v)(S(t)u(t,v)-u(0,v)).
\end{equation}
The above equation and the duality formula \eqref{eq:dualityFormula} give
\begin{align}
   \int_{\R} f(t,\dd v)u(t,v) & - \int_{\R} f_0(\dd v)u(0,v)=\int_0^t \dd s \int_{\R}f_0(\dd v)S(s)\Phi(s,v) = \int_0^t \dd t \int_{\R} f(s,\dd v)\Phi(s) \notag \\
   & = \int_0^t \dd s \int_{\R} f(s,\dd v) \big(\partial_s u(s,v)+T[u(s)](v)\big)
\end{align}
which proves that $f(t,\dd v)$ is a weak solution of equation \eqref{eq:LinInBolstrong_Cauchy}, in the sense of Definition \ref{def:weakSol}.\\
\newline
We turn now to the uniqueness of the weak solution of \eqref{eq:LinInBolstrong_Cauchy}. Let $f_1,f_2$ be two weak solutions to \eqref{eq:LinInBolstrong_Cauchy}, in the sense of Definition \ref{def:weakSol},  with the same initial datum $f_0$. Set $g=f_1-f_2$ and notice that $g$ also is a solution to \eqref{eq:LinInBolstrong_Cauchy}, in the sense of Definition \ref{def:weakSol}, with  initial datum $g_0 = 0$. 
Let $u$ be a non-zero test function such that
\begin{equation}
    \int_{\R}g(\bar{t},\dd v)u(\bar t,v) \neq 0 \; \text{for some $\bar t>0$}.
\end{equation}
Then the function $u(t)=S(\bar t-t)u(\bar t,v)$ is the unique solution of 
\begin{equation}\label{cauchyAux}
\begin{cases}
	-\frac{\dd u}{ \dd t}(t)=a \cdot \partial_{v}  u(t)+\Ll^* (u(t)) & \\
	u(\bar t)=\varphi(\bar t), \; \; 0 \leq t \leq \bar t.  
\end{cases}
\end{equation}
Equation \eqref{eq:weakf} then yields
\begin{gather}
    \int_{\R}g(\bar t,\dd v)u(\bar t,v)= 
    \int_0^{\bar t} \int_{\R}g(t,\dd v)\left[\partial_t  u(t,v)+ a \cdot \partial_v  u(t)+\Ll^*u(t)\right]\dd t.
\end{gather}
The right-hand side is identically zero by \eqref{eq:weakf}, hence $g(t) = 0$ for all $t$. We obtained therefore that $f_1 = f_2$, hence the uniqueness.\\
The proof of Theorem \ref{thm:wp1} is complete.
\end{proof}

\section{Existence, Uniqueness and Stability of steady states}\label{sec:SteadyState}

\subsection{Moments estimates}\label{sec:momentEst}
In order to prove Theorem \ref{thm:steadystate} we need some a priori estimates on the moments of the solution to \eqref{eq:LinInBolstrong_Cauchy}. Recall the definition of the moments $M_0(t), M_1(t), M_2(t)$ given in \eqref{eq:M_0}-\eqref{eq:M_1}-\eqref{eq:M_2}. In the classical literature on the Boltzmann equation these quantities represent the main macroscopic observables related to the solution of the Boltzmann equation.\\
From the weak formulation \eqref{eq:weakf} it is possible to derive a system of evolution equations for $M_0,M_1,M_2$ by choosing as test function $\left\{1,v,|v|^2\right\}$ respectively. It is important to note that the functions $\{1,v,|v|^2\}$ are not test functions in the sense of Definition \ref{def:weakSol} and we should consider suitable cut-offs in addition to obtain admissible test functions. However, assuming enough integrability for the solution $f$, the results obtained by relaxing the cut-offs are the same as the one we would obtain formally choosing $\{1,v,|v|^2\}$ as test functions. We refer, for instance, to Remark 6 in \cite{MNV} for a detailed discussion concerning this technical issue. Therefore, to keep the exposition as simple as possible we will take $\{1,v,|v|^2\}$ as test functions in \eqref{eq:weakf}. Hence, using also the collision rule \eqref{EQUAT_Loi_Colli} leads to
\begin{equation}\label{eq:evolutionMacro}
    \begin{cases}
    \vspace{2mm}
        \displaystyle{\frac{\dd M_0 (t)}{\dd t}=0}, & \\
    \vspace{2mm}    \displaystyle{\frac{\dd M_1(t)}{\dd t} = a M_0(t)-(1+r)\int_{\R}f(t,dv)\int_{\mathbb{S}^{d-1}}b(|N \cdot \omega|)(v\cdot \omega)\omega \dd \omega}, & \\
        \displaystyle{\frac{\dd M_2(t)}{\dd t} =2a \cdot M_1(t)-2(1-r^2) \int_{\R}f(t,dv) \int_{\mathbb{S}^{d-1}}b(|N \cdot \omega|) (v \cdot \omega)^2\dd \omega}.
    \end{cases}
\end{equation}

\noindent
The system of equations \eqref{eq:evolutionMacro} can be integrated exactly. This is the purpose of the next propositions.

\begin{prop}\label{prop:estimatesMacrod=2}
    Let $d=2$, and let $a \in \R$ be a fixed vector. Assume that the restitution coefficient $r$ satisfies $0 <r \leq 1$. Let $f\in C([0,+\infty),\M)$ be the unique weak solution to \eqref{eq:LinInBolstrong_Cauchy} given by Theorem \ref{thm:wp1}. Suppose $\{|v|,|v|^2\}f_0 \in \M$. Then, we have that $M_0(t),M_1(t),M_2(t) \in L^{\infty}([0,+\infty))$.
\end{prop}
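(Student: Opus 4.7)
The plan is to integrate the moment system \eqref{eq:evolutionMacro} explicitly, by first reducing the two angular integrals on the right-hand sides to closed form thanks to the rotational symmetry available in $d=2$, and then solving the resulting triangular linear ODE for $(M_0,M_1,M_2)$ by variation of constants.

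Specifically, I parametrize $\omega = \cos\theta\, N + \sin\theta\, N^\perp$, where $\theta$ is the oriented angle between $\omega$ and $N = v/|v|$ and $N^\perp$ is $N$ rotated by $\pi/2$. Since $b(|N\cdot\omega|) = b(|\cos\theta|)$, a direct computation gives
\begin{equation*}
\int_{\mathbb{S}^{1}} b(|N\cdot\omega|)(v\cdot\omega)\omega\,\dd\omega = C\,v, \qquad \int_{\mathbb{S}^{1}} b(|N\cdot\omega|)(v\cdot\omega)^{2}\,\dd\omega = C\,|v|^{2},
\end{equation*}
where $C := \int_{0}^{2\pi} b(|\cos\theta|)\cos^{2}\theta\,\dd\theta > 0$; indeed the $N^\perp$-component of the first integral vanishes by the $\pi$-periodicity of $b(|\cos\theta|)$ combined with $\int_{0}^{2\pi}\sin(2\theta)\,\dd\theta = 0$, while the two remaining scalar integrals coincide. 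Integrating against $f(t,\dd v)$ and inserting into \eqref{eq:evolutionMacro} closes the system as
\begin{equation*}
\frac{\dd M_0}{\dd t} = 0, \qquad \frac{\dd M_1}{\dd t} = a\,M_0 - (1+r)C\,M_1, \qquad \frac{\dd M_2}{\dd t} = 2a\cdot M_1 - 2(1-r^{2})C\,M_2 .
\end{equation*}

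From here the ODE analysis is elementary. The first equation gives conservation of mass, $M_0(t) = M_0(0)$. The second is a linear vector-valued ODE with constant coefficients and strictly positive decay rate $(1+r)C$; Duhamel yields
\begin{equation*}
M_1(t) = e^{-(1+r)Ct}\,M_1(0) + \frac{a\,M_0(0)}{(1+r)C}\bigl(1 - e^{-(1+r)Ct}\bigr),
\end{equation*}
which is uniformly bounded on $[0,+\infty)$ and converges to an explicit limit. Feeding this bound into the $M_2$ equation and applying Duhamel a second time with dissipation rate $2(1-r^{2})C$ gives the desired uniform bound on $M_2$ in the regime $r \in (0,1)$ where the dissipation is strict.

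The main obstacle, rather than this explicit integration, is the justification of the closed system \eqref{eq:evolutionMacro} itself, since $1$, $v$, and $|v|^{2}$ are not admissible test functions in the sense of Definition \ref{def:weakSol}. Following the remark preceding \eqref{eq:evolutionMacro}, I would introduce a smooth cut-off $\chi_R(v)$ equal to $1$ on $\{|v|\le R\}$ and vanishing outside $\{|v|\le 2R\}$, use the admissible test functions $\chi_R\cdot\{1,v,|v|^{2}\}$ in \eqref{eq:weakf}, and send $R \to +\infty$. The hypothesis $\{|v|,|v|^{2}\}f_{0} \in \M$ together with the regularity of the semigroup constructed in Theorem \ref{thm:wp1} propagates enough integrability to control the commutator errors coming from the cut-off via dominated convergence, and thus allows one to recover \eqref{eq:evolutionMacro} rigorously. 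Once this step is secured, the $L^{\infty}$ bound follows at once from the two Duhamel representations above.
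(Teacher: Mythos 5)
Your argument is essentially identical to the paper's: the paper also reduces the angular integrals by rotating $N$ onto $e_1$ (obtaining the same constant, written as $2\int_{-1}^{1}b(|x|)\frac{x^2}{\sqrt{1-x^2}}\,\dd x$, which equals your $C$ under $x=\cos\theta$), closes the system \eqref{eq:evolutionMacro} into a triangular linear ODE, and integrates it explicitly by Duhamel to read off the uniform bounds, with the cut-off justification of the test functions $\{1,v,|v|^2\}$ handled exactly as in the remark you cite. The proposal is correct and requires no further comment.
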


\begin{proof}
    We start from \eqref{eq:evolutionMacro} and we consider a rotation $R=R_v$ such that $N=Re_1$ (recalling that $N = v/\vert v \vert$) and introduce $\tilde \omega$ defined as $\omega= R \tilde \omega$. For $d=2$ we have $\tilde \omega = (\tilde \omega_1, \tilde \omega_2) = (\cos \theta, \sin \theta)$ with $\theta \in [0,2\pi]$. Therefore we have
    \begin{align}
        \int_{\mathbb{S}^1}b(|N \cdot \omega|)(v \cdot \omega) \omega \dd \omega & = |v| R \int_{\mathbb{S}^1} b(|\tilde \omega_1|)\tilde \omega_1 \tilde \omega\dd \tilde \omega = |v|R \int_0^{2\pi} b(|\cos \theta|) \cos \theta \left(\begin{array}{cc}
             \cos \theta \\
             \sin \theta
        \end{array}\right) \dd \theta \notag \\
        & = |v|\left(\int_0^{2\pi}b(|\cos \theta|) \cos^2 \theta \dd \theta \right) Re_1 = v \int_0^{2\pi}b(|\cos \theta|) \cos^2 \theta \dd \theta \notag \\
        &=2 v \int_{-1}^1 b(x) \frac{x^2}{\sqrt{1-x^2}}\dd x
    \end{align}
    and
    \begin{align}
        \int_{\mathbb{S}^1} b(|N \cdot \omega|) (v \cdot \omega)^2 \dd \omega & = |v|^2\int_{\mathbb{S}^1} b(|\tilde \omega_1|)(\tilde \omega_1)^2 \dd \tilde \omega = |v|^2 \int_{0}^{2\pi} b(|\cos \theta|) \cos^2 \theta \dd \theta \notag \\
        &= 2|v|^2 \int_{-1}^1 b(|x|) \frac{x^2}{\sqrt{1-x^2}}\dd x.
    \end{align}
     Therefore for we obtain the following equations:
    \begin{align}
        M_0(t) & =M_0(0), \\
        \frac{\dd M_1(t)}{\dd t} &= aM_0(0)-2(1+r)\left( \int_{-1}^1 b(|x|) \frac{x^2}{\sqrt{1-x^2}}\dd x \right)M_1(t), \\
        \frac{\dd M_2(t)}{\dd t} & = 2 a \cdot M_1(t)-2(1-r^2)\left( \int_{-1}^1 b(|x|) \frac{x^2}{\sqrt{1-x^2}}\dd x\right)M_2(t).
    \end{align}
    We define $C_1=2(1+r)\left( \int_{-1}^1 b(|x|) \frac{x^2}{\sqrt{1-x^2}}\dd x \right), C_2= 2(1-r^2)\left( \int_{-1}^1 b(|x|) \frac{x^2}{\sqrt{1-x^2}}\dd x \right)$ and we observe that $C_1-C_2>0$. Integrating we get
    \begin{align}
        M_0(t) & = M_0(0), \label{eq:Mass} \\
    M_1(t) & = e^{-C_1 t} M_1(0)+\frac{aM_0(0)}{C_1}\left(1-e^{-C_1t}\right), \label{eq:bulku} \\
    M_2(t) & = e^{-C_2t}M_2(t)+e^{-C_2t} \int_0^t e^{C_2s} a \cdot M_1(s) \dd s \notag  \\
    & = e^{-C_2t}M_2(0)+ \frac{e^{-C_2t}-e^{-C_1t}}{C_1C_2} a \cdot M_1(0)+\frac{|a|^2M_0(0)}{C_1C_2}(1-e^{-C_2t})+ \frac{|a|^2M_0(0)}{C_1(C_1-C_2)}(e^{-C_2t}-e^{-C_1t}). \label{eq:energyEst}
    \end{align}
    Since by hypothesis we have $M_0(0),M_1(0),M_2(0)$ finite, from \eqref{eq:Mass}-\eqref{eq:bulku}-\eqref{eq:energyEst} %and from the fact that $M_0(t),M_1(t),M_2(t)$ are continuous, 
    we obtain that the moments remain bounded for all times. This concludes the proof.    
\end{proof}

\begin{prop}\label{prop:estimatesMacrod=3}
    Let $d=3$, and let $a \in \R$ be a fixed vector. Assume that the restitution coefficient $r$ satisfies $0 <r \leq 1$. Let $f\in C([0,+\infty),\M)$ be the unique weak solution to \eqref{eq:LinInBolstrong_Cauchy} given by Theorem \ref{thm:wp1}. Suppose $\{|v|,|v^2|\}f_0 \in \M$. Then, we have that $M_0(t),M_1(t),M_2(t) \in L^{\infty}([0,+\infty))$.
\end{prop}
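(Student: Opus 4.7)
The proof is a direct analogue of the $d=2$ case treated in Proposition \ref{prop:estimatesMacrod=2}; only the explicit form of the angular integrals over the sphere changes. The plan is to reduce the macroscopic system \eqref{eq:evolutionMacro} to an autonomous linear ODE system by computing the two angular integrals appearing on its right-hand side, and then integrate it explicitly as in the two-dimensional case.

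First, for fixed $v\in\mathbb{R}^3$, I would introduce a rotation $R = R_v \in SO(3)$ such that $Re_1 = N = v/|v|$, and change variables $\omega = R\tilde{\omega}$ in the two angular integrals appearing in \eqref{eq:evolutionMacro}. Since $N\cdot\omega = \tilde{\omega}_1$ and $v\cdot\omega = |v|\tilde{\omega}_1$, the integrals become
\begin{align*}
\int_{\mathbb{S}^{2}} b(|N\cdot\omega|)(v\cdot\omega)\omega\,\dd\omega &= |v|\, R \int_{\mathbb{S}^{2}} b(|\tilde{\omega}_1|)\,\tilde{\omega}_1\,\tilde{\omega}\,\dd\tilde{\omega}, \\
\int_{\mathbb{S}^{2}} b(|N\cdot\omega|)(v\cdot\omega)^2\,\dd\omega &= |v|^2 \int_{\mathbb{S}^{2}} b(|\tilde{\omega}_1|)\,\tilde{\omega}_1^{\,2}\,\dd\tilde{\omega}.
\end{align*}
By the $(\tilde{\omega}_2,\tilde{\omega}_3)\leftrightarrow(-\tilde{\omega}_2,-\tilde{\omega}_3)$ symmetry, only the $e_1$-component of the first vector integral survives, so both integrals collapse to multiples of $\int_{\mathbb{S}^2} b(|\tilde{\omega}_1|)\tilde{\omega}_1^{\,2}\,\dd\tilde{\omega}$. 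Parameterising in spherical coordinates $\tilde{\omega} = (\cos\theta,\sin\theta\cos\phi,\sin\theta\sin\phi)$, with surface element $\sin\theta\,\dd\theta\,\dd\phi$, and then setting $x=\cos\theta$, this integral equals $2\pi\int_{-1}^{1} b(|x|)x^{2}\,\dd x = 4\pi\int_{0}^{1} b(x)x^{2}\,\dd x$, which is a finite constant under Assumption \ref{ass:kernelB}. After applying $R$ to $|v|e_1$ to recover $v$, the first integral equals $v\cdot 4\pi\int_{0}^{1} b(x)x^{2}\,\dd x$.

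Plugging these identities back into \eqref{eq:evolutionMacro}, the system reduces to
\begin{align*}
\frac{\dd M_0}{\dd t} &= 0, \\
\frac{\dd M_1}{\dd t} &= a M_0(t) - C_1' M_1(t), \\
\frac{\dd M_2}{\dd t} &= 2a\cdot M_1(t) - C_2' M_2(t),
\end{align*}
where $C_1' = 4\pi(1+r)\int_{0}^{1} b(x)x^{2}\,\dd x$ and $C_2' = 4\pi(1-r^{2})\int_{0}^{1} b(x)x^{2}\,\dd x$, both non-negative and finite by Assumption \ref{ass:kernelB}. This is exactly the same triangular linear system as in the $d=2$ case, modulo the values of the constants.

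Finally, integrating the system exactly (as was done in \eqref{eq:Mass}--\eqref{eq:energyEst}) yields $M_0(t) = M_0(0)$, an explicit exponentially decaying plus constant expression for $M_1(t)$, and an analogous closed-form expression for $M_2(t)$ given in terms of $M_0(0)$, $M_1(0)$, $M_2(0)$, $a$, $C_1'$, $C_2'$ and bounded functions of $t$. Since the hypothesis $\{|v|,|v|^2\}f_0\in\mathscr{M}_+(\mathbb{R}^d)$ ensures that $M_0(0),M_1(0),M_2(0)$ are finite, one reads off that $M_0,M_1,M_2 \in L^\infty([0,+\infty))$, which concludes the argument. The only subtle point (and the main technical book-keeping issue) is the justification that $\{1,v,|v|^2\}$ may be used as test functions despite not lying in $\mathcal{C}^1_0(\mathbb{R}^d)$; as noted in Section \ref{sec:momentEst}, this is handled by a cut-off approximation and passing to the limit exactly as in Remark 6 of \cite{MNV}, using the moment bounds obtained for the cut-off test functions.
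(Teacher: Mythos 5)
Your proof follows essentially the same route as the paper's: a rotation sending $e_1$ to $N$, reduction of the angular integrals to $2\pi\int_{-1}^{1}b(|x|)x^{2}\,\dd x$ (your $4\pi\int_{0}^{1}b(x)x^{2}\,\dd x$ equals this by evenness of the integrand), and explicit integration of the resulting triangular linear ODE system exactly as in the $d=2$ case. The argument is correct, and your closing remark on justifying $\{1,v,|v|^2\}$ as test functions via cut-offs matches the discussion the paper places at the start of Section \ref{sec:momentEst}.
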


\begin{proof}
    The proof is analogous to the one of Proposition \ref{prop:estimatesMacrod=2}. In fact, consider a rotation $R$ such that $n=Re_1$ and changing variable in the integral as $\omega= R\tilde \omega$ we get 
    \begin{align}
        -(1+r) \int_{\mathbb{S}^{d-1}} b(|N \cdot \omega|)(v \cdot \omega)\omega \dd \omega &= -2\pi(1+r) \left(\int_{-1}^1 b(|x|)x^2 \dd x \right)v, \\
       -(1-r^2) \int_{\mathbb{S}^{d-1}}b(|N \cdot \omega|)(n\cdot \omega)^2 \dd \omega &= -2\pi(1-r^2) \left(\int_{-1}^1 b(|x|) x^2 \dd x\right) |v|^2,
    \end{align}
    where in the integrals we have used three dimensional polar coordinates with the North pole aligned on $e_1$. Hence we obtain
    \begin{align}
        M_0(t) & =M_0(0), \\
        \frac{\dd M_1(t)}{\dd t} &= aM_0(0)-2\pi(1+r)\left(\int_{-1}^1 b(|x|)x^2 \dd x \right)M_1(t), \\
        \frac{\dd M_2(t)}{\dd t} & = a \cdot M_1(t)-2\pi(1-r^2)\left(\int_{-1}^1 b(|x|)x^2 \dd x \right)M_2(t).
    \end{align} 
    Setting $\tilde C_1=2\pi(1+r)\left(\int_{-1}^1 b(|x|)x^2 \dd x \right), \tilde C_2= 2\pi(1-r^2)\left(\int_{-1}^1 b(|x|)x^2 \dd x \right)$, we notice that $\tilde C_1-\tilde C_2>0$. Integrating we obtain:
    \begin{align}
        M_0(t) & = M_0(0), \label{eq:Massd=3} \\
    M_1(t) & = e^{-\tilde C_1 t} M_1(0)+\frac{aM_0(0)}{\tilde C_1}\left(1-e^{-\tilde  C_1t}\right), \label{eq:bulkud=3} \\
    M_2(t) & = e^{-\tilde C_2t}M_2(t)+e^{-\tilde C_2t} \int_0^t e^{\tilde C_2s} a \cdot u(s) \dd s \notag  \\
    & = e^{-\tilde C_2t}E_0+ \frac{e^{-\tilde C_2t}-e^{-\tilde C_1t}}{\tilde C_1\tilde C_2} a \cdot M_1(0)+\frac{|a|^2M_0(0)}{\tilde C_1\tilde C_2}(1-e^{-\tilde C_2t})+ \frac{|a|^2M_0(0)}{\tilde C_1(\tilde C_1-\tilde C_2)}(e^{-\tilde C_2t}-e^{-\tilde C_1t}). \label{eq:energyEstd=3}
    \end{align}
    From the explicit solutions \eqref{eq:Massd=3}, \eqref{eq:bulkud=3}, \eqref{eq:energyEstd=3} the proof follows.   
\end{proof}

\begin{remark}
    We notice that the solutions of \eqref{eq:evolutionMacro} obtained in Proposition \ref{prop:estimatesMacrod=2} and Proposition \ref{prop:estimatesMacrod=3} have the same structure and they only differ because of the constants due to the collision kernel $b$ and the dimension $d$. 
\end{remark}

\subsection{Existence of a steady state}\label{ssec:existence}
To prove existence of a steady state we will use the results of Proposition \ref{prop:estimatesMacrod=2} and Proposition \ref{prop:estimatesMacrod=3} which allow us to use Schauder's fixed point theorem to prove the existence of a stationary solution to \eqref{eq:LinInBolstrong_Cauchy} in the sense of Definition \ref{def:stationarySol}. In fact, without loss of generality we choose $M_0(0)=1$ in \eqref{eq:Mass} and \eqref{eq:Massd=3}, and we define the set:
\begin{equation}\label{eq:U}
    \mathscr{U}=\left \{ f \in \M \ /\ \int_{\R} f(\dd v)=1,  \int_{\R} |v|f(\dd v) \leq a C_0, \int_{\R}|v|^2f(\dd v) \leq 
    |a|^2 \hat C_0 \right\}
\end{equation}
with $C_0, \hat C_0>0$. Notice that this definition of $\mathscr{U}$ covers both cases of $d=2$ and $d=3$. The following is the main theorem of this subsection.

\begin{thm}\label{thm:ssMM}
   Let $d=2,3$, let $a \in \R$ be a fixed vector. Assume that the restitution coefficient $r$ satisfies $0 < r < 1$. Let $f_0 \in \M$ be such that $\{|v|,|v|^2\}f_0 \in \M$ and let $f \in \mathcal{C}([0,+\infty),\M)$ be the unique weak solution to \eqref{eq:LinInBolstrong_Cauchy} obtained in Theorem \ref{thm:wp1}. Then, there exists a stationary solution $f_{\infty}$ of \eqref{eq:weakf} in the sense of Definition \ref{def:stationarySol}, or equivalently, a fixed point for the adjoint $S^*(t)$ of the semigroup $S(t)$ defined by the duality formula \eqref{eq:dualityFormula}.
\end{thm}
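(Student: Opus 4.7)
The plan is to apply Schauder's fixed point theorem to the adjoint Markov semigroup $S^*(t)$ acting on the convex set $\mathscr{U}$ introduced in \eqref{eq:U}, first producing a measure that is invariant under $S^*(t^\ast)$ for a single $t^\ast > 0$, and then upgrading this through a short averaging argument to a measure that is invariant under the whole family $(S^*(t))_{t \geq 0}$. The duality formula \eqref{eq:dualityFormula} will then identify this invariant measure with a stationary solution in the sense of Definition \ref{def:stationarySol}.

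First I would verify the topological properties of $\mathscr{U}$. Convexity and non-emptiness are immediate, and weak compactness reduces, via Prokhorov's theorem, to tightness (a consequence of Markov's inequality and the first-moment bound $\int |v|f(\dd v) \leq |a|C_0$) together with weak closedness, which follows from the weak-$*$ lower semicontinuity of $f \mapsto \int |v|^k f(\dd v)$ for $k=1,2$. Next I would exploit the explicit formulas \eqref{eq:Mass}--\eqref{eq:energyEst} from Proposition \ref{prop:estimatesMacrod=2}, together with their three-dimensional counterpart in Proposition \ref{prop:estimatesMacrod=3}, to choose the constants $C_0$ and $\hat C_0$ large enough that $S^*(t)\mathscr{U} \subseteq \mathscr{U}$ for every $t \geq 0$: the formulas give an envelope on $M_1(t)$ and $M_2(t)$ that is uniform in time and depends only on $|a|$, on $M_0(0)=1$, and on the bounds already imposed on $M_1(0)$ and $M_2(0)$, so that a sufficiently large choice of $C_0, \hat C_0$ is stable under the flow.

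With $\mathscr{U}$ convex, weakly compact and invariant, I would then establish the weak continuity of $\Phi := S^*(t^\ast)$ on $\mathscr{U}$ for some fixed $t^\ast > 0$. By the duality formula \eqref{eq:dualityFormula},
\begin{equation*}
\int_{\R} \vp(v)\,\Phi(f)(\dd v) = \int_{\R} \bigl[S(t^\ast)\vp\bigr](v)\, f(\dd v) \qquad \forall \vp \in \mathcal{C}_0(\R),
\end{equation*}
and since $S(t^\ast)\vp \in \mathcal{C}_0(\R)$ by the Markov-semigroup construction of Section \ref{sec:WP}, weak convergence $f_n \rightharpoonup f$ carries over to $\Phi(f_n) \rightharpoonup \Phi(f)$. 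Schauder's theorem then delivers $\bar f \in \mathscr{U}$ with $S^*(t^\ast)\bar f = \bar f$.

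To upgrade this discrete-time fixed point to a continuous-time invariant measure, I would set $f_\infty := (1/t^\ast)\int_0^{t^\ast} S^*(s)\bar f\,\dd s$, the integral being understood in the weak-$*$ sense; convexity of $\mathscr{U}$ keeps $f_\infty$ in $\mathscr{U}$. The periodicity identity $S^*(s+t^\ast)\bar f = S^*(s)\bar f$ combined with a change of variable yields $S^*(u)f_\infty = f_\infty$ for every $u \geq 0$, first for $u \in [0,t^\ast]$ and then for arbitrary $u$ by applying $S^*(t^\ast)^n$. Reading this invariance against test functions $\vp$ through the weak formulation \eqref{eq:weakf} then reproduces the stationary identity of Definition \ref{def:stationarySol}. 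The main obstacle I anticipate is the calibration of $C_0$ and $\hat C_0$ so that the bounds coming from \eqref{eq:bulku}--\eqref{eq:energyEst} remain below the thresholds that define $\mathscr{U}$ (not just up to a multiplicative constant), together with the rigorous handling of the weak-$*$ Bochner integral in the averaging step; the remaining ingredients are essentially automatic consequences of the moment estimates of Subsection \ref{sec:momentEst} and of the Markov-semigroup framework developed in Section \ref{sec:WP}.
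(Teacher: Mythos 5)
Your proposal is correct and follows the paper's overall strategy: Schauder's fixed point theorem applied to the adjoint semigroup $S^*$ on the convex, weak$-*$ compact, invariant set $\mathscr{U}$, with invariance supplied by the explicit moment formulas of Propositions \ref{prop:estimatesMacrod=2} and \ref{prop:estimatesMacrod=3} and continuity supplied by the duality formula \eqref{eq:dualityFormula}. The one place where you genuinely diverge is the passage from a fixed point of the single map $S^*(t^\ast)$ to a common fixed point of the whole family: the paper takes a sequence $h_k \rightarrow 0$, produces a Schauder fixed point $f_*^{(h_k)}$ for each $h_k$, extracts a weak$-*$ convergent subsequence $f_*^{(h_k)} \rightarrow f_*$, and then approximates an arbitrary $t>0$ by $m_k h_k$ to conclude $S^*(t)f_*=f_*$; you instead fix a single $t^\ast$ and perform the Krylov--Bogoliubov average $f_\infty = (t^\ast)^{-1}\int_0^{t^\ast} S^*(s)\bar f\,\dd s$, using the $t^\ast$-periodicity of $s \mapsto S^*(s)\bar f$. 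Both are standard and both work here; your averaging step invokes Schauder only once and is slightly more economical, at the price of having to make sense of the weak$-*$ Bochner integral and to check that $\mathscr{U}$ is stable under it (which follows from closedness and convexity), while the paper's diagonal argument avoids any integral but needs the extra compactness extraction and the weak$-*$ continuity of $t \mapsto S^*(t)$ at the limit point. Two minor remarks: your appeal to Prokhorov/tightness via the first-moment bound is in fact the cleanest way to see that the mass constraint $\int f(\dd v)=1$ survives weak$-*$ limits (the paper's bare Banach--Alaoglu argument implicitly relies on the same tightness to rule out loss of mass at infinity); and the calibration of $C_0,\hat C_0$ you flag as the main obstacle is harmless, since \eqref{eq:bulku} exhibits $M_1(t)$ as a convex combination of $M_1(0)$ and $aM_0(0)/C_1$, and similarly for $M_2$, so any thresholds dominating both the initial data and the asymptotic values are preserved exactly, not merely up to a constant.
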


\begin{proof}
Due to Proposition \ref{prop:estimatesMacrod=2} and Proposition \ref{prop:estimatesMacrod=3}, the set $\mathscr{U}$ is convex and closed in the weak$-*$ topology of $\M$. In addition, $\mathcal{U}$ is metrizable and hence sequentially compact and $\mathscr{U}$ is also weak$-*$ compact. To see this, we observe that $\mathscr{U}$ is contained in the unit ball of $\mathscr{M}(\R)$ and that the space $\mathcal{C}_0(\R)$ is separable. Since $\mathscr{U}$ is weak$-*$-closed by the Banach-Alaoglu's theorem (e.g., \cite{Brezis}), it follows that $\mathscr{U}$ is compact. We have that for any $h \geq 0$ $S^*(h)\mathscr{U} \subseteq \mathscr{U}$, hence the operator $S^*(h)$ is weak$-*$ compact. Therefore we can apply Schauder's fixed point theorem to prove the existence of $f_*^{(h)}$ such that $S^*(h)f_*^{(h)}= f_*^{(h)}$, and we have by the semigroup property that $S^*(mh)f_*^{(h)}= f_*^{(h)}$ for every $m \in \mathbb{N}$. We consider now a sequence $\{h_k\}_k$ such that $h_k \rightarrow 0$ and its corresponding sequence of fixed points $\big( f_*^{(h_k)} \big)_k$. This sequence is compact in $\mathscr{U}$, since $\mathscr{U}$ itself is compact, and taking a sub-sequence if needed we have $f_*^{(h_k)}\rightarrow f_*$ for some $f_* \in \mathscr{U}$ as $k \rightarrow + \infty$. For any $t >0$, there exists a sequence of integers such that $m_kh_k \rightarrow t$ as $k \rightarrow + \infty$. On the one hand, this yields
    $S^*(m_kh_k)f_*^{(h_k)}=S^*(h_k)f_*^{(h_k)}=f_*^{(h_k)} \rightarrow f_*$, while on the other hand:
    \begin{equation}
        S^*(m_kh_k)f_*^{(h_k)}=(S^*(m_kh_k)f_*^{(h_k)}-S^*(t))f_*^{(h_k)}+S^*(t)f_*^{(h_k)}.
    \end{equation}
    By the weak$-*$ continuity of the semigroup $S^*(t)$, it follows that the right-hand side converges to $S^*(t)f_*$, which gives:
    \begin{equation}
        \lim_{k \rightarrow +\infty} S^*(t)f_*= \lim_{k \rightarrow +\infty} S^*(m_kh_k)f_*^{(h_k)} = \lim_{k \rightarrow +\infty} f_*^{(h_k)}  = f_*
    \end{equation}
    for every $t \geq 0$. Therefore $f_*$ is a fixed point for $S^*(t)$ with $t \geq 0$ arbitrary, and the proof is concluded.  
\end{proof}

\subsection{Uniqueness of the steady state}
\label{ssec:uniqueness}
In this subsection we prove the uniqueness of the stationary solution obtained in Theorem \ref{thm:ssMM}. We will prove uniqueness expressing the stationary solution $f_{\infty}$ by expanding the equation $f_{\infty}=S^*(t)f_{\infty}$ by means of a Neumann series which we then prove to be convergent. This is a suitable adaptation of a well-known strategy, see for instance \cite{BNP}. The main theorem of this subsection is the following.

\begin{thm}\label{thm:uniqStatSol}
   Let $d=2$ or $3$, and let $a \in \R$ be a fixed vector. Assume that the restitution coefficient $r$ satisfies $0<r<1$. Let $f_0 \in \M$ be such that $\{|v|,|v|^2\}f_0 \in \M$. Then, the stationary solution obtained in Theorem \ref{thm:ssMM} is unique for every $t>0$.
\end{thm}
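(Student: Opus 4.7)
The plan is to rewrite the stationary identity $S^*(t) f_\infty = f_\infty$ as an integral fixed-point equation on measures, iterate it into a Neumann-type series, and exploit a strict contraction estimate on zero-mass differences. First I would split the forward semigroup generator as $T = L_0 + K$, with $L_0 \varphi = a \cdot \partial_v \varphi - C_b \varphi$ generating the exponentially damped drift semigroup $S_0(t) \varphi(v) = e^{-C_b t} \varphi(v + ta)$, and $K \varphi = \Ll^*_+ \varphi$ the gain part of $\Ll^*$. The Duhamel identity $S(t) = S_0(t) + \int_0^t S(t-s) K S_0(s) \, ds$ dualizes to
\begin{equation*}
S^*(t) f = S_0^*(t) f + \int_0^t S_0^*(s) K^* S^*(t-s) f \, ds,
\end{equation*}
so specializing $f = f_\infty$ and using stationarity $S^*(t-s) f_\infty = f_\infty$ yields
\begin{equation*}
f_\infty = S_0^*(t) f_\infty + \int_0^t S_0^*(s) K^* f_\infty \, ds.
\end{equation*}
Since $\|S_0^*(t) f_\infty\|_{\mathscr{M}} \leq e^{-C_b t} \|f_\infty\|_{\mathscr{M}} \to 0$, letting $t \to \infty$ produces
\begin{equation*}
f_\infty = \mathcal{J} f_\infty, \qquad \mathcal{J} f := \int_0^\infty S_0^*(s) K^* f \, ds,
\end{equation*}
which is the integral form underlying the Neumann argument.

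Iterating this identity yields $f_\infty = \mathcal{J}^n f_\infty$ for every $n \geq 1$, with each $\mathcal{J}^n$ unfolding into an explicit $n$-fold integral over collision times, weighted by $e^{-C_b(s_1 + \cdots + s_n)}$ and with alternating drift pushforwards and gain operators; this is the Neumann series announced before the statement. For uniqueness, suppose $f_\infty^{(1)}, f_\infty^{(2)}$ are two stationary solutions from Theorem \ref{thm:ssMM}. By mass conservation (Propositions \ref{prop:estimatesMacrod=2}--\ref{prop:estimatesMacrod=3}) they carry the same total mass, so the difference $g := f_\infty^{(1)} - f_\infty^{(2)}$ is a finite signed measure with $\int g = 0$ and $\{|v|, |v|^2\} g \in \mathscr{M}(\R)$, satisfying $g = \mathcal{J}^n g$ for every $n$. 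The uniqueness then reduces to showing $\mathcal{J}^n g \to 0$ in a suitable norm.

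The contractivity of $\mathcal{J}$ on zero-mass differences stems from the dissipative collision identity $|v' - w'|^2 = |v - w|^2 - (1 - r^2)\bigl((v-w)\cdot\omega\bigr)^2$, which strictly improves on an isometry since $r < 1$. A Cauchy--Schwarz estimate combined with $\int_0^\infty e^{-C_b s}\, ds = 1/C_b$ yields the Lipschitz contraction $\|\mathcal{J}^* \varphi\|_{\mathrm{Lip}} \leq \sqrt{\beta/C_b}\, \|\varphi\|_{\mathrm{Lip}}$ with $\beta := C_b - (1-r^2)\int b(|N\cdot\omega|)(N\cdot\omega)^2\, d\omega < C_b$. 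By Kantorovich--Rubinstein duality, this translates into a contraction of $\mathcal{J}$ with ratio $\sqrt{\beta/C_b} < 1$ on zero-mass signed measures tested against Lipschitz functions, and iterating $g = \mathcal{J}^n g$ then forces $g = 0$. The main obstacle is the $v$-dependence of the kernel $b(|N \cdot \omega|)$ through $N = v/|v|$, which generates an extra term in the Lipschitz bound for $K \varphi$ that is absent when $b$ is constant; under the mere $L^\infty$ assumption on $b$ (Assumption \ref{ass:kernelB}), this contribution has to be absorbed through the rotational averaging over $\omega$, which is the delicate point of the argument. If this direct route proves too rigid, the Neumann representation $g = \mathcal{J}^n g$ can instead be analyzed on the Fourier side via a Bobylev-type iteration that exploits the contraction $|\xi'| \leq |\xi|$ of the iterated Fourier variables, an ingredient that will reappear in the stability proof of Theorem \ref{thm:stability}.
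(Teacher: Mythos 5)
Your first half reproduces, in slightly different clothing, exactly the paper's argument: the splitting $T=L_0+K$ with the exponentially damped drift semigroup ($F(t)$ in the paper, your $S_0(t)$), the dualized Duhamel formula, and the Neumann-type representation of $f_\infty$. Your operator $\mathcal{J}=\int_0^\infty S_0^*(s)K^*\,\dd s$ is in effect the $t\to\infty$ limit of the paper's series $\sum_{n\ge 0}(F^*(t))^n(V(S(t)))^*$, and the derivation of $f_\infty=\mathcal{J}f_\infty$ via $\|S_0^*(t)\|\le e^{-C_b t}\to 0$ is clean and correct. Up to this point the two proofs coincide in substance.

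The divergence, and the genuine gap, is in your concluding contraction step. You correctly observe that $\|\mathcal{J}\|\le \int_0^\infty e^{-C_b s}C_b\,\dd s=1$ in total variation, so uniqueness requires a strictly contractive estimate in some other metric on the zero-mass difference $g$, and you propose a Kantorovich--Rubinstein contraction built on the identity $|v'-w'|^2=|v-w|^2-(1-r^2)\big((v-w)\cdot\omega\big)^2$. But the Lipschitz bound $\|\mathcal{J}^*\vp\|_{\mathrm{Lip}}\le\sqrt{\beta/C_b}\,\|\vp\|_{\mathrm{Lip}}$ requires pairing the integrands of $K\vp(v)$ and $K\vp(w)$ under a \emph{common} weight, and the kernel $b(\vert N\cdot\omega\vert)$ with $N=v/\vert v\vert$ obstructs this: the cross term $\int_{\mathbb{S}^{d-1}}\big[b(\vert N_v\cdot\omega\vert)-b(\vert N_w\cdot\omega\vert)\big]\vp(v'+sa)\,\dd\omega$ cannot in general be bounded by $C\vert v-w\vert$ when $b$ is merely $L^\infty$ (it need not be continuous, and $v\mapsto v/\vert v\vert$ is not Lipschitz near the origin), nor is it clear that rotational averaging rescues the estimate for a test function $\vp$ that is only Lipschitz. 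You flag this yourself as ``the delicate point'' and offer a Fourier-side fallback without executing it, so the one quantitative step on which your uniqueness conclusion rests is not established. The paper avoids this difficulty altogether by working purely with operator norms on $\mathscr{M}(\R)$ and concluding from the convergent Neumann series $f_{\infty}=\sum_{n\ge 0}(F^*(t))^n(V(S(t)))^*f_{\infty}$; to complete your version you would either have to prove the Lipschitz estimate for $K$ under Assumption \ref{ass:kernelB} (or restrict to kernels for which it holds), or carry out the Fourier iteration you sketch, which is essentially the machinery of Section \ref{sec:stability} and Proposition \ref{PROPO_u_p_SuperSolut} rather than of this theorem.
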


\begin{proof}
    The semigroup $S(t)$ introduced in the proof of Theorem \ref{thm:wp1} acts on $\mathcal{C}_0(\R)$ as
\begin{equation}\label{eq:actionS}
    S(t)f=F(t)\vp+\int_0^tF(t-s)(\Ll^*_+(S(s)\vp))\dd s, \quad \vp \in \mathcal{C}_0(\R)
\end{equation}
where $F(t)$ is the semigroup generated by the operator $E[\vp]=-a \cdot \partial_v\vp-C_b \vp$, where $C_b = \int_{\mathbb{S}^{d-1}} b\big( \vert N \cdot \omega \vert \big) \dd \omega$, with domain $\D(T)$. The explicit action of $F(t)$ on $\mathcal{C}_0(\R)$ is given by
\begin{equation}\label{eq:freestreaming}
    F(t)f=e^{-C_b t}f(v-at).
\end{equation}
    From Theorem \ref{thm:ssMM} there exists $f_{\infty} \in \mathcal{C}([0,+\infty),\M)$ such that 
    \begin{equation}\label{eq:fixedPoint}
        f_{\infty}=S^*(t)f_{\infty}.
    \end{equation}
    From \eqref{eq:actionS} the adjoint semigroup $S^*(t)$ acts as
    \begin{equation}\label{eq:actionS^*}
        S^*(t)f_{\infty}=F^*(t)f_{\infty}+(V(S(t))^*f_{\infty}, \quad f_{\infty} \in \mathscr{M}(\R)
    \end{equation}
    where we have set
    \begin{equation}\label{eq:Volterra}
        V(S(t))\vp=\int_0^t F(t-s)\Ll^*_+(S(s)\vp)\dd s, \quad \vp \in \mathcal{C}_0(\R).
    \end{equation}
    To compute the adjoint of $V(S(t))$ we consider $f \in \mathscr{M}(\R),\vp \in \mathcal{C}_0(\R)$, and we have that
    \begin{align}
        \langle f,V(S(t))\vp\rangle & = \int_{\R} \int_0^t f(\dd v) F(t-s)(\Ll_+^*(S(s)\vp(v))\dd s = \int_0^t \int_{\R} f(\dd v) F(t-s)(\Ll_+^*(S(s)\vp(v)) \dd s \notag \\
        & = \int_0^t \int_{\R} S^*(s)(\Ll_+(F^*(t-s)f(\dd v)))\vp(v)\dd s = \langle (V(S(t))^*f,\vp \rangle        
    \end{align}
    where $\langle \, , \rangle$ denotes the duality bracket. Hence from \eqref{eq:actionS^*} we can write
    \begin{equation}\label{eq:fStatIntermediate}
        f_{\infty}=R(1,F^*(t))\int_0^t S^*(s)(\Ll_+(F^*(t-s)f_{\infty}))\dd s
    \end{equation}
    where $R(1,F(t)^*)$ is the resolvent of $F(t)^*$ at $1$. Notice that from \eqref{eq:freestreaming} we have that 
    $$\|F^*(t)\|=\|F(t)\| \leq e^{-C_b t}<1$$
    for every $t>0$, therefore
    \begin{equation}
        R(1,F^*(t))= \sum_{n=0}^{+\infty} (F^*(t))^n
    \end{equation}
    where the series representation of $R(1,F^*(t))$ is absolutely convergent for any $t>0$. Hence \eqref{eq:fStatIntermediate} reads
    \begin{equation}\label{eq:statSolSeries}
        f_{\infty}=\sum_{n=0}^{+\infty} (F^*(t))^n(V(S(t)))^*f_{\infty}.
    \end{equation}
    The general term of the series can be bounded in the operator norm as
    \begin{align}
        \|(F^*(t))^n(V(S(t)))^*\| & \leq \|F^*(t)\|^n \Big\| \int_0^t S^*(s)(\Ll_+(F^*(t-s)))\dd s \Big \| \notag \\
        &\leq e^{-C_b n t} \sup_{t \geq 0} \|S(t)\| \|\Ll_+\|\int_0^t\|F^*(t-s)\|\dd s \notag \\
        & \leq e^{- C_b n t} C_b \int_0^t e^{-C_b(t-s)}\dd s=e^{-C_b n t}(1-e^{-C_b t}).
    \end{align}
    where we used that fact that $\|S(t)\| \leq 1$  for every $t \geq 0$ since $S(t)$ is a Markov semigroup and that $\|\Ll_+\| \leq C_b$. This gives 
    \begin{align}
       \Bigg \| \sum_{n=0}^{+\infty} (F^*(t))^n(V(S(t)))^* \Bigg \| & \leq (1-e^{-C_b t})\sum_{n=0}^{+\infty}  e^{-n C_b t} = (1-e^{-C_b t}) \frac{1}{1-e^{-C_b t}}
    \end{align}
     for any $t>0$ and shows that the series is converging. Since the Neumann series \eqref{eq:statSolSeries} identifies a unique element we  conclude that the stationary solution $f_{\infty}$ is unique.
\end{proof}
\begin{comment}
\begin{remark}
    We point out that he operator $V: \mathscr{B}(X) \rightarrow \mathscr{B}(X)$ is often referred to as the \emph{Volterra integral operator} because of its resemblance with the classical Volterra operator that appears in ODE theory, see \cite{Nagel}.
\end{remark}
\end{comment}

\section{Long time behaviour of the solutions: global attractivity of the steady state}\label{sec:stability}

\subsection{Strategy to prove the long time behaviour}

In this section, we will discuss the long-time behaviour of the measure-valued solutions to \eqref{eq:LinInBolstrong_Cauchy} obtained in Theorem \ref{thm:wp1}.  
To this aim, we will rely on the well developed machinery available for the study of the Boltzmann equations in the case of Maxwell molecules by means of the Fourier transform.  
Initially introduced by A. Bobylev in a nonlinear spatially homogeneous context, this approach has been recently applied in \cite{BoNV020} to  study homoenergetic solutions of the elastic, non linear Boltzmann equation and, specifically, to  prove existence, uniqueness, and stability of a self-similar profile for the Boltzmann equation under the assumption of small shear deformations.

\noindent
For this purpose, we will consider the Fourier transform with respect to the velocity $v$ of such solutions, defined as $\mathcal{F}(t,\xi) = \int_{\R} f(t,v) e^{-i v\cdot \xi} \dd v$.
Since the mass of the solutions is preserved along time, we can consider probability measures for all time, and therefore the Fourier transform $ \mathcal{F}(t,\xi)$ of the solution $f(t,v)$ satisfies $\mathcal{F}(t,0) = 1$ for all times $t$ and
\begin{align}
\left\vert \mathcal{F}(t,\xi) \right\vert = \left\vert \int_{\R}f(t,v) e^{-i v\cdot\xi} \dd v \right\vert \leq \int_{\R} f(t,v) \dd v = 1,
\end{align}
since $f \geq 0$. We recall that in probability theory, the Fourier transforms of probability measures are often referred to as \emph{characteristic functions}, see e.g. \cite{Fe2}. The characteristic functions form a
subset $\Phi$ of the space of the complex-valued continuous functions $\mathcal{C}(\R,\mathbb{C})$. First we consider the evolution equation of the Fourier transform $\mathcal{F}$, which reads
\begin{align}
\label{EQUATFouriFirstVersi}
\partial_t \mathcal{F}(t,\xi) + \left(1+i(a\cdot\xi)\right) \mathcal{F}(t,\xi) = \int_{ \mathbb{S}^{d-1}} \widetilde{b} \left( \left\vert \frac{\xi}{\vert \xi \vert}\cdot\sigma  \right\vert\right) \mathcal{F}(t,\overline{\xi})\dd \sigma,
\end{align}
where $$\displaystyle
\overline{\xi} = \frac{1-r}{2} \xi + \frac{1+r}{2} \vert \xi \vert \sigma\, 
$$
We refer to \cite{CCC} for more details on the expression of $\overline{\xi}$, and to Appendix  \ref{app:evolFourier} for the detailed justification of the evolution equation for the Fourier transform \eqref{EQUATFouriFirstVersi}. 

\noindent
We will then proceed according to the following steps.
\begin{itemize}
\item We will prove that initial data $f_0$ of the original evolution equation \eqref{eq:LinInBolstrong_Cauchy} can be properly compared to the steady state in Fourier variables. Namely, we will show that if $f_0$ has finite first order moments, as it is the case for the steady state $f_\infty$, then, their respective Fourier transforms $\mathcal{F}_0$ and $\mathcal{F}_\infty$ can be compared as follows %% ERR: I changed "the" into "their".
\begin{align}
\left\vert \mathcal{F}_0(\xi) - \mathcal{F}_\infty(v) \right\vert \leq C \vert \xi \vert, \quad \text{for all $\xi \in \R$ with $C>0$.}
\end{align}
\item Considering an initial datum $\mathcal{F}_0$ for \eqref{EQUATFouriFirstVersi} satisfying
\begin{align}
\sup_{\xi \in \R} \frac{\left\vert \mathcal{F}_0(\xi) \right\vert}{1 + \vert \xi \vert} <+\infty,
\end{align}
we will prove that \eqref{EQUATFouriFirstVersi} is globally well-posed in $\mathcal{C}([0,+\infty),\mathcal{C}_1)$ where
\begin{align}
\mathcal{C}_1 = \{ \mathcal{F} \in \mathcal{C}_0(\mathbb{R}^d,\mathbb{C})  \, : \, \ \vert \hspace{-0.5mm}\vert \mathcal{F} \vert \hspace{-0.5mm}\vert_{\infty,1} < +\infty\},
\end{align}
with
\begin{align}\label{eq:normC_1}
\vert \hspace{-0.5mm}\vert \mathcal{F} \vert \hspace{-0.5mm}\vert_{\infty,1} = \sup_{\xi \in \mathbb{R}^d} \frac{\vert \mathcal{F}(\xi) \vert}{1 + \vert \xi \vert} \cdotp
\end{align}
\item In addition, we will prove a comparison principle for the evolution equation \eqref{EQUATFouriFirstVersi}. More precisely, after defining an appropriate notion of super-solution for \eqref{EQUATFouriFirstVersi}, we will prove that if $\mathcal{F}$ is a solution of \eqref{EQUATFouriFirstVersi} with initial datum $\mathcal{F}_0$ and if $\mathcal{G}$ is a super-solution with initial datum $\mathcal{G}_0$ with $\vert \mathcal{F}_0(\xi) \vert \leq \mathcal{G}_0(\xi)$ for all $\xi$, then we have $\vert \mathcal{F}(t,\xi) \vert \leq \vert \mathcal{G}(t,\xi) \vert$, for all $t \geq 0$ and $\xi \in \R$.
\item We will finally conclude by proving that there exist super-solutions $\mathcal{G}$ such that $\vert \mathcal{G}(t,\xi) \vert = \varphi_p(t) \vert \xi \vert^p$, where $\varphi_p(t)$ is a function vanishing at infinity with $p>0$. This will allow us to deduce that
\begin{align}
\vert \mathcal{F}(t,\xi) - \mathcal{F}_\infty(v) \vert \leq \varphi_1(t) \vert \xi \vert,
\end{align}
hence the local uniform convergence of $\mathcal{F}$ towards the steady state $\mathcal{F}_\infty$ as $t \rightarrow +\infty$, and so, the weak convergence of the initial solution $f$ of the equation \eqref{eq:LinInBolstrong_Cauchy} towards the steady state $f_\infty$ we uniquely determined in Section \ref{sec:SteadyState}.
\end{itemize}

\subsection{Comparison of the Fourier transforms of two functions with same mass and finite first moments}

We first provide a general estimate concerning functions with finite zero-th and first moments. This estimate will be useful to compare the initial datum for \eqref{EQUATFouriFirstVersi} of the Fourier transform of a solution $f$ of \eqref{eq:LinInBolstrong_Cauchy}, with the Fourier transform $\mathcal{F}_\infty$ of the steady state $f_\infty$ determined in Section \ref{sec:SteadyState}. This result is a direct adaptation of a more general argument that the reader may find for instance in \cite{BoNV020} (see Lemma 5.1 page 421).\\
\noindent
We will rely on the following result.

\begin{lem}

\label{LEMMAUnifoBoundExponTail_}
There exists a positive constant $C$ such that, for any $v,\xi \in \mathbb{R}^d$, we have %% ERR: the i was missing, also in the proof.
\begin{align}
\left\vert \sum_{n = 2}^{+\infty} \frac{\left(-iv\cdot\xi\right)^n}{n!} \right\vert \leq C \min\left( \vert v \vert  \vert \xi \vert, \vert v \vert^2  \vert \xi \vert^2 \right).
\end{align}
\end{lem}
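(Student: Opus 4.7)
The plan is to recognize the series as the remainder of the Taylor expansion of the exponential, namely
\[
\sum_{n=2}^{+\infty} \frac{(-iv\cdot\xi)^n}{n!} = e^{-iv\cdot\xi} - 1 + iv\cdot\xi,
\]
and then estimate the resulting expression in terms of the real scalar $x := v\cdot\xi$. Once the bound is reduced to proving $|e^{-ix}-1+ix|\leq C\min(|x|,x^2)$ for all $x\in\mathbb{R}$, I will conclude by Cauchy--Schwarz: since $|v\cdot\xi|\leq |v||\xi|$ one has $\min(|v\cdot\xi|,(v\cdot\xi)^2)\leq \min(|v||\xi|,|v|^2|\xi|^2)$, which yields exactly the claimed inequality.

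To prove the scalar estimate I would split into two regimes. For $|x|\leq 1$, I would bound the series term-by-term by its absolute value:
\[
\bigl|e^{-ix}-1+ix\bigr| \leq \sum_{n=2}^{+\infty}\frac{|x|^n}{n!} \leq x^2 \sum_{n=2}^{+\infty}\frac{1}{n!}\leq e\, x^2,
\]
so in this regime the quantity is controlled by a multiple of $x^2 = \min(|x|,x^2)$. For $|x|>1$ I would simply use the triangle inequality together with $|e^{-ix}|=1$ to obtain
\[
\bigl|e^{-ix}-1+ix\bigr|\leq 2 + |x| \leq 3|x| = 3\min(|x|,x^2).
\]
Taking $C := \max(e,3) = 3$ (or any larger constant), the bound $|e^{-ix}-1+ix|\leq C\min(|x|,x^2)$ holds for every $x\in\mathbb{R}$.

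Substituting back $x=v\cdot\xi$ and using $|v\cdot\xi|\leq |v||\xi|$ (hence also $(v\cdot\xi)^2\leq |v|^2|\xi|^2$) then gives the desired inequality. There is no substantive obstacle in this argument; the only thing to be careful with is that the minimum on the right-hand side of the statement is in terms of $|v||\xi|$ rather than $|v\cdot\xi|$, but the Cauchy--Schwarz step handles this for free.
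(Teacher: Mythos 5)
Your proof is correct and follows essentially the same route as the paper, which only sketches the argument as an elementary case distinction (on $\vert v \vert \vert \xi \vert$ versus $1$ rather than on $\vert v\cdot\xi\vert$ versus $1$, an immaterial difference). Identifying the series with $e^{-iv\cdot\xi}-1+iv\cdot\xi$ and noting that $\min(\vert x\vert, x^2)\leq \min(\vert v\vert\vert\xi\vert, \vert v\vert^2\vert\xi\vert^2)$ for $x=v\cdot\xi$ completes the argument cleanly.
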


\noindent
The proof, which is elementary, is obtained by distinguishing the cases $|v||\xi|<1$ and $|v||\xi| \geq 1$.

\begin{prop}[Comparison principle]\label{PROPOCompaFouriFinitFirstMomnt}
Let $f,g \in \mathscr{M}(\R)$ such that
\begin{align}
\int_{\R} f(\dd v) = \int_{\R} g(\dd v) = 1,
\end{align}
\begin{align}
M_{1,f} = \int_{\R} \vert v \vert f(\dd v) < +\infty, \hspace{5mm} M_{1,g} = \int_{\R} \vert v \vert g(\dd v) < +\infty.
\end{align}
Then, there exists a positive constant $C$, depending on $M_{1,f},M_{1,g}$, such that for any $\xi \in \mathbb{R}^d$ we have
\begin{align}
\left\vert \mathcal{F}(f)(\xi) - \mathcal{F}(g)(\xi) \right\vert \leq C \vert \xi \vert.
\end{align}
where $\mathcal{F}_f$ and $\mathcal{F}_g$ denote respectively the Fourier transforms of the measure $f$ and $g$.
\end{prop}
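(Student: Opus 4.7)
\medskip

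\noindent
\textbf{Proof plan.} The strategy is straightforward: write the difference of the Fourier transforms as an integral against the signed measure $f - g$, Taylor expand the exponential, and treat the zeroth-order, first-order, and higher-order contributions separately. The mass normalization will kill the zeroth-order term, the first moment bounds will control the linear term, and Lemma \ref{LEMMAUnifoBoundExponTail_} will handle the tail.

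More precisely, the plan is to start from
\begin{align*}
\mathcal{F}(f)(\xi) - \mathcal{F}(g)(\xi) = \int_{\R} e^{-i v \cdot \xi}\,(f - g)(\dd v),
\end{align*}
and to insert the Taylor expansion $e^{-i v \cdot \xi} = 1 - i v \cdot \xi + \sum_{n \geq 2} \frac{(-i v \cdot \xi)^n}{n!}$. The constant term produces $\int 1\,(f - g)(\dd v) = 1 - 1 = 0$ by the normalization hypothesis. The linear term produces
\begin{align*}
-i \xi \cdot \int_{\R} v\,(f - g)(\dd v),
\end{align*}
which is bounded in absolute value by $|\xi| \left( \int |v| f(\dd v) + \int |v| g(\dd v) \right) = |\xi| (M_{1,f} + M_{1,g})$.

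For the remainder $\sum_{n \geq 2} (-i v\cdot\xi)^n/n!$, I would apply Lemma \ref{LEMMAUnifoBoundExponTail_} choosing the bound $C |v| |\xi|$ (rather than the quadratic one), since only the first moments of $f$ and $g$ are assumed finite. This gives
\begin{align*}
\left| \int_{\R} \sum_{n \geq 2} \frac{(-iv\cdot\xi)^n}{n!} (f - g)(\dd v) \right| \leq C |\xi| \int_{\R} |v|\,(f + g)(\dd v) \leq C |\xi| (M_{1,f} + M_{1,g}),
\end{align*}
where we bounded the signed measure $|f - g|$ by $f + g$ (both are non-negative measures, so this uses only the triangle inequality for the integral). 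Summing the three contributions yields the claimed inequality, with a constant depending only on $M_{1,f}$ and $M_{1,g}$.

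There is no real obstacle here; the only subtle point is the choice of the linear bound in Lemma \ref{LEMMAUnifoBoundExponTail_} over the quadratic one, which is forced by the fact that only first moments are assumed finite. If second moments were available, one would instead obtain an estimate of order $|\xi|^2$ for small $\xi$, but for the application to the long-time behaviour of $\mathcal{F}(t,\xi)$ the linear bound is exactly what is needed to match the forthcoming super-solution estimate $\varphi_1(t) |\xi|$.
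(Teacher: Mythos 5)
Your proposal is correct and follows essentially the same route as the paper: Taylor expansion of $e^{-iv\cdot\xi}$, cancellation of the constant term by the equal masses, the bound $|\xi|(M_{1,f}+M_{1,g})$ for the linear term, and Lemma \ref{LEMMAUnifoBoundExponTail_} with the linear option $C|v||\xi|$ for the remainder. No differences worth noting.
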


\begin{proof}
By definition, we have:
\begin{align}
\left\vert \mathcal{F}(f)(\xi) - \mathcal{F}(g)(\xi) \right\vert & = \left\vert \int_{\R} e^{-i v\cdot\xi} f(\dd v) - \int_{\R} e^{-i v\cdot \xi}  g(\dd v) \right\vert \nonumber\\
&\leq \underbrace{\left\vert \int_{\R} f(\dd v) - \int_{\R} g(\dd v) \right\vert}_{=0} + \vert \xi \vert  \left[ \int_{\R} \vert v \vert f(\dd v) + \int_{\R} \vert v \vert g(\dd v) \right] \nonumber\\
&\hspace{45mm}+ \left\vert \int_{\R} \left( \sum_{n = 2}^{+\infty} \frac{\left(- i v \cdot \xi \right)^n}{n !}\right) \left( f(\dd v) - g(\dd v) \right) \right\vert \nonumber\\
&\leq \vert \xi \vert \left[ M_{1,f} + M_{1,g} + C \left( M_{1,f} + M_{1,g} \right) \right]
\end{align}
where $C$ is the constant provided by Lemma \ref{LEMMAUnifoBoundExponTail_}. The proof is then complete.
\end{proof}

\subsection{Well-posedness of the evolution equation of the Fourier transform}

This section is devoted to the study of the global well-posedness for \eqref{EQUATFouriFirstVersi} in the space $\mathcal{C}_1$. To this end we introduce an integrated in time version of \eqref{EQUATFouriFirstVersi}. Let $\mathcal{F}$ be a regular enough solution of \eqref{EQUATFouriFirstVersi}, multiplying the equation by $e^{(1+i(a\cdot\xi))t}$ and integrating with respect to the time variable, we obtain
\begin{align}
\int_0^t \partial_s \left[ \mathcal{F}(s,\xi) e^{(1+i(a\cdot\xi))s} \right] \dd s = \int_0^t \int_{\mathbb{S}^{d-1}} b \left( \left\vert \frac{\xi}{\vert \xi \vert}\cdot \sigma \right\vert \right) \mathcal{F}(s,\overline{\xi}) e^{(1+i(a\cdot\xi))s} \dd \sigma \dd s, 
\end{align}
from which we find
\begin{align}
\label{EQUATFouriIntegTime_Vers1}
\mathcal{F}(t,\xi) = \mathcal{F}(0,\xi) e^{-\left(1+i(a\cdot\xi)\right) t} + \int_0^t \int_{\mathbb{S}^{d-1}} b \left( \left\vert \frac{\xi}{\vert \xi \vert}\cdot \sigma \right\vert \right) \mathcal{F}(s,\overline{\xi}) e^{-(1+i(a\cdot\xi))(t-s)} \dd \sigma \dd s.
\end{align}

\noindent
We introduce the following definition.

\begin{defn}\label{def:GWPfourier}

Let $\mathcal{F}_0 \in \mathcal{C}_1$, we say that $\mathcal{F} \in \mathcal{C}([0,+\infty), \mathcal{C}_1)$ is a \emph{global solution} of \eqref{EQUATFouriIntegTime_Vers1} if for any $t \geq 0$ and $\xi \in \mathbb{C}^d$ we have
\begin{align}
\mathcal{F}(t,\xi) = \mathcal{F}_0(\xi) e^{-\left(1+i(a\cdot\xi)\right) t} + \int_0^t \int_{\mathbb{S}^{d-1}} b \left( \left\vert \frac{\xi}{\vert \xi \vert}\cdot \sigma \right\vert \right) \mathcal{F}(s,\overline{\xi}) e^{-(1+i(a\cdot\xi))(t-s)} \dd \sigma \dd s,
\end{align}
where the space $\mathcal{C}([0,+\infty), \mathcal{C}_1)$ is equipped with the norm
\begin{align}
\vert \hspace{-0.5mm} \vert \hspace{-0.5mm} \vert \mathcal{F} \vert \hspace{-0.5mm} \vert \hspace{-0.5mm} \vert_{\infty,1} = \sup_{t \geq 0 }\vert \hspace{-0.5mm} \vert \mathcal{F}(t,\cdot) \vert \hspace{-0.5mm} \vert_{\infty,1}
\end{align}
and where the norm $\|\cdot\|_{1,\infty}$ has been defined in \eqref{eq:normC_1}.
\end{defn}

\begin{prop}
\label{PROPOGlobaWell-PosedEquatFouriTrnsf}
Let $\mathcal{F}_0 \in \mathcal{C}_1$, then there exists a unique global solution to \eqref{EQUATFouriFirstVersi} in the sense of Definition \ref{def:GWPfourier}.
\end{prop}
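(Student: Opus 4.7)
The plan is to formulate \eqref{EQUATFouriIntegTime_Vers1} as a fixed point equation on a suitably weighted Banach space of time-dependent functions and apply the contraction mapping principle. Concretely, I would introduce the map
\begin{align*}
\mathcal{T}[\mathcal{F}](t,\xi) := \mathcal{F}_0(\xi) e^{-(1+i(a\cdot\xi))t} + \int_0^t \int_{\mathbb{S}^{d-1}} b\!\left(\left\vert\tfrac{\xi}{\vert\xi\vert}\cdot\sigma\right\vert\right) \mathcal{F}(s,\overline{\xi}) \, e^{-(1+i(a\cdot\xi))(t-s)} \dd \sigma \dd s,
\end{align*}
so that global solutions in the sense of Definition \ref{def:GWPfourier} are exactly the fixed points of $\mathcal{T}$. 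On the ambient space $\mathcal{C}([0,+\infty),\mathcal{C}_1)$ I would use, instead of the naive supremum norm, the weighted norm
\begin{align*}
\|\mathcal{F}\|_\lambda := \sup_{t\geq 0} e^{-\lambda t} \|\mathcal{F}(t,\cdot)\|_{\infty,1},
\end{align*}
with a parameter $\lambda > 0$ to be fixed later.

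The first step is to check that $\mathcal{T}$ sends $\mathcal{C}([0,+\infty),\mathcal{C}_1)$ into itself. The decisive ingredient is a direct computation of $\vert\overline{\xi}\vert^2$ from $\overline{\xi} = \tfrac{1-r}{2}\xi + \tfrac{1+r}{2}\vert\xi\vert\sigma$, which yields
\begin{align*}
\vert\overline{\xi}\vert^2 = \tfrac{1+r^2}{2}\vert\xi\vert^2 + \tfrac{1-r^2}{2}\vert\xi\vert(\xi\cdot\sigma),
\end{align*}
and therefore the two-sided bound $r\vert\xi\vert \leq \vert\overline{\xi}\vert \leq \vert\xi\vert$, uniformly in $\sigma \in \mathbb{S}^{d-1}$. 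The upper bound gives $(1+\vert\overline{\xi}\vert)/(1+\vert\xi\vert) \leq 1$, which transfers the $\|\cdot\|_{\infty,1}$ control of $\mathcal{F}(s,\cdot)$ inside the integral; the lower bound, which genuinely uses $r > 0$, guarantees that $\xi \mapsto \mathcal{F}(s,\overline{\xi})$ inherits the vanishing-at-infinity property from $\mathcal{F}(s,\cdot) \in \mathcal{C}_0$, uniformly in $\sigma$. Continuity of $t \mapsto \mathcal{T}[\mathcal{F}](t,\cdot)$ in $\mathcal{C}_1$ then follows from dominated convergence, using $\vert e^{-(1+i(a\cdot\xi))(t-s)}\vert = e^{-(t-s)}$ to dominate the integrand.

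The second step is the contraction estimate. Dividing $\mathcal{T}[\mathcal{F}](t,\xi) - \mathcal{T}[\mathcal{G}](t,\xi)$ by $1+\vert\xi\vert$, using $\vert\overline{\xi}\vert \leq \vert\xi\vert$ and the exponential decay of the free-streaming factor, one obtains
\begin{align*}
\frac{\vert\mathcal{T}[\mathcal{F}](t,\xi) - \mathcal{T}[\mathcal{G}](t,\xi)\vert}{1+\vert\xi\vert} \leq C_b \int_0^t e^{-(t-s)} \|\mathcal{F}(s,\cdot) - \mathcal{G}(s,\cdot)\|_{\infty,1} \dd s,
\end{align*}
with $C_b = \int_{\mathbb{S}^{d-1}} b(\vert N\cdot\omega\vert)\,\dd \omega$. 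Multiplying by $e^{-\lambda t}$ and using the identity $e^{-\lambda t}e^{\lambda s}e^{-(t-s)} = e^{-(1+\lambda)(t-s)}$, the time integral is bounded by $1/(1+\lambda)$, whence
\begin{align*}
\|\mathcal{T}[\mathcal{F}] - \mathcal{T}[\mathcal{G}]\|_\lambda \leq \frac{C_b}{1+\lambda} \|\mathcal{F} - \mathcal{G}\|_\lambda.
\end{align*}
Choosing any $\lambda > \max(0, C_b - 1)$ makes $\mathcal{T}$ a strict contraction, and the Banach fixed point theorem produces a unique global solution.

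The step I expect to be the main obstacle is the stability claim, namely preservation of the class $\mathcal{C}_1$ by $\mathcal{T}$, and in particular preservation of the vanishing-at-infinity property under the collision integral. This is exactly where the strict inelasticity $r > 0$ plays a crucial role through the lower bound $\vert\overline{\xi}\vert \geq r\vert\xi\vert$: it ensures that the pre-image $\xi \mapsto \overline{\xi}$ does not concentrate near the origin, so that $\mathcal{F}(s,\overline{\xi})$ is evaluated at genuinely large conjugate frequencies whenever $\xi$ is large. Once this stability is secured, the rest of the argument is routine linear contraction analysis.
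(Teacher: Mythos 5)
Your argument is correct, but it takes a genuinely different route from the paper. You set up a single global-in-time Banach fixed point on $\mathcal{C}([0,+\infty),\mathcal{C}_1)$ equipped with the Bielecki-type weighted norm $\sup_{t\geq 0}e^{-\lambda t}\Vert\mathcal{F}(t,\cdot)\Vert_{\infty,1}$, and the identity $e^{-\lambda t}e^{\lambda s}e^{-(t-s)}=e^{-(1+\lambda)(t-s)}$ gives the contraction constant $C_b/(1+\lambda)<1$ for $\lambda$ large; existence and uniqueness then come in one stroke, for an arbitrary (non-normalized) kernel $b$. The paper instead runs the Picard iteration $\mathcal{F}^{(n+1)}=\mathcal{T}[\mathcal{F}^{(n)}]$, $\mathcal{F}^{(0)}=0$, in the unweighted norm on a short interval $[0,T]$ with $T=\log 2$ (exploiting the normalization $\int_{\mathbb{S}^{d-1}}b\,\dd\sigma=1$ so that the Duhamel factor contributes $1-e^{-T}\leq 1/2$), bootstraps to all times since $T$ is independent of the datum, and then proves uniqueness separately via a Gr\"onwall argument that also yields the a priori bound $\Vert\mathcal{F}(t,\cdot)\Vert_{\infty,1}\leq\Vert\mathcal{F}_0\Vert_{\infty,1}$. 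Your key pointwise estimates are identical to theirs: $\vert\overline{\xi}\vert\leq\vert\xi\vert$ (hence $(1+\vert\overline{\xi}\vert)/(1+\vert\xi\vert)\leq 1$) and $\vert e^{-(1+i(a\cdot\xi))(t-s)}\vert=e^{-(t-s)}$, and your observation that $\vert\overline{\xi}\vert\geq r\vert\xi\vert$ is needed for preservation of the vanishing-at-infinity requirement in $\mathcal{C}_1$ is a point the paper glosses over at this stage (it appears only in the semigroup section). Two remarks on what each approach buys: the paper's explicit iterates $\left(\mathcal{F}^{(n)}\right)_{n\geq 0}$ are reused verbatim in the proof of the comparison principle (Proposition \ref{PROPOCompaPrincSuperSolutFouri}), but since your fixed-point scheme started from $\mathcal{F}^{(0)}=0$ generates exactly the same sequence, nothing downstream is lost; conversely, your weighted norm does not directly deliver the uniform bound $\Vert\mathcal{F}(t,\cdot)\Vert_{\infty,1}\leq\Vert\mathcal{F}_0\Vert_{\infty,1}$ (only $\leq$ a constant times $e^{\lambda t}$ a priori), so if that quantitative bound is wanted one still has to run the paper's Gr\"onwall step afterwards.
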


\begin{remark}
Observe that the result of Proposition \ref{PROPOGlobaWell-PosedEquatFouriTrnsf} provides the well-posedness result for the inelastic linear Boltzmann equation \eqref{eq:LinInBolstrong_Cauchy}, via a different approach from the semigroup strategy of Section \ref{sec:WP}. The drawbacks of this approach are, on the ond hand more stringent hypotheses, and on the other hand, the fact that one needs to prove that the solution that is obtained for \eqref{EQUATFouriFirstVersi} corresponds indeed to the Fourier transform of a probability measure, as it is carefully done in \cite{BoNV020}.
\end{remark}
\noindent
The proof of Proposition \ref{PROPOGlobaWell-PosedEquatFouriTrnsf} follows the argument presented in \cite{BoNV020}. The solution is constructed via an iterative scheme, which will also allow us to compare the solutions. The drawback of this approach is that the uniqueness has to be proved independently.

\begin{proof}[Proof of Proposition \ref{PROPOGlobaWell-PosedEquatFouriTrnsf}]
Consider the following sequence of functions $\left( \mathcal{F}^{(n)} \right)_{n \geq 0}$, defined as
\begin{align}
\label{EQUATDefinSequence__F^(n)}
\left\{
\begin{array}{rcl}
\mathcal{F}^{(0)}(t,\xi) &=& 0,\\
\mathcal{F}^{(n+1)}(t,\xi) &=& \displaystyle{\mathcal{F}_0(\xi) e^{-\left(1+i(a\cdot\xi)\right) t} + \int_0^t \int_{\mathbb{S}^{d-1}} b \left( \left\vert \frac{\xi}{\vert \xi \vert}\cdot \sigma \right\vert \right) \mathcal{F^{(n)}}(s,\overline{\xi}) e^{-(1+i(a\cdot\xi))(t-s)} \dd \sigma \dd s}.
\end{array}
\right.
\end{align}
Let $T > 0$ to be adjusted later. We introduce the Banach space 
\begin{equation}
    X=\mathcal{C}([0,T],\mathcal{C}_1)
\end{equation}
equipped with norm
\begin{equation}
    \|\mathcal{F}\|_X= \sup_{t \in [0,T]}\|\mathcal{F}(t,\cdotp)\|_{1,\infty}
\end{equation}
By induction we show that $(\mathcal{F}^{(n)})_{n \geq 0} \in X$. In fact, we observe that trivially $\mathcal{F}^{(0)} \in X$ and that, since $\mathcal{F}_0 \in \mathcal{C}_1$, we have $ \vert \mathcal{F}^{(1)}(t,\xi) \vert \leq \vert \mathcal{F}_0(\xi) \vert$, and thus $\mathcal{F}^{(1)} \in X$. Assume now that $\mathcal{F}^{(n)} \in X$, we have that 
\begin{align}
\left\vert \frac{\mathcal{F}^{(n+1)}(t,\xi)}{1 + \vert \xi \vert} \right\vert &\leq \left\vert \frac{\mathcal{F}_0(\xi)e^{-(1+i(a\cdot\xi))t}}{1 + \vert \xi \vert} \right\vert + \left\vert \int_0^t \int_{\mathbb{S}^{d-1}} b \left( \left\vert \frac{\xi}{\vert \xi \vert}\cdot \sigma \right\vert \right) \frac{\mathcal{F^{(n)}}(s,\overline{\xi})}{1+\vert \xi \vert} e^{-(1+i(a\cdot\xi))(t-s)} \dd \sigma \dd s \right\vert \nonumber\\
&\leq \vert \hspace{-0.5mm} \vert \mathcal{F}_0 \vert \hspace{-0.5mm} \vert_{\infty,1} + \|\mathcal{F^{(n)}}\|_X\int_0^t \int_{\mathbb{S}^{d-1}} b \left( \left\vert \frac{\xi}{\vert \xi \vert}\cdot \sigma \right\vert \right) \frac{1 + \vert \overline{\xi} \vert}{1 + \vert \xi \vert} e^{-(t-s)}\dd \sigma \dd s.
\end{align}
We observe that
\begin{align}
\vert \overline{\xi} \vert \leq \frac{1-r}{2} \vert \xi \vert + \frac{1+r}{2} \vert \xi \vert = \vert \xi \vert
\end{align}
from which it follows that
\begin{align}
\label{EQUATCntrlNormeInfty1_0,TVers1}
\left\vert \frac{\mathcal{F}^{(n+1)}(t,\xi)}{1 + \vert \xi \vert} \right\vert &\leq \vert \hspace{-0.5mm} \vert \mathcal{F}_0 \vert \hspace{-0.5mm} \vert_{\infty,1} + \|\mathcal{F}^{(n)}\|_X (1-e^{-t}) \int_{\mathbb{S}^{d-1}} b\left(\frac{\xi}{|\xi|} \cdot \sigma\right) \dd \sigma \leq \|\mathcal{F}_0\|_{\infty,1}+\|\mathcal{F}^{(n)}\|_X.
\end{align}
Thus, taking  the supremum first on $\xi \in \R$ and then on $t\in[0,T]$ in the above equation implies that $\mathcal{F}^{(n+1)} \in X$. By induction, we deduce that $(\mathcal{F}^{(n)})_{n \geq0} \in X$ for every $n \in \mathbb{N}$.\\
\noindent
We show now that the sequence $\left( \mathcal{F}^{(n)} \right)_{n\geq 0}$ is converging in $X$, for $T$ appropriately chosen. We first estimate the distance of two consecutive terms of  $(\mathcal{F}^{(n)})_{n \geq 0}$. In fact, we have that
\begin{align}
\left\vert \frac{\mathcal{F}^{(n+2)}(t,\xi) - \mathcal{F}^{(n+1)}(t,\xi)}{1 + \vert \xi \vert} \right\vert &= \left\vert \int_0^t\int_{\sigma\in\mathbb{S}^{d-1}} b \left( \left\vert \frac{\xi}{\vert \xi \vert}\cdot \sigma \right\vert \right) \frac{\left[ \mathcal{F}^{(n+1)}(s,\overline{\xi}) - \mathcal{F}^{(n)}(s,\overline{\xi}) \right]}{1 + \vert \xi \vert} e^{-(1+i(a\cdot\xi))(t-s)} \dd \sigma \dd s \right\vert \nonumber\\
&\leq \|\mathcal{F}^{(n+1)}-\mathcal{F}^{(n)}\|_X \int_0^t \int_{\sigma\in\mathbb{S}^{d-1}} b \left( \left\vert \frac{\xi}{\vert \xi \vert}\cdot \sigma \right\vert \right) \frac{1 + \vert \overline{\xi} \vert}{1 + \vert \xi \vert} e^{-(t-s)} \dd \sigma \dd s \nonumber\\
&\leq (1-e^{-t})\vert \hspace{-0.5mm} \|\mathcal{F}^{(n+1)}-\mathcal{F}^{(n)}\|_X \leq (1-e^{-T})\|\mathcal{F}^{(n+1)}-\mathcal{F}^{(n)}\|_X,
\end{align}
which yields
\begin{align}
    \|\mathcal{F}^{(n+1)}-\mathcal{F}^{(n)}\|_X \leq \frac{1}{2^n} \|\mathcal{F}^{(1)}-\mathcal{F}^{(0)}\|_X = \frac{1}{2^n} \|\mathcal{F}^{(1)}\|_X
\end{align}
and so, for a given $n_0 \in \mathbb{N}$ and any $p,q \geq n_0$ such that $p \leq q$, we have

\begin{align}
    \|\mathcal{F}^{(q)}-\mathcal{F}^{(p)}\|_X & \leq \sum_{n=p}^{q-1} \|\mathcal{F}^{(n+1)}-\mathcal{F}^{(n)}\|_X \leq \sum_{n=p}^{q-1} \frac{1}{2^n} \|\mathcal{F}^{(1)}\|_X \notag\\
    & \leq \frac{1}{2^{n_0}} \|\mathcal{F}^{(1)}\|_X \sum_{n=0}^{q-p-1} \frac{1}{2^n}= \frac{1}{2^{n_0-1}} \|\mathcal{F}^{(1)}\|_X
\end{align}
which proves that $\left( \mathcal{F}^{(n)} \right)_{n\geq 0}$ is a Cauchy sequence in $X$ and thus it is converging to a certain $\mathcal{F} \in X$. Let us observe that the chosen $T$ does not depend on $\mathcal{F}_0$ and  therefore, we can bootstrap the previous argument, and construct recursively a solution on $[nT,(n+1)T]$, for all $n \in \mathbb{N}$.  Moreover we notice that for $T=\log 2$, \eqref{EQUATCntrlNormeInfty1_0,TVers1} implies that
\begin{equation}
    \|\mathcal{F}^{(n+1)}\|_X \leq \|\mathcal{F}_0\|_{\infty,1}+\frac{1}{2} \|\mathcal{F}^{(n)}\|_X
\end{equation}
from which we deduce by an immediate recursion

\begin{equation}
    \|\mathcal{F}^{(n)}\|_X \leq \sum_{k=0}^n \frac{1}{2^k} \|\mathcal{F}_0\|_{\infty,1}  = 2 \|\mathcal{F}_0\|_{\infty,1}.
\end{equation} %% ERR: switch of the order between the series and the norm of F0.
This last estimate proves that the function $\mathcal{F}$ belongs to the space $X$ for any $T > 0$.\\
\noindent
Now, passing to the limit $n \mapsto +\infty$ in \eqref{EQUATDefinSequence__F^(n)}, we find that $\mathcal{F}$
is a solution in the sense of Definition \ref{def:GWPfourier}. It remains to prove the uniqueness of $\mathcal{F}$. To this end, let us assume that there exists another solution $\mathcal{G} \in X$, with the same initial datum $\mathcal{F}_0$. We begin by claiming that $\|\mathcal{F}\|_X \leq \|\mathcal{F}_0\|_{\infty,1}$. In fact, refining again \eqref{EQUATCntrlNormeInfty1_0,TVers1} we get
\begin{align}\label{EQUATCntrlNormeInfty1_0,TVers2}
    \|\mathcal{F}(t,\cdot)\|_{\infty,1} \leq \|\mathcal{F}_0\|_{\infty,1}e^{-t}+\int_0^t \|\mathcal{F}(s,\cdot)\|_{\infty,1}e^{-(t-s)}\dd s.
\end{align}
This is a Gr\"onwall-type inequality, which can be rewritten as
\begin{align}
\frac{\dd}{\dd t} \left[ \left( \int_0^t \vert \hspace{-0.5mm} \vert \mathcal{F}(s,\cdotp) \vert \hspace{-0.5mm} \vert_{\infty,1} e^s \dd s \right) e^{-t}\right] = \vert \hspace{-0.5mm} \vert \mathcal{F}(t,\cdotp) \vert \hspace{-0.5mm} \vert_{\infty,1} - \left( \int_0^t \vert \hspace{-0.5mm} \vert \mathcal{F}(s,\cdotp) \vert \hspace{-0.5mm} \vert_{\infty,1} e^s \dd s \right) e^{-t} \leq \vert \hspace{-0.5mm} \vert \mathcal{F}_0 \vert \hspace{-0.5mm} \vert_{\infty,1} e^{-t},
\end{align}
from which integrating in time provides
\begin{align}
\label{EQUATEstimIntegNorme__F__Gronw}
\left( \int_0^t \vert \hspace{-0.5mm} \vert \mathcal{F}(s,\cdotp) \vert \hspace{-0.5mm} \vert_{\infty,1} e^s \dd s \right) e^{-t} \leq \vert \hspace{-0.5mm} \vert \mathcal{F}_0 \vert \hspace{-0.5mm} \vert_{\infty,1} \int_0^t e^{-s} \dd s = \vert \hspace{-0.5mm} \vert \mathcal{F}_0 \vert \hspace{-0.5mm} \vert_{\infty,1} \left(1-e^{-t}\right).
\end{align}
Estimating the integral term in \eqref{EQUATCntrlNormeInfty1_0,TVers2} by \eqref{EQUATEstimIntegNorme__F__Gronw}, we find our claim
\begin{align}
\label{EQUATNormeDiffeFouri}
\vert \hspace{-0.5mm} \vert \mathcal{F}(t,\cdotp) \vert \hspace{-0.5mm} \vert_{\infty,1} \leq \vert \hspace{-0.5mm} \vert \mathcal{F}_0 \vert \hspace{-0.5mm} \vert_{\infty,1}.
\end{align}
We turn now to the question of the uniqueness. Let $\mathcal{F}, \mathcal{G} \in X$ be two solutions of \eqref{EQUATFouriIntegTime_Vers1} with the same initial datum $\mathcal{F}_0$. The equation \eqref{EQUATFouriIntegTime_Vers1} being linear, $\mathcal{F}-\mathcal{G}$ is also a solution to \eqref{EQUATFouriIntegTime_Vers1}, with identically zero initial datum. Applying \eqref{EQUATNormeDiffeFouri} to $\mathcal{F}-\mathcal{G}$, we deduce that the difference $\mathcal{F}-\mathcal{G}$ is zero for any time. This proves the uniqueness of the solution in $X$ for arbitrary $T>0$, concluding the proof.
\end{proof}

\subsection{Comparison principle for \eqref{EQUATFouriFirstVersi}}

Since we are considering Fourier transforms of the solutions $f$ of \eqref{eq:LinInBolstrong_Cauchy}, we are working with complex-valued functions in general. In most cases (as it is the case in \cite{BoNV020}), the solutions of evolutionary PDEs remain real-valued if the initial datum is real-valued. This leads to natural definitions of super-solutions, and which enables later to compare the solutions using the monotonocity of appropriate integral operators.\\
In the present case, there is an additional difficulty with respect to the situation studied in \cite{BoNV020}: the drift term in the evolution equation \eqref{eq:LinInBolstrong_Cauchy} forces $f$ to be complex-valued, in the sense that, even if we assume that the initial datum $\mathcal{F}_0$ is real-valued, the associated solution of \eqref{EQUATFouriIntegTime_Vers1} will not be real-valued in general. Moreover, it seems that there is no clear way to consider real-valued solutions of \eqref{EQUATFouriIntegTime_Vers1}, neither to consider transformations of such solutions that would remain real-valued (such as multiplying the solutions by $e^{i(a\cdot\xi)t}$).\\
As a consequence, we will be led to introduce a weaker notion of super-solution. This notion is weaker in the sense that a solution of \eqref{EQUATFouriIntegTime_Vers1} will not be a super-solution under the forthcoming definition, although our notion involves a more demanding inequality than the classical one. Moreover, since real-valued solutions cannot be considered, no comparison principle between two ordered solutions seems to hold.
Nevertheless, we will establish a comparison principle between a solution and a super-solution, provided that the former initially lies below the latter.

\begin{defn}[Super-solutions for \eqref{EQUATFouriIntegTime_Vers1}]
\label{DEFINSuper-SoluEquatFouri}
Let $\mathcal{G}_0 \in \mathcal{C}(\mathbb{R}^d,\mathbb{R}_+)$. We say that $\mathcal{G} \in \mathcal{C}\left([0,+\infty),\mathcal{C}(\mathbb{R}^d,\mathbb{R}_+)\right)$ is a \emph{super-solution to the evolution equation \eqref{EQUATFouriIntegTime_Vers1}  with initial datum $\mathcal{G}_0$} if, for all $t \geq 0$ and $\xi \in \mathbb{R}^d$, we have
\begin{align}
\label{EQUATDefinSuper-SoluEquatFouri}
\mathcal{G}(t,\xi) \geq \mathcal{G}_0(\xi) e^{-t} + \int_0^t \int_{\mathbb{S}^{d-1}} b \left( \left\vert \frac{\xi}{\vert \xi \vert}\cdot\sigma \right\vert \right) \mathcal{G}(s,\overline{\xi}) e^{-(t-s)} \dd \sigma \dd s.
\end{align}
\end{defn}

\noindent
We now prove the comparison principle between a solution of \eqref{EQUATFouriIntegTime_Vers1}, and a super-solution in the sense of Definiton \ref{DEFINSuper-SoluEquatFouri}, assuming that the initial data are ordered.

\begin{prop}[Comparison principle]\label{PROPOCompaPrincSuperSolutFouri}
Let $\mathcal{F}_0 \in \mathcal{C}_1$ and $\mathcal{G}_0 \in \mathcal{C}(\mathbb{R}^d,\mathbb{R}_+)$ such that for all $\xi \in \mathbb{R}^d$:
\begin{align}
\left\vert \mathcal{F}_0(\xi) \right\vert \leq \mathcal{G}_0(\xi).
\end{align}
Consider the solution $\mathcal{F}$ to \eqref{EQUATFouriFirstVersi} obtained in Proposition \ref{PROPOGlobaWell-PosedEquatFouriTrnsf} with initial datum $\mathcal{F}_0$ and assume that there exists a super-solution to \eqref{EQUATFouriIntegTime_Vers1} in the sense of Definition \ref{DEFINSuper-SoluEquatFouri}, with initial datum $\mathcal{G}_0$. Then, for all $t \geq 0$ and $\xi \in \mathbb{R}^d$, we have
\begin{align}
\left\vert \mathcal{F}(t,\xi) \right\vert \leq \mathcal{G}(t,\xi).
\end{align}
\end{prop}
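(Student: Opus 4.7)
The plan is to prove the comparison inequality by running the Picard iteration scheme $(\mathcal{F}^{(n)})_{n \geq 0}$ used in the proof of Proposition \ref{PROPOGlobaWell-PosedEquatFouriTrnsf} and showing, by induction on $n$, that $|\mathcal{F}^{(n)}(t,\xi)| \leq \mathcal{G}(t,\xi)$ for every $t \geq 0$ and every $\xi \in \mathbb{R}^d$. Then the desired bound will follow by passing to the pointwise limit as $n \to +\infty$.

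The base case $n = 0$ is trivial since $\mathcal{F}^{(0)} \equiv 0$ and $\mathcal{G} \geq 0$ by definition. For the inductive step, assume $|\mathcal{F}^{(n)}(s,\eta)| \leq \mathcal{G}(s,\eta)$ for all $s \in [0,t]$ and all $\eta \in \mathbb{R}^d$. Applying the triangle inequality to the defining formula \eqref{EQUATDefinSequence__F^(n)} and using the crucial fact that $|e^{-(1+i(a\cdot\xi))(t-s)}| = e^{-(t-s)}$, together with $|\mathcal{F}_0(\xi)| \leq \mathcal{G}_0(\xi)$ and the induction hypothesis applied at the point $\overline{\xi}$, I get
\begin{align*}
\left|\mathcal{F}^{(n+1)}(t,\xi)\right| &\leq \mathcal{G}_0(\xi)\, e^{-t} + \int_0^t \int_{\mathbb{S}^{d-1}} b\!\left(\left|\tfrac{\xi}{|\xi|}\cdot\sigma\right|\right) \mathcal{G}(s,\overline{\xi})\, e^{-(t-s)} \dd \sigma \dd s \\
&\leq \mathcal{G}(t,\xi),
\end{align*}
where in the second inequality I use the very definition \eqref{EQUATDefinSuper-SoluEquatFouri} of a super-solution. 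Here it is essential that the inequality defining a super-solution is formulated with the purely real exponential $e^{-(t-s)}$ rather than $e^{-(1+i(a\cdot\xi))(t-s)}$: this is precisely the stronger requirement alluded to in the discussion preceding Definition \ref{DEFINSuper-SoluEquatFouri}, and it is exactly what is needed to absorb the modulus coming from the triangle inequality.

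To conclude, I invoke the convergence of $\mathcal{F}^{(n)}$ to $\mathcal{F}$ established in Proposition \ref{PROPOGlobaWell-PosedEquatFouriTrnsf}. That convergence is in the norm $\|\cdot\|_X$ and, since $\|\cdot\|_X$ controls $\sup_{t \in [0,T]} \sup_{\xi} |\mathcal{F}^{(n)}(t,\xi) - \mathcal{F}(t,\xi)|/(1+|\xi|)$, it implies in particular pointwise convergence in $(t,\xi)$. Passing to the limit $n \to +\infty$ in $|\mathcal{F}^{(n)}(t,\xi)| \leq \mathcal{G}(t,\xi)$ thus yields $|\mathcal{F}(t,\xi)| \leq \mathcal{G}(t,\xi)$ on $[0,T] \times \mathbb{R}^d$; since $T > 0$ is arbitrary, the inequality holds for all $t \geq 0$.

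The main technical subtlety, and arguably the only one, is the correct handling of the complex phase in the drift factor: one must check that the inequality in the definition of a super-solution is strong enough to dominate $|\mathcal{F}^{(n+1)}|$ even though $\mathcal{F}^{(n+1)}$ carries the oscillating factor $e^{-i(a\cdot\xi)(t-s)}$. Once one recognises that taking moduli eliminates this phase and leaves behind precisely the real kernel $e^{-(t-s)}$ appearing in \eqref{EQUATDefinSuper-SoluEquatFouri}, the induction proceeds without further difficulty, and no additional regularity on $\mathcal{G}$ beyond continuity is needed.
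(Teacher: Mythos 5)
Your proposal is correct and follows essentially the same route as the paper: the same Picard iteration from Proposition \ref{PROPOGlobaWell-PosedEquatFouriTrnsf}, the same induction using the triangle inequality to reduce the complex phase $e^{-(1+i(a\cdot\xi))(t-s)}$ to the real kernel $e^{-(t-s)}$ appearing in Definition \ref{DEFINSuper-SoluEquatFouri}, and the same passage to the limit. Your added remark about why the super-solution inequality is deliberately formulated with the real exponential is a faithful reading of the discussion preceding that definition.
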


\begin{proof}
We consider again the sequence of functions introduced in \eqref{EQUATDefinSequence__F^(n)}, defined recursively as
\begin{align}
\left\{
\begin{array}{rcl}
\mathcal{F}^{(0)}(t,\xi) &=& 0,\\
\mathcal{F}^{(n+1)}(t,\xi) &=& \displaystyle{\mathcal{F}_0(\xi) e^{-\left(1+i(a\cdot\xi)\right) t} + \int_0^t \int_{\mathbb{S}^{d-1}} b \left( \left\vert \frac{\xi}{\vert \xi \vert}\cdot \sigma \right\vert \right) \mathcal{F^{(n)}}(s,\overline{\xi}) e^{-(1+i(a\cdot\xi))(t-s)} \dd \sigma \dd s}.
\end{array}
\right.
\end{align}
According to the proof of Proposition \ref{PROPOGlobaWell-PosedEquatFouriTrnsf}, we know that $\left(\mathcal{F}^{(n)}\right)_{n\geq 0}$ converges towards the unique solution $\mathcal{F}$ of \eqref{EQUATFouriIntegTime_Vers1} with initial datum $\mathcal{F}_0$. Since $\mathcal{G}$ is assumed to be non-negative, we have by assumption $\left\vert \mathcal{F}^{(0)}(t,\xi) \right\vert \leq \mathcal{G}(t,\xi)$ for all $t \geq 0$ and $\xi \in \mathbb{R}^d$. Now assume that for a certain index $n \in \mathbb{N}$ we have that
\begin{align}
\label{EQUATComparisonPropa}
\left\vert \mathcal{F}^{(n)}(t,\xi) \right\vert \leq \mathcal{G}(t,\xi),
\end{align}
for all $t \geq 0$ and $\xi \in \mathbb{R}^d$. Then, using the definition of $\mathcal{F}^{(n+1)}$, we deduce that
\begin{align}
\left\vert \mathcal{F}^{(n+1)}(t,\xi) \right\vert &\leq \left\vert \mathcal{F}_0(\xi) e^{-(1+i(a\cdot\xi))t} \right\vert + \left\vert \int_0^t\int_{\mathbb{S}^{d-1}} b \left( \left\vert \frac{\xi}{\vert \xi \vert}\cdot \sigma \right\vert \right) \mathcal{F^{(n)}}(s,\overline{\xi}) e^{-(1+i(a\cdot\xi))(t-s)} \dd \sigma \dd s \right\vert \nonumber\\
&\leq \vert \mathcal{F}_0(\xi) \vert e^{-t} + \int_0^t \int_{\mathbb{S}^{d-1}} b \left( \left\vert \frac{\xi}{\vert \xi \vert}\cdot \sigma \right\vert \right) \vert \mathcal{F}^{(n)}(s,\overline{\xi}) \vert e^{-(t-s)} \dd \sigma \dd s \nonumber\\
&\leq \mathcal{G}_0(\xi) e^{-t} + \int_0^t \int_{\sigma\in\mathbb{S}^{d-1}} b \left( \left\vert \frac{\xi}{\vert \xi \vert}\cdot \sigma \right\vert \right) \mathcal{G}(s,\overline{\xi}) e^{-(t-s)} \dd \sigma \dd s \leq \mathcal{G}(t,\xi),
\end{align}
where in the last line we have used the assumption that $\mathcal{G}$ is a super-solution. We deduce by recursion that \eqref{EQUATComparisonPropa} holds for any index $n \in \mathbb{N}$. Passing to the limit $n\rightarrow+\infty$ in the \eqref{EQUATComparisonPropa}, we obtain that the solution $\mathcal{F}$ satisfies %% ERR: I rewrote the two sentences here, refering in particular to (4.39).
\begin{align}
\left\vert \mathcal{F}(t,\xi) \right\vert \leq \mathcal{G}(t,\xi).
\end{align}
for any $t \geq 0$ and $\xi \in \mathbb{R}^d$, which concludes the proof.
\end{proof}

\subsection{Exhibiting a super-solution to the evolution equation of the Fourier transform}

Now, we determine super-solutions of \eqref{EQUATFouriIntegTime_Vers1} in the sense of Definition \ref{DEFINSuper-SoluEquatFouri}, that are large enough to estimate from above the solutions of \eqref{EQUATFouriIntegTime_Vers1} in $X$. %% ERR: space X.
To ensure in addition that the super-solutions have an interesting long-time behaviour, we will consider functions of the form $u_p(t,\xi) = \varphi(t) \vert \xi \vert^p$, as in \cite{BoNV020}.\\
Contrary to the situation in \cite{BoNV020}, we will see that $u_p$ decays and converges to zero as $t \rightarrow +\infty$, as soon as $p > 0$, which will enable us to deduce the long-time behaviour of the solutions of \eqref{EQUATFouriIntegTime_Vers1}.

\begin{prop}[Super-solutions of \eqref{EQUATFouriIntegTime_Vers1}]
\label{PROPO_u_p_SuperSolut}
Let $p \in \mathbb{R}$ be any real number. Then, the function:
\begin{align}
u_p(t,\xi)= e^{(\lambda(p)-1)t} \vert \xi \vert^p,
\end{align}
where
\begin{align}
\label{EQUATDefinConstLambda_p__}
\lambda(p) = \int_{\mathbb{S}^{d-1}} b(\vert \sigma_1 \vert) \left( \frac{1+r^2}{2} + \frac{1-r^2}{2}\sigma_1\right)^{p/2} \dd \sigma,
\end{align}
and where $\sigma_1 = \sigma\cdot e_1$ is the first component of the vector $\sigma$, is a super-solution of \eqref{EQUATFouriIntegTime_Vers1} with initial datum $\vert \xi \vert^p$, in the sense of Definition \ref{DEFINSuper-SoluEquatFouri}.
\end{prop}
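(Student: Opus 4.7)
The plan is to insert the candidate $u_p(t,\xi) = e^{(\lambda(p)-1)t}\vert \xi \vert^p$ directly into the right-hand side of the defining inequality \eqref{EQUATDefinSuper-SoluEquatFouri}, compute everything explicitly using the collision formula $\overline{\xi} = \frac{1-r}{2}\xi + \frac{1+r}{2}\vert \xi \vert \sigma$, and check that we recover $u_p(t,\xi)$ (so that $u_p$ will in fact be a solution of the integral equation, not just a super-solution).

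First I would compute $\vert \overline{\xi} \vert^2$. Expanding the square and factoring $\vert \xi \vert^2$, one obtains
\begin{align*}
\vert \overline{\xi} \vert^{2} = \vert \xi \vert^{2} \left[ \frac{1+r^{2}}{2} + \frac{1-r^{2}}{2}\, \frac{\xi}{\vert \xi \vert}\cdot\sigma \right],
\end{align*}
so that $\vert \overline{\xi} \vert^{p} = \vert \xi \vert^{p} \bigl( \frac{1+r^{2}}{2} + \frac{1-r^{2}}{2}\, \frac{\xi}{\vert \xi \vert}\cdot\sigma \bigr)^{p/2}$. Then I would carry out the change of variables $\sigma \mapsto R\sigma$ in the angular integral, where $R$ is any rotation sending $e_{1}$ onto $\xi/\vert \xi \vert$: since the integrand depends on $\sigma$ only through $\frac{\xi}{\vert \xi \vert}\cdot\sigma$, the rotation invariance of the surface measure on $\mathbb{S}^{d-1}$ reduces this to an integral in which $\frac{\xi}{\vert \xi \vert}\cdot\sigma$ is replaced by $\sigma_{1} = \sigma\cdot e_{1}$. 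This gives precisely
\begin{align*}
\int_{\mathbb{S}^{d-1}} b\!\left(\left\vert \tfrac{\xi}{\vert \xi \vert}\cdot \sigma\right\vert \right) \vert \overline{\xi} \vert^{p}\, \dd \sigma = \vert \xi \vert^{p} \lambda(p),
\end{align*}
with $\lambda(p)$ defined in \eqref{EQUATDefinConstLambda_p__}.

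With this, the right-hand side of \eqref{EQUATDefinSuper-SoluEquatFouri} evaluated at $\mathcal{G}=u_p$ with $\mathcal{G}_0(\xi)=\vert\xi\vert^p$ becomes
\begin{align*}
\vert \xi \vert^{p} e^{-t} + \vert \xi \vert^{p}\lambda(p) e^{-t} \int_{0}^{t} e^{\lambda(p) s}\, \dd s.
\end{align*}
Computing the time integral (or treating separately the degenerate case $\lambda(p)=0$, where the integral is just $t$ and one argues that $\lambda(p)=0$ forces an equality by direct inspection), this equals
\begin{align*}
\vert \xi \vert^{p} e^{-t} + \vert \xi \vert^{p} \bigl( e^{(\lambda(p)-1)t} - e^{-t}\bigr) = \vert \xi \vert^{p} e^{(\lambda(p)-1)t} = u_{p}(t,\xi).
\end{align*}
Hence the super-solution inequality holds with equality, so $u_p$ is automatically a super-solution in the sense of Definition \ref{DEFINSuper-SoluEquatFouri}, provided one observes that $u_p \in \mathcal{C}([0,+\infty), \mathcal{C}(\mathbb{R}^d,\mathbb{R}_+))$ (which is immediate for $p \geq 0$, and is the tacit regime of interest here since for negative $p$ the function $\vert\xi\vert^p$ is not defined at the origin).

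There is no genuine obstacle in the proof: the only non-routine ingredient is the rotation invariance trick needed to pass from $\frac{\xi}{\vert \xi \vert}\cdot\sigma$ to $\sigma_{1}$ and thereby identify the angular integral with $\lambda(p)$. The slight care point is to ensure that the drift phase $e^{-i(a\cdot\xi)(t-s)}$ appearing in \eqref{EQUATFouriIntegTime_Vers1} does not enter the super-solution inequality (it has been correctly dropped in Definition \ref{DEFINSuper-SoluEquatFouri}, which is precisely why the super-solution notion is meaningful despite the solutions themselves being complex-valued), so that the above modulus estimate closes cleanly.
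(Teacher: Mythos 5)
Your proposal is correct and follows essentially the same route as the paper: identify $\vert\xi\vert^p$ as an eigenfunction of the gain operator via the rotation change of variables (yielding the eigenvalue $\lambda(p)$), then plug $u_p$ into the right-hand side of the super-solution inequality and observe that the time integral closes to give equality. The only cosmetic difference is that you compute $\vert\overline{\xi}\vert^2$ before rotating while the paper rotates first; your side remarks on the case $\lambda(p)=0$ and on the regularity of $\vert\xi\vert^p$ for $p<0$ are harmless additions.
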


\begin{proof}
The key observation is that the functions of the form $C \vert \xi \vert^p$ are eigenfunctions of the gain part $Q^+$ of the collision operator defined in \eqref{eq:gainQ^+}. In fact, we have
\begin{align}
\int_{\mathbb{S}^{d-1}} b \left( \left\vert \frac{\xi}{\vert \xi \vert}\cdot \sigma \right\vert \right) \left\vert \frac{1-r}{2}\xi + \frac{1+r}{2}\vert \xi \vert \sigma \right\vert^p \dd \sigma = \vert \xi \vert^p \int_{\mathbb{S}^{d-1}} b \left( \left\vert \frac{\xi}{\vert \xi \vert}\cdot \sigma \right\vert \right) \left\vert \frac{1-r}{2} \frac{\xi}{\vert \xi \vert} + \frac{1+r}{2} \sigma \right\vert^p \dd \sigma
\end{align}
Considering the rotation $R$ that sends $e_1$ to $\xi/ \vert \xi \vert$, and the change of variables $\sigma = R(\sigma')$, we get
\begin{align}
\int_{\mathbb{S}^{d-1}} b \left( \left\vert \frac{\xi}{\vert \xi \vert}\cdot \sigma \right\vert \right) \left\vert \frac{1-r}{2}\xi + \frac{1+r}{2}\vert \xi \vert \sigma \right\vert^p \dd \sigma & = \vert \xi \vert^p \int_{ \mathbb{S}^{d-1}} b \left( \vert (e_1 \cdot \sigma') \vert \right) \left\vert \frac{1-r}{2} R(e_1) + \frac{1+r}{2} R(\sigma') \right\vert^p \dd \sigma' \nonumber\\
&= \vert \xi \vert^p \int_{\mathbb{S}^{d-1}} b \left( \vert (e_1 \cdot \sigma') \vert \right) \left\vert \frac{1-r}{2} e_1 + \frac{1+r}{2} \sigma' \right\vert^p \dd \sigma'.
\end{align}
Furthermore we compute
\begin{align}
\left\vert \frac{1-r}{2} e_1 + \frac{1+r}{2} \sigma' \right\vert^p &= \left( \left(\frac{1-r}{2}\right)^2 + \frac{(1-r)(1+r)}{2} \sigma'\cdot e_1 + \left(\frac{1+r}{2}\right)^2 \right)^{p/2} \nonumber\\
&= \left( \frac{1+r^2}{2} + \frac{1-r^2}{2} \sigma'_1 \right)^{p/2},
\end{align}
hence $Q^+\left[ \vert \xi \vert^p \right] = \lambda(p) \vert \xi \vert^p$ where $\lambda(p)$ is defined in \eqref{EQUATDefinConstLambda_p__}, and $Q^+$ denotes the gain term of the collision operator of \eqref{EQUATFouriDifferentiVersi}.\\
To verify that $u_p$ is a super-solution, we compute the right hand side of \eqref{EQUATDefinSuper-SoluEquatFouri}, with $\mathcal{G}_0(\xi) = \vert \xi \vert^p = u_p(0,\xi)$. Hence we have
\begin{align}
u_p(0,\xi) e^{-t}& + \int_0^t \int_{\sigma\in\mathbb{S}^{d-1}} b \left( \left\vert \frac{\xi}{\vert \xi \vert}\cdot\omega \right\vert \right) u_p(s,\overline{\xi})e^{-(t-s)} \dd \sigma \dd s \nonumber\\
&= e^{-t}\vert \xi \vert^p + \int_0^t e^{(\lambda(p)-1)s}e^{-(t-s)} Q^+\left[ \vert \xi \vert^p \right] \dd s \nonumber\\
&= e^{-t}\vert \xi \vert^p + e^{-t} \left( \int_0^t \lambda(p) e^{\lambda(p)s} \dd s \right) \vert \xi \vert^p \nonumber\\
&= e^{-t} \vert \xi \vert^p + e^{-t} \left[ e^{\lambda(p) t} - 1 \right] \vert \xi \vert^p \nonumber\\
&= u_p(t,\xi)
\end{align}
which proves that $u_p$ is a super-solution in the sense of Definition \ref{DEFINSuper-SoluEquatFouri}. %The proof of Proposition \ref{PROPO_u_p_SuperSolut} is complete.
\end{proof}
\noindent
To have complete understanding of $u_p$ it only remains to study the constant $\lambda(p)$, in particular it remains to compare it with $1$. The following result provides the description of $\lambda(p)$ we need.

\begin{prop}[Estimation of $\lambda(p)$]\label{PROPOCompaLambd__1__}
Let $p > 0$. Then, the constant $\lambda(p)$, defined in \eqref{EQUATDefinConstLambda_p__}, satisfies
\begin{align}
\lambda(p) < 1.
\end{align}
\end{prop}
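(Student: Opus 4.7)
The plan is to establish $\lambda(p)<\lambda(0)=1$ by a direct pointwise comparison of the integrand. The starting point is the identity $\lambda(0)=\int_{\mathbb{S}^{d-1}}b(|\sigma_1|)\,\dd\sigma=1$, which is the normalization implicit in \eqref{EQUATFouriFirstVersi}: formally setting $\xi=0$ in the evolution equation and using mass preservation $\mathcal{F}(t,0)\equiv 1$ (so that $\partial_t\mathcal{F}(t,0)=0$) forces $\int_{\mathbb{S}^{d-1}}b(|\sigma_1|)\,\dd\sigma=1$, which is exactly $\lambda(0)$.

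Next I would analyze the base of the power appearing in $\lambda(p)$, namely
\begin{align*}
h(\sigma):=\frac{1+r^2}{2}+\frac{1-r^2}{2}\sigma_1.
\end{align*}
As $\sigma_1$ ranges over $[-1,1]$, $h(\sigma)$ is a convex combination of $r^2$ and $1$, hence $h(\sigma)\in[r^2,1]$. The strict inequality $r<1$ ensures $(1-r^2)/2>0$, so $h(\sigma)=1$ if and only if $\sigma_1=1$, i.e.\ only at the north pole of $\mathbb{S}^{d-1}$, which is a null set for $d\in\{2,3\}$. For $p>0$ the map $x\mapsto x^{p/2}$ is strictly increasing on $[0,1]$, and therefore $h(\sigma)^{p/2}\le 1$ with equality precisely on this null set.

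Finally I would integrate this pointwise bound against the non-negative weight $b(|\sigma_1|)$. Since $b\ge 0$ and $\int_{\mathbb{S}^{d-1}}b(|\sigma_1|)\,\dd\sigma=1>0$, the set $\{\sigma\in\mathbb{S}^{d-1}:b(|\sigma_1|)>0\}$ has strictly positive surface measure, and removing the single point $\{\sigma_1=1\}$ leaves a set of the same positive measure. On this set the strict inequality $b(|\sigma_1|)\,h(\sigma)^{p/2}<b(|\sigma_1|)$ holds, and integrating over $\mathbb{S}^{d-1}$ yields
\begin{align*}
\lambda(p)=\int_{\mathbb{S}^{d-1}}b(|\sigma_1|)\,h(\sigma)^{p/2}\,\dd\sigma<\int_{\mathbb{S}^{d-1}}b(|\sigma_1|)\,\dd\sigma=1.
\end{align*}
The only non-routine point is obtaining \emph{strict} inequality: this requires combining the fact that $h<1$ off a null set with the positivity on a set of positive measure of the weight $b$, the latter being a consequence of the normalization $\lambda(0)=1$. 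Aside from this bookkeeping, the argument is a clean monotonicity computation.
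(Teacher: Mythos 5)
Your proof is correct and rests on the same key observation as the paper's, namely that $\frac{1+r^2}{2}+\frac{1-r^2}{2}\sigma_1<1$ for almost every $\sigma\in\mathbb{S}^{d-1}$ (which is exactly where $r\neq 1$ enters) together with the normalization $\lambda(0)=1$. The only difference is cosmetic: the paper deduces $\lambda(p)<\lambda(0)$ by showing $\frac{\dd}{\dd p}\lambda(p)<0$, whereas you compare the integrands directly, which is slightly more elementary, and your explicit justification of the strictness of the inequality (positivity of $b$ on a set of positive surface measure, coming from $\int_{\mathbb{S}^{d-1}}b(\vert\sigma_1\vert)\,\dd\sigma=1$) makes precise a point the paper leaves implicit.
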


\begin{proof}
First, we observe that for $p=0$, we have
\begin{align}
\lambda(0)= \int_{\mathbb{S}^{d-1}} b(\vert \sigma_1 \vert) \dd \sigma = 1.
\end{align}
We consider now the derivative of $\lambda$ with respect to $p$, i.e.
\begin{align}
\frac{\dd}{\dd p} \lambda(p) &= \frac{\dd}{\dd p} \int_{\mathbb{S}^{d-1}} b(\vert \sigma_1 \vert) e^{\frac{p}{2} \ln \left( \frac{1+r^2}{2} + \frac{1-r^2}{2}\sigma_1\right) } \dd \sigma \nonumber\\
&= \int_{\mathbb{S}^{d-1}} b(\vert \sigma_1 \vert) \frac{\ln \left( \frac{1+r^2}{2} + \frac{1-r^2}{2}\sigma_1\right)}{2} e^{\frac{p}{2} \ln \left( \frac{1+r^2}{2} + \frac{1-r^2}{2}\sigma_1\right) } \dd \sigma \nonumber\\
&= \frac{1}{2} \int_{\mathbb{S}^{d-1}} b(\vert \sigma_1 \vert) \ln \left( \frac{1+r^2}{2} + \frac{1-r^2}{2}\sigma_1\right) \left( \frac{1+r^2}{2} + \frac{1-r^2}{2}\sigma_1\right)^{p/2} \dd \sigma.
\end{align}
We observe now that $-1 < \sigma_1 < 1$ for almost every $\sigma \in \mathbb{S}^{d-1}$, then for almost every $\sigma$ we have
\begin{align}
\label{EQUATEstimIntegrand_Lambd_(p)_}
\frac{1+r^2}{2} + \frac{1-r^2}{2} \sigma_1 < \frac{1+r^2}{2} + \frac{1-r^2}{2} = 1,
\end{align}
which implies $\displaystyle{\frac{\dd}{\dd p} \lambda(p) < 0}$. Therefore, we get that $\lambda(p) < 1$, for all $p > 0$. %Proposition \ref{PROPOCompaLambd__1__} is proved.
\end{proof}

\begin{remark}
Observe that in the proof of Proposition \ref{PROPOCompaLambd__1__}, we used in a crucial manner that $r \neq 1$, in \eqref{EQUATEstimIntegrand_Lambd_(p)_}. Indeed, in the elastic case $r = 1$, $\lambda(p)$ is equal to the integral of the collision kernel $b$, and is therefore independent from $p$. In such a case, the super-solution $u_p$ is never vanishing for large $t$.
\end{remark}

\subsection{Conclusion of the long-time behaviour argument}

We are now ready to prove the stability of the steady state $\mathcal{F}_\infty$. We begin by stating the result main result of this subsection, namely the long-time behaviour for $\mathcal{F}$.

\begin{thm}[Long time behaviour of the solutions of \eqref{EQUATFouriIntegTime_Vers1}]
\label{THEORLong_Time_BehavSolutFouri}
Let $\mathcal{F}_0$, $\mathcal{G}_0$ be the Fourier transforms of two probability measures $f_0,g_0 \in \mathcal{M}_+(\mathbb{R}^d)$ respectively. %% ERR: "of" was missing. The notation for the space of measures was wrong.
Assume in addition that $f_0,g_0$ have finite first order moments. In other words, we assume:
\begin{align}
&\hspace{12mm}\int_{\R} f_0(\dd v) = \int_{\R} g_0(\dd v) = 1, \notag \\
&\int_{\R} |v|f_0(\dd v) < +\infty, \hspace{3mm} \int_{\R} |v|g_0(\dd v) < +\infty.
\end{align}
Let $\mathcal{F}$, $\mathcal{G}$ be the two solutions of \eqref{EQUATFouriIntegTime_Vers1} given by Proposition \ref{PROPOGlobaWell-PosedEquatFouriTrnsf} with respective initial data $\mathcal{F}_0$ and $\mathcal{G}_0$. Then $\mathcal{F}$ converges to $\mathcal{G}$ locally as $t \rightarrow + \infty$, i.e. for any compact set $K \subseteq \mathbb{R}^d$ we have
\begin{align}
\sup_{\xi \in \mathbb{R}^d} \left\vert \left( \mathcal{F}(t,\xi) - \mathcal{G}(t,\xi) \right) \mathds{1}_K(\xi) \right\vert \underset{t \mapsto +\infty }{\longrightarrow} 0.
\end{align}
In particular, if $\mathcal{F}_\infty$ denotes the Fourier transform of the steady state $f_\infty$ obtained in Theorem \ref{thm:steadystate}, we have %% ERR: the word "theorem" was missing.
\begin{align}
\sup_{\xi \in \mathbb{R}^d} \left\vert \left( \mathcal{F}(t,\xi) - \mathcal{F}_\infty(\xi) \right) \mathds{1}_K(\xi) \right\vert \underset{t \mapsto +\infty }{\longrightarrow} 0.
\end{align}
\end{thm}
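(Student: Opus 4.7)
The plan is to exploit the linear structure of the integrated equation \eqref{EQUATFouriIntegTime_Vers1} together with the comparison principle of Proposition \ref{PROPOCompaPrincSuperSolutFouri} and the strict contraction property $\lambda(1)<1$ established in Proposition \ref{PROPOCompaLambd__1__}. The first step is to set $H(t,\xi) := \mathcal{F}(t,\xi) - \mathcal{G}(t,\xi)$; by linearity of \eqref{EQUATFouriIntegTime_Vers1}, $H$ is itself a solution in the sense of Definition \ref{def:GWPfourier} with initial datum $H_0 := \mathcal{F}_0 - \mathcal{G}_0$. To control $H_0$, I would invoke Proposition \ref{PROPOCompaFouriFinitFirstMomnt} applied to the probability measures $f_0, g_0$ with finite first order moments: this yields a constant $C > 0$ such that
\begin{align*}
|H_0(\xi)| \leq C|\xi| \qquad \forall \xi \in \R.
\end{align*}
In particular $\|H_0\|_{\infty,1} \leq C < +\infty$, so the iterative scheme of Proposition \ref{PROPOGlobaWell-PosedEquatFouriTrnsf} (which only uses finiteness of the $\|\cdot\|_{\infty,1}$ norm) produces $H$ as a legitimate global solution.

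The second step is to dominate $|H|$ by an explicitly decaying super-solution. Proposition \ref{PROPO_u_p_SuperSolut} with $p=1$ furnishes the super-solution $u_1(t,\xi) = e^{(\lambda(1)-1)t}|\xi|$ with initial datum $|\xi|$. The super-solution inequality \eqref{EQUATDefinSuper-SoluEquatFouri} being homogeneous, the rescaled function $C\,u_1$ is still a super-solution, now with initial datum $C|\xi|\geq |H_0(\xi)|$. Applying the comparison principle of Proposition \ref{PROPOCompaPrincSuperSolutFouri} to the solution $H$ and the super-solution $C\,u_1$ yields
\begin{align*}
|\mathcal{F}(t,\xi) - \mathcal{G}(t,\xi)| \leq C\,e^{(\lambda(1)-1)t}|\xi| \qquad \forall\, t\geq 0,\ \forall\, \xi \in \R.
\end{align*}

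To conclude, I would invoke Proposition \ref{PROPOCompaLambd__1__}, namely $\lambda(1) < 1$ (here the strict inelasticity $r<1$ is essential, as emphasized in the remark following that proposition). Consequently, on any compact $K \subseteq \R$, setting $M_K := \sup_{\xi \in K}|\xi| < +\infty$, one obtains
\begin{align*}
\sup_{\xi \in \R}\bigl|(\mathcal{F}(t,\xi) - \mathcal{G}(t,\xi))\mathds{1}_K(\xi)\bigr| \leq C\,M_K\,e^{(\lambda(1)-1)t} \xrightarrow[t\to+\infty]{} 0,
\end{align*}
which is the first assertion. For the second one, I would apply this with $g_0 = f_\infty$: by Theorem \ref{thm:steadystate}, $f_\infty$ (normalised to be a probability measure) has finite first order moment, and since it is stationary for \eqref{eq:LinInBolstrong_Cauchy}, its Fourier transform $\mathcal{F}_\infty$ is a time-independent solution of \eqref{EQUATFouriIntegTime_Vers1}, hence coincides with the $\mathcal{G}$ produced by Proposition \ref{PROPOGlobaWell-PosedEquatFouriTrnsf} from the initial datum $\mathcal{F}_\infty$.

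The only delicate point in this scheme is the \emph{quantitative} initial comparison $|H_0(\xi)| \leq C|\xi|$, which allows us to start from the super-solution $|\xi|^p$ at the exponent $p=1$ rather than $p=0$: had we only used the trivial bound $|H_0|\leq 2$, the natural super-solution would be $|\xi|^0 = 1$ multiplied by $e^{(\lambda(0)-1)t} = 1$, which does not decay. The interplay between the moment assumption (via Proposition \ref{PROPOCompaFouriFinitFirstMomnt}) and the strict inequality $\lambda(p)<1$ for $p>0$ (via Proposition \ref{PROPOCompaLambd__1__}) is thus the heart of the argument; both ingredients having already been prepared, no genuine obstacle remains.
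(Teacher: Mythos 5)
Your proof is correct and follows essentially the same route as the paper: control the initial difference $\mathcal{F}_0-\mathcal{G}_0$ by $C|\xi|$ via Proposition \ref{PROPOCompaFouriFinitFirstMomnt}, use linearity to view the difference as a solution, dominate it by the super-solution $Cu_1$ through Proposition \ref{PROPOCompaPrincSuperSolutFouri}, and conclude from $\lambda(1)<1$ (Proposition \ref{PROPOCompaLambd__1__}). Your explicit remarks on the homogeneity needed to rescale $u_1$ by $C$ and on why the exponent $p=1$ rather than $p=0$ is essential are slightly more careful than the paper's own wording, but the argument is the same.
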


\begin{proof}
By our assumption on $f_0,g_0$ we can apply Proposition \ref{PROPOCompaFouriFinitFirstMomnt}, which gives that %% ERR: word "on" missing.
\begin{align}
\left\vert \mathcal{F}_0(\xi) - \mathcal{G}_0(\xi) \right\vert \leq C \vert \xi \vert,
\end{align}
for any $\xi \in \mathbb{R}^d$. Consider now the solutions $\mathcal{F}$ and $\mathcal{G}$ of \eqref{EQUATFouriIntegTime_Vers1} associated to the initial data $\mathcal{F}_0$ and $\mathcal{G}_0$ respectively. By linearity and from Proposition \ref{PROPOGlobaWell-PosedEquatFouriTrnsf}, $\mathcal{F}-\mathcal{G}$ is the solution of \eqref{EQUATFouriIntegTime_Vers1} with initial datum $\mathcal{F}_0-\mathcal{G}_0$. Taking $p=1$ we observe that, from Proposition \ref{PROPOCompaFouriFinitFirstMomnt}, the difference $\mathcal{F}-\mathcal{G}$ is initially bounded from above by the function $C u_1(t,\xi)$, which, according to Proposition \ref{PROPO_u_p_SuperSolut}, is a super-solution of \eqref{EQUATFouriIntegTime_Vers1}. Therefore, by Proposition \ref{PROPOCompaPrincSuperSolutFouri}, we deduce that, for all $t \geq 0$ and $\xi \in \mathbb{R}^d$:
\begin{align}
\left\vert \mathcal{F}(t,\xi) - \mathcal{G}(t,\xi) \right\vert \leq Cu_1(t,\xi).
\end{align}
Finally from Proposition \ref{PROPOCompaLambd__1__} and the explicit expression of $u_1$, we conclude that $u_1$ converges to zero 
on every compact set $K \subseteq \mathbb{R}^d$ as $t \rightarrow +\infty$. %The proof of Theorem \ref{THEORLong_Time_BehavSolutFouri} is complete.
%% ERR: "on compact sets", and not "for compact sets".
\end{proof}

\noindent
Using now classical results (see for instance \cite{Fe2}) we can deduce, from the pointwise convergence of the Fourier transforms $\mathcal{F}(t,\cdot)$ of the solutions $f(t)$ to \eqref{eq:LinInBolstrong_Cauchy}, the weak convergence of the measures $\big( f(t) \big)_t$. %% ERR: I specified here that the family of measures f(t) is indexed by the time variable t.

\begin{cor}[Long time behaviour of the solution $f(t)$ of \eqref{eq:LinInBolstrong_Cauchy}]
\label{COROLLong_Time_Behav}
Let $f_0,g_0 \in \mathscr{M}(\R)$ be two probability measures with finite first order moments: %% ERR: I added "be two measures".
\begin{align}
&\hspace{12mm}\int_{\R} f_0(\dd v) = \int_{\R} g_0(\dd v); \notag \\
&\int_{\R} |v|f_0(\dd v) < +\infty, \hspace{3mm} \int_{\R} |v|g_0(\dd v) < +\infty.
\end{align} %% ERR: the subscript 0 was missing for g.
Then, the two solutions $f,g$ of \eqref{eq:LinInBolstrong_Cauchy} given by Theorem \ref{thm:wp1} with respective initial data $f_0$ and $g_0$, are weakly converging to $0$ as $t\rightarrow +\infty$.
\end{cor}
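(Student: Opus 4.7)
The plan is to pass from the local uniform convergence of the Fourier transforms established in Theorem \ref{THEORLong_Time_BehavSolutFouri} to the weak convergence of the measure-valued solutions via the classical L\'evy continuity theorem. Since the statement reduces to showing that $f(t) - g(t)$ tests to zero against every $\vp \in \mathcal{C}_0(\mathbb{R}^d)$, it suffices to show that $f(t)$ and $g(t)$ converge weakly to a common limit, which will be $f_\infty$.

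First, I would normalize. The first equation of \eqref{eq:evolutionMacro} yields $M_0(t) = M_0(0)$ for both solutions, so by hypothesis $f(t)$ and $g(t)$ have the same (constant) total mass. After dividing by this common mass, we may assume $f_0$ and $g_0$ are probability measures with finite first order moment, so that the hypotheses of Proposition \ref{PROPOCompaFouriFinitFirstMomnt} and Theorem \ref{THEORLong_Time_BehavSolutFouri} are fulfilled. Applying Theorem \ref{THEORLong_Time_BehavSolutFouri} with $\mathcal{G}_0 = \mathcal{F}_\infty$ (the Fourier transform of the steady state $f_\infty$ constructed in Theorem \ref{thm:steadystate}, which has finite first moment) yields
\begin{align*}
\sup_{\xi \in K} \left\vert \mathcal{F}(t,\xi) - \mathcal{F}_\infty(\xi) \right\vert \xrightarrow[t \to +\infty]{} 0 \quad \text{and} \quad \sup_{\xi \in K} \left\vert \mathcal{G}(t,\xi) - \mathcal{F}_\infty(\xi) \right\vert \xrightarrow[t \to +\infty]{} 0
\end{align*}
for every compact $K \subseteq \mathbb{R}^d$.

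Second, I would invoke the L\'evy continuity theorem (see \cite{Fe2}): if a sequence of characteristic functions of probability measures on $\mathbb{R}^d$ converges pointwise to a function $\Phi$ that is continuous at the origin, then the associated probability measures converge weakly to a probability measure whose characteristic function is $\Phi$. The limit function $\mathcal{F}_\infty$ is the Fourier transform of the finite non-negative measure $f_\infty$, hence continuous on $\mathbb{R}^d$ and satisfies $\mathcal{F}_\infty(0) = 1$ (thanks to mass conservation between $f_0$ and $f_\infty$, which follows from Theorem \ref{thm:steadystate} and the conservation of mass in \eqref{eq:evolutionMacro}). Therefore both $f(t)$ and $g(t)$ converge weakly to $f_\infty$ as $t \to +\infty$, and in particular $f(t) - g(t) \rightharpoonup 0$ against every $\vp \in \mathcal{C}_b(\mathbb{R}^d)$, a fortiori against $\mathcal{C}_0(\mathbb{R}^d)$.

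The only delicate point is the matching between the two notions of weak convergence: the probabilistic one (duality with $\mathcal{C}_b$) and the weak-$*$ topology of $\mathscr{M}(\mathbb{R}^d)$ viewed as the dual of $\mathcal{C}_0$. However, since the total masses are preserved along the evolution and coincide with the mass of the limit $f_\infty$, no mass escapes to infinity, and the two notions are equivalent here; thus no additional tightness argument is required beyond what L\'evy's theorem already provides. This is the main step where care is warranted, but it is routine once the Fourier-side convergence of Theorem \ref{THEORLong_Time_BehavSolutFouri} is in hand.
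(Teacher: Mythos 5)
Your argument is correct and follows essentially the same route as the paper, which deduces the corollary from the local uniform convergence of the Fourier transforms in Theorem \ref{THEORLong_Time_BehavSolutFouri} together with the classical L\'evy continuity theorem cited from \cite{Fe2}. Your additional remarks on normalization and on reconciling convergence against $\mathcal{C}_b(\mathbb{R}^d)$ with the weak$-*$ topology of $\mathscr{M}(\mathbb{R}^d)$ (no mass escaping to infinity by conservation of mass) are sound and merely make explicit what the paper leaves implicit.
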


\noindent
Applying in particular Corollary \ref{COROLLong_Time_Behav} to the case when one of the two functions $f,g$ is the steady state of the linear inelastic Boltzmann equation \eqref{EQUATIntroLineaInelaBoltz} that was obtained in Theorem \ref{thm:steadystate}, the proof of Theorem \ref{thm:stability} is complete.

\section{Long-time behaviour of the moments for general homogeneity $\mu \geq 0$ }\label{sec:Generalmoments}

In this section we will consider the evolution, and in particular the long-time behaviour, of the moments of the homogeneized version \eqref{EQUATIntroLineaInelaBoltzRehom} of \eqref{EQUATIntroLineaInelaBoltz}. We recall that the rehomogeneized homogeneous, inelastic linear Boltzmann equation, for Maxwell molecules, with a gravity field, is
\begin{align*}
\partial_t f(t,v) &+ a \cdot \partial_v f(t,v) = T^\mu(t) \Bigg[ \int_{\mathbb{S}^{d-1}} \frac{1}{r}  b(|\hspace{0.25mm}'N\hspace{-0.5mm} \cdot \omega|) f(t,'\hspace{-1mm}v) \dd \omega - f(t,v) \Bigg],
\end{align*}
where $T=T(t)$ is the temperature, $\mu \geq 0$ and, without loss of generality, we have assumed in this section that $\int_{\mathbb{S}^{d-1}} b\Big( \vert N \cdot \omega \vert\Big) \dd \omega = 1$.\\
As in Section \ref{sec:momentEst}, it is possible to derive the evolution equation for the moments of a solution of \eqref{EQUATIntroLineaInelaBoltzRehom}. Considering the weak formulation of the equation, and choosing $\{1,v,|v|^2\}$ as test functions we obtain the evolution equations of the moments of order zero, one and two. These evolution equations are explicitly written in the following proposition.

\begin{prop}\label{prop:momentsRenorm}
    Let $d=2,3$ and let the restitution coefficient $r$ be in $]0,1[$. Let $\mu > 0$ be a strictly positive real number. Let the angular collision kernel $b$ satisfy Assumption \ref{ass:kernelB} and suppose that $f$ is a solution of \eqref{EQUATIntroLineaInelaBoltzRehom}. Then we have
    \begin{align}
        \frac{\dd M_0}{\dd t} & = 0, \label{eq:evMassRenorm} \\
        \frac{\dd M_1}{\dd t} & = a_i M_0 - C_d(1+r) T^\mu M_{1}, \label{eq:evBulkVrenorm} \\
        \frac{\dd T}{\dd t} \hspace{1mm} & =  a \cdot M_1 - \frac{C_d}{2} (1-r^2) T^{\mu+1}, \label{eq:evTempRenorm}
    \end{align}
    where $C_d$ is a constant depending only on $d$ for which we have
    \begin{equation}
    \label{EQUATConst_C_d_}
        \begin{cases}
            C_d= 2\int_{-1}^1 b(x) \frac{x^2}{\sqrt{1-x^2}}\dd x & \text{for $d=2$,}\\
            C_d= 2 \pi\int_{-1}^1b(x)x^2 \dd x & \text{for $d=3$.}
        \end{cases}
    \end{equation}
\end{prop}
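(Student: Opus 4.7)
The plan is to apply the weak formulation \eqref{eq:weakf} of the rehomogeneized equation \eqref{EQUATIntroLineaInelaBoltzRehom} to the three choices of test function $\varphi(v) \in \{1,\, v,\, |v|^2\}$ and to read off the corresponding evolution equations. Because these polynomial functions are not admissible in the sense of Definition \ref{def:weakSol}, I would first test against the truncated versions $\chi_R(v)\varphi(v)$, where $\chi_R \in \mathcal{C}_c^\infty(\mathbb{R}^d)$ equals one on $\{|v|\leq R\}$ and is supported on $\{|v|\leq 2R\}$, and then pass to the limit $R \to +\infty$. Under the standing hypothesis $\{|v|, |v|^2\} f(t) \in \mathscr{M}_+(\mathbb{R}^d)$ for every $t$, dominated convergence applied separately to the drift term and to the two halves of $\mathcal{L}^*$ will justify the limit. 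This cut-off procedure is the principal technical obstacle, and I will handle it exactly as in Remark 6 of \cite{MNV}.

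Once the formal identities are admissible, the computations are direct. For $\varphi = 1$, both $a\cdot\partial_v \varphi$ and $\mathcal{L}^*(\varphi)$ vanish pointwise, which yields \eqref{eq:evMassRenorm} immediately. For $\varphi = v$, I will use $a\cdot\partial_v v = a$ together with the expansion
\begin{equation*}
\mathcal{L}^*(v) = -(1+r)\int_{\mathbb{S}^{d-1}} b(|N\cdot\omega|)(v\cdot\omega)\omega\,\dd\omega.
\end{equation*}
For $\varphi = |v|^2$, a direct expansion of the collision law $v' = v - (1+r)(v\cdot\omega)\omega$ produces the identity $|v'|^2 - |v|^2 = -(1-r^2)(v\cdot\omega)^2$, whence
\begin{equation*}
\mathcal{L}^*(|v|^2) = -(1-r^2)\int_{\mathbb{S}^{d-1}} b(|N\cdot\omega|)(v\cdot\omega)^2\,\dd\omega,
\end{equation*}
while the drift contributes $2a\cdot v$.

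To conclude, I will reduce the two angular integrals above by means of the rotation $R$ with $R e_1 = v/|v|$ and the change of variables $\omega = R\tilde\omega$, which makes $b(|N\cdot\omega|) = b(|\tilde\omega_1|)$ and allows everything to be computed in closed form. This is precisely the reduction already carried out in the proofs of Propositions \ref{prop:estimatesMacrod=2} and \ref{prop:estimatesMacrod=3}, and it produces $C_d\, v$ and $C_d\,|v|^2$ respectively, uniformly for $d\in\{2,3\}$, with $C_d$ as in \eqref{EQUATConst_C_d_}. Integrating these expressions against $f(t,\dd v)$, factoring out the $v$-independent rehomogeneization constant $T^\mu(t)$, and finally using $T = M_2/2$ with $\dd T/\dd t = \tfrac{1}{2}\,\dd M_2/\dd t$ to rewrite the equation for $M_2$ as one for $T$, will give \eqref{eq:evBulkVrenorm} and \eqref{eq:evTempRenorm}. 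The overall argument is a direct adaptation of the one used in Section \ref{sec:momentEst} for the non-rehomogeneized equation, with the sole modification that the collision contributions now carry the extra nonlinear prefactor $T^\mu(t)$.
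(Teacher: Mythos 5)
Your proposal is correct and coincides with the paper's own (omitted) proof, which simply refers back to the computations of Section \ref{sec:momentEst}: test the weak formulation against cut-off versions of $\{1,v,|v|^2\}$ (justified as in Remark 6 of \cite{MNV}), compute $\Ll^*$ on them via the collision law, reduce the angular integrals by the rotation $\omega=R\tilde\omega$, and carry along the extra prefactor $T^\mu(t)$. The only caveat is constant bookkeeping: your (correct) identities $\Ll^*(|v|^2)=-(1-r^2)\int_{\mathbb{S}^{d-1}}b(|N\cdot\omega|)(v\cdot\omega)^2\,\dd\omega$ and $\int_{\mathbb{S}^{d-1}}b(|N\cdot\omega|)(v\cdot\omega)^2\,\dd\omega=C_d|v|^2$ lead to $\dd T/\dd t=a\cdot M_1-C_d(1-r^2)T^{\mu+1}$, so the factor $\tfrac{C_d}{2}$ in \eqref{eq:evTempRenorm} (and the factor $2$ multiplying $(1-r^2)$ in the last line of \eqref{eq:evolutionMacro}) deserves a second look, the paper itself not being consistent on this point between \eqref{eq:evTempRenorm} and \eqref{EQUATSyste2dMmt_Cas_MaxwlRehomGammaGener}.
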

\noindent
The proof of the previous proposition, which we do not present here, follows the same lines of the ones provided in Section \ref{sec:momentEst}.

\subsection{Considerations on the phase space for the ODE system of the moments.}

The system of equations obtained in Proposition \ref{prop:momentsRenorm} is closed. However, the non-linear structure of the system in the case $\mu > 0$ prevents to solve it explicitly. %% ERR: repetition of "Nevertheless", replaced in the first place by "However".
Nevertheless, it is possible to obtain detailed information on the behaviour of the solutions to \eqref{eq:evBulkVrenorm}-\eqref{eq:evTempRenorm} with a qualitative study of these ODEs. %% ERR: "the respective" replaced by "these".
Notice that independently on $\mu$ we always have $M_0(t)=M(0)$. Since the case $\mu = 0$ was already addressed in Section \ref{sec:momentEst}, we will focus on the case $\mu \neq 0$.\\
\newline
In order to study completely the evolution of the temperature, it is enough to understand the evolution of $a\cdot M_1$, the projection of the first moment along the direction of the acceleration field. We will denote such a projection by $x$:
\begin{align}
x(t) = a\cdot M_1(t).
\end{align}
Denoting the temperature by $y$, we will obtain a two-dimensional system of ODEs of the form $\frac{\dd}{\dd t} (x,y) = \big(f(x,y),g(x,y)\big)$ with $f,g:\mathbb{R}^2 \rightarrow \mathbb{R}$. Namely, from \eqref{eq:evBulkVrenorm}-\eqref{eq:evTempRenorm}, we deduce the following system of evolution equations for $x(t)$ and $y(t)$:
\begin{align}
\label{EQUATSyste2dMmt_Cas_MaxwlRehomGammaGener}
\left\{
\begin{array}{rcl}
\vspace{2mm}
\displaystyle{\frac{\dd}{\dd t}} x &=& \vert a \vert^2 M_0 - C_d(1+r) x y^\mu,\\
\displaystyle{\frac{\dd}{\dd t}} y &=& 2 x - C_d(1-r^2) y^{\mu+1}.
\end{array}
\right.
\end{align}
Since the temperature $T$ is defined as
\begin{align}
T(t) = y(t) = \frac{1}{2}\int_{\R} \vert v \vert^2 f(t,v) \dd v,
\end{align}
such a quantity can never be negative. On the other hand, the projection $x$ of the first moment can have both signs. We will therefore work in the phase space $y \geq 0$. In particular, it is important to ensure that the solutions $t \mapsto \left(x(t),y(t)\right)$ never lead to unphysical situations where $y(t) < 0$.\\
As a matter of fact, by considering some particular points $(x,y)$ of the phase space of the form $(x,0)$ (namely, choosing $x < 0$), it is possible to show that the solution \eqref{EQUATSyste2dMmt_Cas_MaxwlRehomGammaGener} to the moment system through such points are such that $y' < 0$, leading therefore to a negative temperature for arbitrarily small positive times.\\
In addition, the first and second moments are not independent, because they are both linked to the distribution function $f$. Indeed, we have
\begin{align}
\label{EQUATLink_FirstSeconMmnts}
\vert x(t) \vert &= \left\vert \int_{\R} (a\cdot v) f(t,v) \dd v \right\vert \leq \vert a \vert \int_{\R} \vert v \vert f(t,v) \dd v \leq \vert a \vert \sqrt{\int_{\R} f(t,v) \dd v} \cdot \sqrt{\int_{\R} \vert v \vert^2 f(t,v) \dd v} \nonumber\\
&\leq \vert a \vert \sqrt{M_0} \sqrt{y(t)}.
\end{align}
As a consequence, the relevant region in the phase space of the ODE system on $(x,y)$ is the hypergraph of the function $\displaystyle{y = \frac{x^2}{\vert a \vert M_0}}$. We observe in particular that points of the form $(x,0)$, with $x < 0$, do not belong to such a region, ruling out the only possibility to produce solutions with a temperature switching from positive and physical values, to negative values.\\
We can prove the following result, that can be seen as a test of consistency for the system of the moments \eqref{EQUATSyste2dMmt_Cas_MaxwlRehomGammaGener}.

\begin{prop}
\label{PROPOInvarRegioPhaseSpace}
Let $a \in \mathbb{R}^d$ be a fixed vector, and let $C_d$ be as in \eqref{EQUATConst_C_d_}. Let $\mu > 0$ be a strictly positive real number. Then, the region:
\vspace{-5mm}
\begin{align}
\label{EQUATPhysiRegioPhaseSpace}
\mathcal{R} = \{(x,y) \in \mathbb{R}^2\ /\ \vert a \vert M_0 y \geq x^2\}
\end{align}
is an invariant region under the flow of the ODE system \eqref{EQUATSyste2dMmt_Cas_MaxwlRehomGammaGener}.
\end{prop}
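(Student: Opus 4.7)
The plan is to realise $\mathcal{R}$ as a super-level set $\{F \geq 0\}$ of a smooth function and apply a standard sub-tangential (Nagumo-type) invariance criterion: for a $C^1$ vector field on $\mathbb{R}^2$ and a closed region with smooth boundary, positive invariance is equivalent to $\dot F \geq 0$ on $\partial \mathcal{R}$. Reading the statement in accordance with the Cauchy--Schwarz bound \eqref{EQUATLink_FirstSeconMmnts} (i.e.\ with the defining inequality $|a|^2 M_0 y \geq x^2$, which is the natural scaling), I set
\[
F(x,y) := |a|^2 M_0\, y - x^2, \qquad \nabla F(x,y) = (-2x,\; |a|^2 M_0).
\]
Since $a \neq 0$, the gradient never vanishes, so $\partial \mathcal{R}$ is a smooth parabolic arc, and the tangent-cone condition reduces exactly to checking the sign of $\dot F$ on $\partial \mathcal{R}$.

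The core of the proof is then a single algebraic identity. Differentiating $F$ along \eqref{EQUATSyste2dMmt_Cas_MaxwlRehomGammaGener} and substituting the expressions for $\dot x,\dot y$, the forcing contributions $|a|^2 M_0\cdot 2x$ and $-2x\cdot |a|^2 M_0$ cancel, leaving
\[
\dot F \;=\; -\,C_d(1-r^2)\,|a|^2 M_0\, y^{\mu+1} + 2\,C_d(1+r)\,x^2 y^{\mu}
\;=\; C_d(1+r)\, y^{\mu}\bigl[\,2 x^2 - (1-r)\,|a|^2 M_0\, y\,\bigr].
\]
Restricting to $\partial\mathcal{R}$, where $x^2 = |a|^2 M_0 y$, the bracket collapses to $(1+r)\,|a|^2 M_0 y$, and one obtains
\[
\dot F\big|_{\partial\mathcal{R}} \;=\; C_d\,(1+r)^2\,|a|^2 M_0\, y^{\mu+1} \;\geq\; 0.
\]
This cancellation identity is the only substantive computation in the proof.

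Positive invariance of $\mathcal{R}$ then follows from the classical tangent-cone criterion for closed sets under a $C^1$ flow: any trajectory that reaches $\partial \mathcal{R}$ does so with $\nabla F \cdot (\dot x,\dot y) \geq 0$, which precludes it from leaving $\{F \geq 0\}$. The one mildly delicate point is the vertex $(0,0) \in \partial \mathcal{R}$, where $\dot F$ also vanishes; however, there the velocity $(|a|^2 M_0,0)$ is tangent to the parabola $y = x^2/(|a|^2 M_0)$ at its vertex, so the tangent-cone inclusion still holds (and an explicit short-time expansion $x(t) \sim |a|^2 M_0 t,\ y(t) \sim |a|^2 M_0 t^2$ confirms that the trajectory issuing from the origin in fact runs along $\partial \mathcal{R}$). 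Beyond handling this degenerate vertex, I do not expect any genuine obstacle: the proposition reduces to the cancellation identity above combined with the soft invariance argument.
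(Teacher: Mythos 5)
Your proof is correct and follows essentially the same route as the paper: both compute the derivative of $F(x,y)=|a|^2M_0\,y-x^2$ along the flow (equivalently, the scalar product of the inward normal $(-2x,|a|^2M_0)$ with the vector field), observe the cancellation of the forcing terms, and use $x^2=|a|^2M_0\,y$ on the boundary to land on the same quantity $C_d(1+r)^2|a|^2M_0\,y^{\mu+1}\geq 0$. Your explicit treatment of the vertex $(0,0)$, where this quantity degenerates to zero, is a welcome refinement that the paper glosses over (it asserts strict positivity of the normal component, which fails at $y=0$), and your reading of the region with $|a|^2$ rather than $|a|$ correctly matches the Cauchy--Schwarz bound and the paper's own proof.
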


\begin{proof}
At any point of coordinates $(x,y)$ of the boundary $x^2 = \vert a \vert^2 M_0 y$ of the region $\mathcal{R}$, a normal, ingoing vector $n(x,y)$ to the boundary at this point is given by
\begin{align}
n(x,y) = \left( -2x,\vert a \vert^2 M_0 \right).
\end{align}
Computing the scalar product between the ingoing normal vector $n$ and the time derivative of the integral curve of the system \eqref{EQUATSyste2dMmt_Cas_MaxwlRehomGammaGener} through the point $(x,y)$ provides
\begin{align}
n(x,y) \cdot \left(x'(t),y'(t)\right) &= -2x\left( \vert a \vert^2 M_0 - C_d(1+r)xy^\mu \right) + \vert a \vert^2 M_0 \left( 2x - C_d(1-r^2)y^{\mu+1} \right) \nonumber\\
&= C_d \vert a \vert^2 M_0 y^{\mu +1} (1+r)^2.
\end{align}
We observe, on the one hand, that we can always assume $M_0 > 0$, and on the other hand that $a > 0$. In the case $a = 0$, that is, in the case of a Lorentz gas of Maxwell molecules, without any acceleration field, the projection $x$ we introduced is zero by definition, and the evolution equation of the temperature becomes the simple one-dimensional ODE:
\begin{align}
\frac{\dd}{\dd t} y = - C_d (1-r^2) y^{\mu +1},
\end{align}
which implies that a system starting from any positive temperature will have a positive temperature for any time (in the future, but also in the past).\\
Therefore, in the ``non trivial'' case $a \neq 0$, $M_0 \neq 0$, we find that $n(x,y) \cdot (x',y') > 0$, proving that the hypergraph of $x^2 = \vert a \vert^2 M_0 y$ is an invariant region under the flow of the ODE system \eqref{EQUATSyste2dMmt_Cas_MaxwlRehomGammaGener}. In other words, no solution of the ODE system starting in the hypergraph of $x^2 = \vert a \vert^2 M_0 y$ can leave this region.
\end{proof}

\begin{remark}
The result of Proposition \ref{PROPOInvarRegioPhaseSpace} can be interpreted as follows. Any physical choice of the initial data for the variables $(x_0,y_0)$ (in the sense that the initial data satisfy the condition \eqref{EQUATLink_FirstSeconMmnts}) lead to an integral curve that will satisfy the same physical condition \eqref{EQUATLink_FirstSeconMmnts} for all positive times.
\end{remark}

\noindent
To conclude this preliminary discussion about the ODE system \eqref{EQUATSyste2dMmt_Cas_MaxwlRehomGammaGener}, let us observe that the only equilibrium $\left(x_\infty,y_\infty\right)$ of the system belongs to the invariant region $\vert a \vert^2 M_0 y \geq x^2$. Indeed, by assumption, we have:
\begin{align}
\vert a \vert^2 M_0 = C_d(1+r) x_\infty y_\infty^\mu \hspace{3mm} \text{and} \hspace{3mm} 2x_\infty = C_d(1-r^2) y_\infty^{\mu+1}.
\end{align}
Therefore, we have $2x_\infty^2 = C_d(1-r^2) x_\infty y_\infty^\mu \cdot y_\infty = C_d(1-r^2) \displaystyle{\frac{\vert a \vert^2 M_0}{C_d(1+r)}} y_\infty = (1-r) \vert a \vert^2 M_0 y_\infty$, so that
\begin{align}
x_\infty^2 < \vert a \vert^2 M_0 y_\infty.
\end{align}

\begin{remark}
We found that the parabola corresponding to the boundary of the region $\mathcal{R}$, and which describes the consistency relation between the first and second moment coming from the Cauchy-Schwarz inequality, cannot be crossed by solutions of the ODE system \eqref{EQUATSyste2dMmt_Cas_MaxwlRehomGammaGener} evolving forward.\\
Nevertheless, it is possible to consider solutions of \eqref{EQUATSyste2dMmt_Cas_MaxwlRehomGammaGener} starting exactly on such a parabola. For positive times, the solutions will remain always inside the hypergraph of the parabola, but for negative times, the solutions will leave this region, leading to solutions that cannot be associated to any distribution function $f$ solving the inelastic Lorentz equation. Conversely, one can consider the solution of the ODE system \eqref{EQUATSyste2dMmt_Cas_MaxwlRehomGammaGener} starting inside the region $\mathcal{R}$, for times both positive and negative. By doing so, we would observe that the solutions leave (for negative times) the region $\mathcal{R}$. This strongly suggests that \eqref{EQUATIntroLineaInelaBoltzRehom} is not well-posed globally for negative times, and that the solutions of this equation develop singularity in finite, negative times.
\end{remark}

\subsection{Long time behaviour of the moments}\label{par:gamma>0}

In this section, we will carry out the study of the qualitative behaviour of the solutions of the system \eqref{EQUATSyste2dMmt_Cas_MaxwlRehomGammaGener}. In particular, we will prove that all the solutions (starting inside the physically relevant region $\mathcal{R}$) converge to the only equilibrium of the system.\\
First, we establish that all the solutions of \eqref{EQUATSyste2dMmt_Cas_MaxwlRehomGammaGener} eventually enter the first quadrant in finite time.

\begin{prop}[Invariance and attractivity of the first quadrant]
\label{PROPOInvarRegioPhaseSpaceQuadr}
Let $a \in \mathbb{R}^d$ be a fixed vector, and let $C_d$ be as in \eqref{EQUATConst_C_d_}. Let $\mu > 0$ be a strictly positive real number.\\
Then, the region:
\begin{align}
\label{EQUATPhysiRegioPhaseSpaceQuadr}
\mathcal{R} \cap \{(x,y)\in\mathbb{R}^2\ /\ x \geq 0, y \geq 0\},
\end{align}
where $\mathcal{R}$ is defined in \eqref{EQUATPhysiRegioPhaseSpace}, is an invariant region under the flow of the ODE system \eqref{EQUATSyste2dMmt_Cas_MaxwlRehomGammaGener}. In addition, any solution of the system starting initially from a point of $\mathcal{R}$ enters the first quadrant $\{(x,y)\in\mathbb{R}^2\ /\ x \geq 0, y \geq 0\}$ in finite time.
\end{prop}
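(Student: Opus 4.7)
The plan is to combine the invariance of $\mathcal{R}$ (already established in Proposition \ref{PROPOInvarRegioPhaseSpace}), which in particular forces $y(t) \geq 0$ for all $t \geq 0$ along any trajectory starting in $\mathcal{R}$, with a direct sign analysis of the vector field on the coordinate half-lines intersected with $\mathcal{R}$. The case $a = 0$ is trivial (then $x \equiv 0$ and $y$ satisfies a scalar cooling equation that preserves $y \geq 0$), so I assume $a \neq 0$ and $M_0 > 0$.

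For invariance of $\mathcal{R} \cap \{x \geq 0, y \geq 0\}$, the only piece of the boundary that needs checking (beyond the parabolic boundary of $\mathcal{R}$ itself, which is already handled by Proposition \ref{PROPOInvarRegioPhaseSpace}) is the portion of the axes lying in $\mathcal{R}$. The axis $\{y = 0\}$ meets $\mathcal{R}$ only at the origin because the defining inequality $|a|^2 M_0 y \geq x^2$ forces $x = 0$ when $y = 0$. On $\{x = 0, y > 0\}$ the first equation of \eqref{EQUATSyste2dMmt_Cas_MaxwlRehomGammaGener} gives $x'= |a|^2 M_0 > 0$, so the flow enters $\{x > 0\}$. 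At the origin one has $x' = |a|^2 M_0 > 0$ and $y'= 0$; once the trajectory has moved off the origin, $x>0$ and hence $y' = 2x - 0 = 2x > 0$ along $\{y = 0\}$, so $y$ cannot become negative. This shows that no trajectory starting in $\mathcal{R} \cap \{x \geq 0, y \geq 0\}$ can exit this region.

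For attractivity, let $(x_0, y_0) \in \mathcal{R}$ with $x_0 < 0$ (otherwise there is nothing to prove). As long as the solution has $x(t) \leq 0$, the invariance of $\mathcal{R}$ yields $y(t) \geq 0$, hence $y(t)^\mu \geq 0$, hence $-C_d(1+r)\, x(t)\, y(t)^\mu \geq 0$. Plugging into \eqref{EQUATSyste2dMmt_Cas_MaxwlRehomGammaGener}, this gives the uniform lower bound
\begin{equation*}
x'(t) \geq |a|^2 M_0,
\end{equation*}
and so $x(t) \geq x_0 + |a|^2 M_0\, t$. Thus $x$ must reach $0$ no later than $t_* = -x_0 / (|a|^2 M_0)$, after which the invariance statement proved above shows that the trajectory remains in $\mathcal{R} \cap \{x \geq 0, y \geq 0\}$ for all subsequent times.

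The main subtlety is ensuring that while $x$ is still negative, the nonlinear term $-C_d(1+r) x y^\mu$ has the favourable (non-negative) sign; this relies crucially on $y$ remaining non-negative, which is not obvious a priori from the ODE alone but is supplied by the already-established invariance of the larger region $\mathcal{R}$. Once this is in hand, the rest is a one-line linear comparison argument.
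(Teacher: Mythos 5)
Your proof is correct. The invariance part is essentially the paper's argument: both reduce to a sign check of the vector field on the portion of the axes lying in $\mathcal{R}$ (noting that $x' = \vert a \vert^2 M_0 > 0$ on $\{x=0\}$ and that the $\{y=0\}$ boundary is already controlled by the invariance of $\mathcal{R}$ from Proposition \ref{PROPOInvarRegioPhaseSpace}). Where you genuinely diverge is the attractivity claim. The paper argues softly: it supposes a trajectory stays forever in the quadrant $x \leq 0$, $y \geq 0$, observes that there $x' \geq 0$ and $y' \leq 0$, concludes that the bounded monotone trajectory must converge to an equilibrium, and derives a contradiction because no equilibrium lies in that quadrant. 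You instead extract the quantitative bound $x'(t) \geq \vert a \vert^2 M_0$ from the sign of the nonlinear term while $x \leq 0$ and $y \geq 0$, which yields the explicit entry time $t_* = -x_0/(\vert a \vert^2 M_0)$. Your route is shorter and sharper: it gives an effective bound on the entry time and sidesteps the (slightly delicate) step in the paper's argument that the limit of a bounded monotone trajectory must be an equilibrium. The paper's version buys nothing extra here, so your replacement is a clean improvement; you also correctly identify that the non-negativity of $y$, needed to fix the sign of $-C_d(1+r)xy^\mu$, must be imported from the invariance of $\mathcal{R}$ rather than read off the ODE.
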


\begin{proof}
We start to prove that the first quadrant $\{(x,y)\in\mathbb{R}^2\ /\ x > 0, y > 0\}$ is an invariant region under the flow of the ODE system \eqref{EQUATSyste2dMmt_Cas_MaxwlRehomGammaGener}.\\
If a solution $(x(t),y(t))$ satisfies $x(t_0) = 0$ for some time $t_0$, then $x'(t_0) > 0$, so such a solution would be outside at $t_0-h$ for $h>0$ small enough. We obtain therefore a contradiction by considering a solution starting inside $\mathcal{R}$ and by considering for $t_0$ the first time this solution leaves the first quadrant. %% ERR: addition of this new sentence.
If $y(t_0) = 0$ and $x(t_0) > 0$ then $y'(t_0) > 0$, and the same argument applies. If finally $x(t_0) = y(t_0) = 0$, then the solution lies on the boundary of $\mathcal{R}$ at time $t_0$, which is an invariant region under the flow of \eqref{EQUATSyste2dMmt_Cas_MaxwlRehomGammaGener} by Proposition \ref{PROPOInvarRegioPhaseSpace}.\\
We now prove that all the solutions starting inside $\mathcal{R}$ enter eventually in $\mathcal{R} \cap \{(x,y)\in\mathbb{R}^2\ /\ x \geq 0, y \geq 0\}$. %% ERR: addition of "starting inside R".
Let us assume that a solution $(x(t),y(t))$ of the system \eqref{EQUATSyste2dMmt_Cas_MaxwlRehomGammaGener} remains for all positive time in the quadrant $x \leq 0, y \geq 0$. Since this solution remains in $\mathcal{R}$ by Proposition \ref{PROPOInvarRegioPhaseSpace}, we observe that $x' \geq 0$, $y' \leq 0$, so that the coordinate $x$ being bounded from above and the coordinate $y$ being bounded from below, the solution is globally defined, and its integral curve should converge towards a limit, which has to be an equilibrium, but there is no such an equilibrium in $\mathcal{R}$ intersected with the quadrant $x \leq 0, y \geq 0$, so all the solutions enter the region $x \geq 0, y \geq 0$ in finite time. The proof is now complete.
\end{proof}

\noindent
Therefore, we need only to study the phase portrait of the ODE system \eqref{EQUATSyste2dMmt_Cas_MaxwlRehomGammaGener} in the first quadrant intersected with the invariant region $\mathcal{R}$, as pictured in Figure \ref{FIGURPhasePortrMomntMaxvlRehomGam>0}.

\begin{figure}[h]
    \centering
    \includegraphics[scale=0.3]{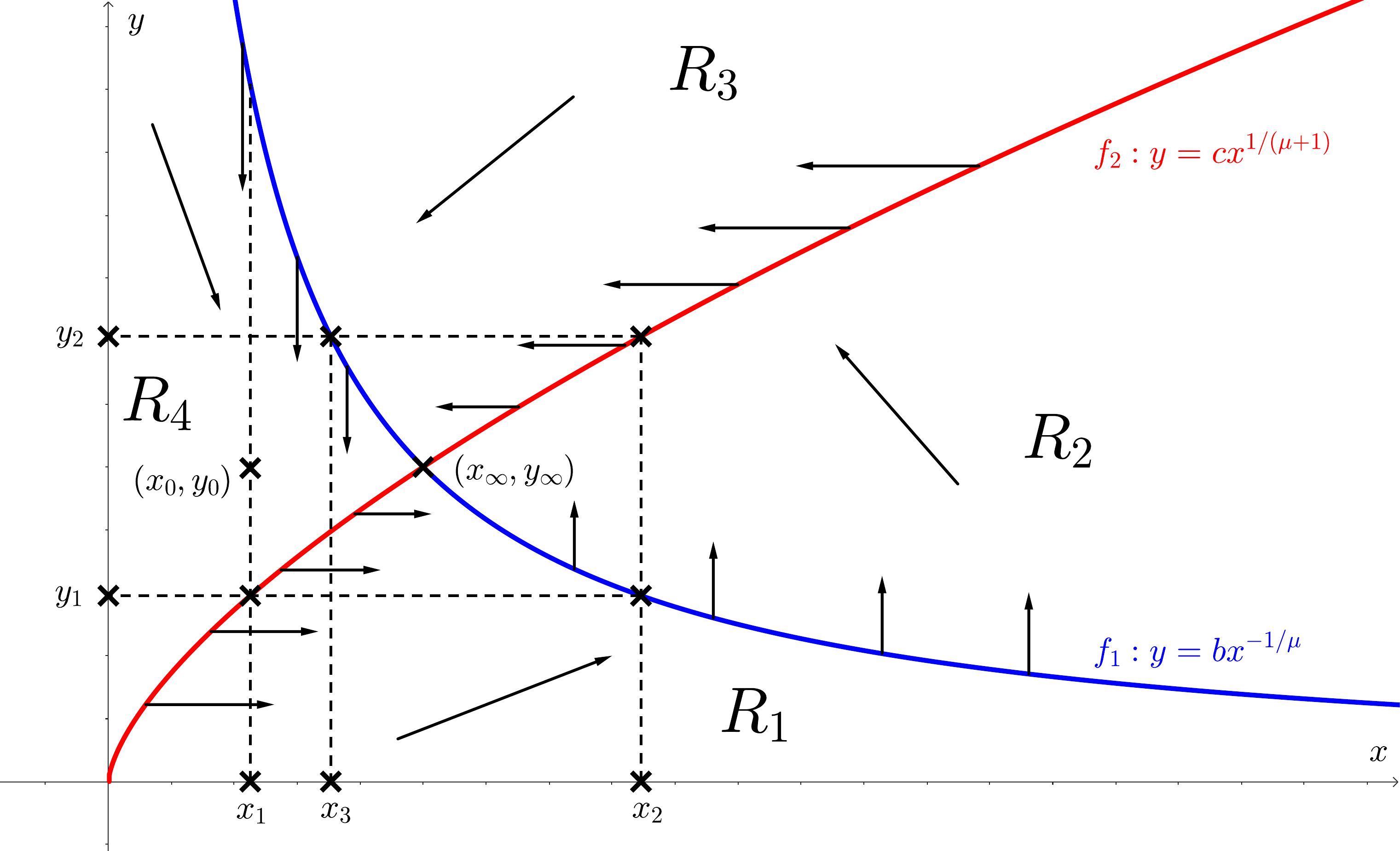}
    \caption{The phase portrait of the ODE system \eqref{EQUATSyste2dMmt_Cas_MaxwlRehomGammaGener}. The vertical isocline is represented in blue, the horizontal isocline is represented in red. At the intersection between the two isoclines lies the only equilibrium of the system \eqref{EQUATSyste2dMmt_Cas_MaxwlRehomGammaGener}.}
    \label{FIGURPhasePortrMomntMaxvlRehomGam>0}
\end{figure}

\noindent
We obtain the following result.

\begin{prop}
Let $a \in \mathbb{R}^d$ be a fixed vector, and let $C_d$ be as in \eqref{EQUATConst_C_d_}. Let $\mu > 0$ be a strictly positive real number.\\
Then, any solution of the ODE system \eqref{EQUATSyste2dMmt_Cas_MaxwlRehomGammaGener} that initially starts in the region $\mathcal{R}$, where $\mathcal{R}$ is defined in \eqref{EQUATPhysiRegioPhaseSpace}, converges towards the unique fixed point of the system \eqref{EQUATSyste2dMmt_Cas_MaxwlRehomGammaGener} contained in the region $\mathcal{R}$.
\end{prop}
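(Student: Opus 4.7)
The plan is to apply the Poincar\'e-Bendixson theorem in the plane, together with the Bendixson-Dulac criterion to exclude periodic orbits. By Proposition \ref{PROPOInvarRegioPhaseSpaceQuadr}, after a finite time every solution starting in $\mathcal{R}$ enters and remains in the invariant set $\mathcal{R}\cap\{x\geq 0,\,y\geq 0\}$, so I can restrict attention to trajectories lying entirely in this set.

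The first step is to establish that trajectories are bounded. The $\mathcal{R}$-constraint gives $|x|\leq |a|\sqrt{M_0\, y}$, which inserted into the second equation of \eqref{EQUATSyste2dMmt_Cas_MaxwlRehomGammaGener} yields
\begin{align*}
\frac{\dd}{\dd t} y(t) \leq 2|a|\sqrt{M_0\, y} - C_d(1-r^2)\,y^{\mu+1}.
\end{align*}
Since $\mu+1>1/2$, for $y$ large enough the right-hand side is strictly negative, producing a uniform upper bound on $y(t)$, hence also on $|x(t)|$ via the $\mathcal{R}$-constraint. The second step is to exclude periodic orbits by the Bendixson-Dulac criterion applied with weight $B\equiv 1$: denoting by $(f,g)$ the vector field in \eqref{EQUATSyste2dMmt_Cas_MaxwlRehomGammaGener}, a direct differentiation gives
\begin{align*}
\partial_x f + \partial_y g = -C_d\,y^\mu\bigl[(1+r) + (1-r^2)(\mu+1)\bigr],
\end{align*}
which is strictly negative everywhere in $\{y>0\}$. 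Consequently no closed orbit of \eqref{EQUATSyste2dMmt_Cas_MaxwlRehomGammaGener} can lie in $\{y>0\}$.

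Combining these two ingredients, the Poincar\'e-Bendixson theorem forces the $\omega$-limit set of any bounded trajectory to be a non-empty, compact, connected, invariant set that cannot be a periodic orbit and must therefore contain the unique equilibrium $(x_\infty,y_\infty)$ exhibited in the preceding paragraph. What I expect to be the main obstacle is to rule out the remaining possibility of a homoclinic loop based at $(x_\infty,y_\infty)$, which I handle by checking that this equilibrium is locally asymptotically stable via its linearization: the Jacobian has trace $-C_d y_\infty^\mu\bigl[(1+r)+(1-r^2)(\mu+1)\bigr]<0$ and determinant $C_d^2(1+r)(1-r^2)(\mu+1)\,y_\infty^{2\mu} + 2C_d(1+r)\,\mu\, x_\infty\, y_\infty^{\mu-1}>0$, so both eigenvalues have strictly negative real part. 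A homoclinic loop is thus excluded, the $\omega$-limit set collapses to $\{(x_\infty,y_\infty)\}$, and convergence follows. A minor technical point to dispatch along the way is to verify that trajectories stay at positive distance from the boundary $\{y=0\}$ (where the vector field is only continuous for $\mu\in(0,1)$): this is true because $y(t)\to 0^+$ along a trajectory would force $x(t)\to 0$ by the $\mathcal{R}$-constraint, while $f(0,0)=|a|^2 M_0>0$ prevents any accumulation at the origin.
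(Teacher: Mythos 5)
Your proof is correct, but it takes a genuinely different route from the paper's. The paper performs an explicit nullcline analysis: it divides the first quadrant into the four regions cut out by the vertical and horizontal isoclines of \eqref{EQUATSyste2dMmt_Cas_MaxwlRehomGammaGener}, shows that a trajectory either eventually remains in one region (where monotonicity of both coordinates forces convergence) or visits the regions cyclically forever, and in the latter case proves by an explicit algebraic computation that the abscissas of the successive re-entry points into a fixed region form a strictly increasing sequence converging to $x_\infty$ --- in effect a hand-built Poincar\'e return-map argument. You instead invoke the standard planar package: boundedness of forward trajectories from the constraint $x^2\leq |a|^2M_0\,y$ of \eqref{EQUATPhysiRegioPhaseSpace}, the Bendixson--Dulac criterion with weight $1$ (your divergence $-C_d y^\mu\left[(1+r)+(1-r^2)(\mu+1)\right]<0$ is computed correctly) to exclude closed orbits in $\{y>0\}$, and Poincar\'e--Bendixson together with the signs of the trace and determinant of the Jacobian at $(x_\infty,y_\infty)$ (also computed correctly; the equilibrium is a hyperbolic sink, so it admits no nontrivial unstable set and hence no homoclinic graphic). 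Your argument is shorter and less computation-heavy; the paper's is more elementary and self-contained in that it never appeals to Poincar\'e--Bendixson, and it yields additional qualitative information (the spiralling itinerary $R_1\rightarrow R_2\rightarrow R_3\rightarrow R_4\rightarrow R_1$ of non-monotone trajectories).

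Two points in your write-up deserve to be made more explicit, though neither is a gap in substance. First, the exclusion of accumulation at the origin: the statement ``$f(0,0)=|a|^2M_0>0$ prevents any accumulation'' is correct but terse. A clean way to justify it is to note that on the set $W_\delta=\mathcal{R}\cap\{x\geq 0,\ y\geq 0,\ x+y\leq\delta\}$ one has, for $\delta$ small, $\frac{\dd}{\dd t}(x+y)\geq \frac{1}{4}|a|^2M_0>0$, so $x+y$ is eventually bounded below along any trajectory; combined with $x^2\leq|a|^2M_0\,y$ this gives $\liminf_{t\to+\infty}y(t)>0$, which is exactly what you need both to apply Bendixson--Dulac on the simply connected set $\{y>0\}$ and to have a $\mathcal{C}^1$ vector field near the closure of the tail of the trajectory when $\mu\in\ ]0,1[$. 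Second, in the Poincar\'e--Bendixson trichotomy the case you must rule out is an $\omega$-limit set consisting of the equilibrium together with orbits homoclinic to it; your linearization argument does this, but you could equally note that the same Green's-theorem computation behind Bendixson--Dulac already excludes any closed invariant curve, homoclinic graphics included, in $\{y>0\}$.
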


\begin{proof}
To study the qualitative behaviour of the integral curves of \eqref{EQUATSyste2dMmt_Cas_MaxwlRehomGammaGener}, we determine first the vertical and horizontal isoclines, providing that a single equilibrium of the system exists in the hypergraph of the parabola $x^2 = \vert a \vert^2 M_0 y$, and it lies in the first quadrant.

\noindent
The vertical isocline, characterized by the points $(x,y)$ of the phase space for which the integral curve through this point satisfies $x' = 0$, is the curve (represented in blue in Figure \ref{FIGURPhasePortrMomntMaxvlRehomGam>0}) of equation %% ERR: "of equation", and not "with"
\begin{align}
y = b x ^{-1/\mu} \hspace{3mm} \text{with} \hspace{3mm} b = \left(\frac{\vert a \vert^2 M_0}{C_d(1+r)}\right)^{1/\mu}.
\end{align}
In the same way, the horizontal isocline (such that $y'=0$) is the curve (represented in red in Figure \ref{FIGURPhasePortrMomntMaxvlRehomGam>0}) of equation
\begin{align}
y = c x^{1/(\mu+1)} \hspace{3mm} \text{with} \hspace{3mm} c = \left( \frac{2}{C_d (1-r^2)} \right)^{1/(\mu+1)}.
\end{align}
In addition to the fact that their intersection corresponds to the only equilibrium of \eqref{EQUATSyste2dMmt_Cas_MaxwlRehomGammaGener}, the two isoclines separate the phase space in four different regions $R_i$, $1 \leq i \leq 4$, labelled counter-clockwise (see Figure \ref{FIGURPhasePortrMomntMaxvlRehomGam>0}): $R_1$, below the two isoclines, $R_2$ below the horizontal isocline and above the vertical isocline, $R_3$ above the two isoclines, and finally $R_4$ above the horizontal isocline and below the vertical isocline.\\
\newline
We observe now that if a solution enters in one of the regions $R_i$, and remains in this region for any positive time, then the solution is global, and it converges towards the only equilibrium $(x_\infty,y_\infty)$. Let us present the arguments that allow to reach such a result, for the particular case of the region $R_1$. The cases of the three other regions can be studied in the exact same manner. So, we assume that we have a solution that remains in $R_1$ for any positive time of its interval of definition. In $R_1$, we have $x' \geq 0$ and $y' \geq 0$. In particular, we have $y(t) \geq y(0)$ for any $t \geq 0$ of the time interval of definition. The intersection between $R_1$ and $y \geq y(0)$ defines a bounded region of the phase space, we deduce therefore that such a solution has to be global. The coordinates $x$ and $y$ of the solution being bounded from above and increasing, they both converge towards finite limits as $t \rightarrow +\infty$. %% ERR: change to the plural concerning the limits here, and addition of "both".
Such a limit has therefore to be an equilibrium, because if not, one of the derivatives $x'$ or $y'$ converges towards a non zero limit, which in turn contradicts the boundedness of the solution.\\
\newline
We have then proved, for any solution, that either such a solution eventually remains in one of the four regions $R_i$ forever, in which case the solution converges towards the equilibrium, or such a solution leaves any of the four regions $R_i$ it entered in finite time. It remains to study this second case in more details.\\
Let us consider a solution initially in $R_1$, that leaves this region. Since the first quadrant is an invariant region, the solution cannot leave $R_1$ through the first axis $y = 0$. The solution cannot neither leave $R_1$ through the equilibrium. Finally, let us observe that at the intersection between $R_1$ and $R_4$ (except at the equilibrium), the derivative of a solution at such a point satisfies $x' > 0$, so that no solution cannot leave $R_1$ by entering $R_4$. In the end, the only way a solution can leave $R_1$ is to enter $R_2$.\\
With the exact same arguments, we would obtain that the only way to leave $R_2$ is to enter $R_3$, the only way to leave $R_3$ is to enter $R_4$, and the only way to leave $R_4$ is to enter $R_1$. Therefore, a solution that leaves any of the regions $R_i$ in finite time travels the regions according to the cyclic order $R_1 \rightarrow R_2 \rightarrow R_3 \rightarrow R_4 \rightarrow R_1 \rightarrow \dots$, forever. %% ERR: addition of the word "cyclic".
Without loss of generality, we consider a solution initially starting from $\left(x_0,y_0\right) \in R_4$. We can now deduce the following.
\begin{itemize}
\item In $R_4$, $x' \geq 0$ and $y' \leq 0$. In particular, as long as the solution remains in $R_4$ we have $x(t) \geq x_0 = x_1$. The intersection between $R_4$ and $x \geq x_0$ is inside $y \geq y_1$ (see Figure \ref{FIGURPhasePortrMomntMaxvlRehomGam>0}), with:
\begin{align}
y_1 = c x_1^{1/(\mu+ 1)}.
\end{align}
\item The solution enters $R_1$ at a point $(x_{R_1},y_{R_1})$ such that $x_{R_1} \geq x_1$ and $y_{R_1} \geq y_1$. But now in $R_1$, we have $x' \geq 0$ and $y' \geq 0$. Therefore, in $R_1$ the solution remains in the region $y \geq y_{R_1} \geq y_1$. It implies that the solution remains in the region $x \leq x_2$, with
\begin{align}
y_1 = b x_2^{-1/\mu} \hspace{3mm} \text{that is} \hspace{3mm} x_2 = \left(y_1/b\right)^{-\mu}.
\end{align}
\item The solution enter then $R_2$ at a point $(x_{R_2},y_{R_2})$, with $x_{R_2} \leq x_2$. In $R_2$, since $x' \leq 0$, the solution remains in the region $x \leq x_2$. As a consequence, the solution remains in the region $y \leq y_2$, with
\begin{align}
y_2 = c x_2^{1/(\mu+1)}.
\end{align}
\item When the solution enters afterwards in $R_3$, it does at a point $(x_{R_3},y_{R_3})$ with $y_{R_3} \leq y_2$. In $R_3$ we have $y' \leq 0$, so that the solution remains in the region $y \leq y_2$. Therefore, the solution remains in the region $x \geq x_3$ before it leaves $R_3$, with
\begin{align}
y_2 = b x_3^{-1/\mu} \hspace{3mm} \text{that is} \hspace{3mm} x_3 = (y_2/b)^{-\mu}.
\end{align}
\end{itemize}
In the end, we obtained that when a solution starting from $(x_0,y_0)$ in $R_1$ leaves this region $R_1$, it re-enters this region $R_1$ for the first time at a point $(x_{R_1},y_{R_1})$ with
\begin{align}
x_{R_1} \geq x_3.
\end{align}
But by definition of the different values $x_i$, $y_i$, we have
\begin{align}
x_3 = \left( y_2/b \right)^{-\mu} = \left( \frac{c x_2^{1/(\mu+1)}}{b} \right)^{-\mu} = \left( \frac{c \left( y_1/b \right)^{-\mu/(\mu+1)}}{b} \right)^{-\mu} = \left( \frac{c \left( \displaystyle{\frac{ c x_1^{1/(\mu+1)}}{b}} \right)^{-\mu/(\mu+1)}}{b} \right)^{-\mu} \cdotp
\end{align}
Simplifying, we obtain
\begin{align}
x_3 = \frac{c^{-\mu} c^{\mu^2/(\mu+1)}}{b^{-\mu}b^{\mu^2/(\mu+1)}} x_1^{\mu^2/(\mu+1)^2} = \left( \frac{c}{b} \right)^{-\mu + \frac{\mu^2}{\mu + 1}} x_1^\frac{\mu^2}{(\mu+1)^2}.
\end{align}
We will now show that $x_3 > x_1$, assuming that $x_1 < x_\infty$, where $x_\infty$ is the abscissa of the equilibrium point $(x_\infty,y_\infty)$ of the system \eqref{EQUATSyste2dMmt_Cas_MaxwlRehomGammaGener}. Such a statement is equivalent to prove that
\begin{align}
\left( \frac{c}{b} \right)^{-\mu + \frac{\mu^2}{\mu + 1}} x_1^{\frac{\mu^2}{(\mu+1)^2}-1} > 1.
\end{align}
The first step to prove the result is to determine more explicitly the coordinates of the equilibrium point. By definition of this equilibrium, we have
\begin{align}
y_\infty = bx_\infty^{-1/\mu} \hspace{3mm} \text{and} \hspace{3mm} y_\infty = c x_\infty^{1/(\mu+1)},
\end{align}
so that
\begin{align}
\frac{b}{c} = x_\infty^{1/\mu}x_\infty^{1/(\mu+1)} = x_\infty^{\frac{2\mu+1}{\mu(\mu+1)}}.
\end{align}
By assumption, we considered an initial datum of the ODE system that belongs to the region $R_4$, so that we have $x_1 < x_\infty$. Observing that the number $\frac{\mu^2}{(\mu+1)^2}-1$ is negative, we see that the function $x \mapsto x^{\frac{\mu^2}{(\mu+1)^2}-1}$ is decreasing on $\mathbb{R}_+^*$, providing that
\begin{align}
x_1^{\frac{\mu^2}{(\mu+1)^2}-1} > x_\infty^{\frac{\mu^2}{(\mu+1)^2}-1}.
\end{align}
Computing now the right hand side of the last inequality we obtain
\begin{align}
x_\infty^{\frac{\mu^2}{(\mu+1)^2}-1} = \left( \left(\frac{b}{c}\right)^{\frac{\mu(\mu+1)}{2\mu+1}} \right)^{\frac{\mu^2}{(\mu+1)^2}-1}.
\end{align}
In the end, we will have $x_3 > x_1$ if
\begin{align}
\left( \frac{c}{b} \right)^{-\mu + \frac{\mu^2}{\mu + 1}} \left( \left(\frac{b}{c}\right)^{\frac{\mu(\mu+1)}{2\mu+1}} \right)^{\frac{\mu^2}{(\mu+1)^2}-1} \geq 1.
\end{align}
The power of $b/c$ in the previous inequality is
\begin{align}
&\mu - \frac{\mu^2}{\mu+1} + \frac{\mu(\mu+1)}{2\mu+1}\left(\frac{\mu^2}{(\mu+1)^2}-1\right) = \frac{\mu}{\mu+1} + \frac{\mu(\mu+1)}{2\mu+1}\left(\frac{\mu^2}{(\mu+1)^2}-1\right) \nonumber\\
&= \frac{\mu(2\mu+1) +\mu(\mu^2-(\mu+1)^2)}{(\mu+1)(2\mu+1)} \nonumber\\
&= \frac{2\mu^2 + \mu -2\mu^2 - \mu}{(\mu+1)(2\mu+1)} = 0.
\end{align}
Therefore, we have proved that, for $x_1 < x_\infty$, we have
\begin{align}
x_3 = \left( \frac{c}{b} \right)^{-\mu + \frac{\mu^2}{\mu+1}} x_1^{\frac{\mu^2}{(\mu+1)^2}-1} x_1 > \underbrace{\left( \frac{c}{b} \right)^{-\mu + \frac{\mu^2}{\mu+1}} x_\infty^{\frac{\mu^2}{(\mu+1)^2}-1}}_{=1} x_1 = x_1.
\end{align}
As a consequence, if we denote by $(x_{1,n})_{n\geq 0}$ the sequence of the abscissas of the consecutive re-entry points in $R_4$ of a given solution of \eqref{EQUATSyste2dMmt_Cas_MaxwlRehomGammaGener} that does not remain eventually in any of the four regions $R_i$, then we have proved that such a sequence is strictly increasing, bounded from above by $x_\infty$, hence converging, towards a certain limit $x_{1,\infty}$.\\
If we assume that $x_{1,\infty} < x_\infty$, we would reach a contradiction by considering a trajectory starting from a point with its abscissa equal to $x_{1,\infty}$: by continuity such a trajectory would leave $R_1$ and re-enter it, and the abscissa $\overline{x}$ of the point of re-entry would satisfy $x_{1,\infty} < \overline{x}$. Considering now two trajectories starting from points with respective abscissae $x_{1,n}$ and $x_{1,n+1}$, we would reach a contradiction by the continuity of the solutions of \eqref{EQUATSyste2dMmt_Cas_MaxwlRehomGammaGener} with respect to the initial data. We deduce that $x_{1,\infty} = x_\infty$, and therefore, any solution of \eqref{EQUATSyste2dMmt_Cas_MaxwlRehomGammaGener} that does not remain eventually in any of the four regions $R_i$ is global, and converges towards the only equilibrium of \eqref{EQUATSyste2dMmt_Cas_MaxwlRehomGammaGener} as $t \rightarrow +\infty$.
\end{proof}

\section{Conclusion}

In the present article, we studied the linear inelastic Boltzmann equation \eqref{EQUATIntroLineaInelaBoltz}, in the case of a Lorentz gas, in the space-homogeneous case, for Maxwell molecules and in the presence of a gravity field.\\
We obtained well-posedness results for such an equation, proving the existence and uniqueness of measure-valued solutions that solve the linear inelastic Boltzmann equation in the weak sense, for a given measure-valued initial datum. In addition, we proved that a unique steady state exists in the class of the non-negative Radon measures with finite first moment, and we proved that such a steady state is attracting all measure-valued solutions with a finite first moment, relying on the Fourier transform approach developed by A.~Bobylev.\\
We also provided a complete study of the system of moments associated to the rehomogeneized linear inelastic Boltzmann equation \eqref{EQUATIntroLineaInelaBoltzRehom} for Maxwell molecules.\\
\newline
It remains in future works to establish the same sort of well-posedness results for the linear inelastic Boltzmann equation, considering more general collision kernels. In particular, it is desirable to extend the results to the more physical case of the hard sphere collision kernel. Nevertheless, most of the methods developed in the present articles seem to fail in such a case: the semigroup approach is more difficult to apply, and the study of the system of the moments is notably more difficult outside the case of the Maxwell molecules.\\
It would be also interesting to solve the rehomogeneized linear inelastic Boltzmann equation \eqref{EQUATIntroLineaInelaBoltzRehom}. Indeed, although this equation cannot be considered as a completely relevant model from the physical point of view, due to the homogeneity of the collision operator \eqref{EQUATIntroLineaInelaBoltzRehom} the decay of the temperature of its solutions should reflect the behaviour of physical models.

\medskip

\begin{appendices}

\section{Evolution equation for the Fourier transform}\label{app:evolFourier}

Here, we write the equation satisfied by the Fourier transform $\mathcal{F}(t,\xi) = \int_{\R} f(t,v) e^{-i v\cdot \xi} \dd v$. Without loss of generality we will here assume that $\int_{\mathbb{S}^{d-1}}b(|N \cdot \omega|) \dd \omega=1$, which will allow us to simplify the computations. We start from the evolution equation \eqref{eq:LinInBolstrong_Cauchy} of $f$, and analyze the terms one by one. We have
%\begin{align}
%\partial_t \mathcal{F}(t,\xi) = \int_{\R} \partial_t f(t,v) e^{-i v\cdot\xi} \dd v,
%\end{align}
%we get 
\begin{align}
\partial_t \mathcal{F}(t,\xi) = - \int_{\R} a\cdot\partial_v f(t,v) e^{-i v\cdot\xi} \dd v + \int_{\R} \int_{\mathbb{S}^{d-1}} \frac{1}{r}b\left( \left\vert \frac{v}{\vert v \vert}\cdot\omega \right\vert \right) f(t,'\hspace{-1mm}v) e^{-iv\cdot\xi} \dd v - \mathcal{F}(t,\xi).
\end{align}
The first term on the right-hand side can be rewritten as
\begin{align}
-\int_{\R} a\cdot\partial_v f(t,v) e^{-i v\cdot\xi} \dd v = a\cdot\int_{\R} f(t,v) \partial_v e^{-iv\cdot \xi} \dd v = -i (a\cdot \xi) \mathcal{F}(t,\xi).
\end{align}
As far as the gain term is concerned, we start with applying the change of variable $'\hspace{-0.5mm}v \to v$ for $\omega$ fixed. This yields
\begin{align}
\int_{\R} \int_{\mathbb{S}^{d-1}} \frac{1}{r}b\left( \left\vert \frac{v}{\vert v \vert}\cdot\omega \right\vert \right) f(t,'\hspace{-1mm}v) e^{-iv\cdot\xi} \dd v = \int_{\R} \int_{\mathbb{S}^{d-1}} b\left( \left\vert \frac{v}{\vert v \vert}\cdot\omega \right\vert \right) f(t,v) e^{-iv'\cdot\xi} \dd v.
\end{align}
Then, we replace the $\omega$-representation by the $\sigma$-representation. We introduce the collision kernel in $\sigma$-representation $b_\sigma$, defined as
\begin{align}
b_\sigma\left( \left\vert \frac{v}{\vert v \vert} \cdot \sigma \right\vert \right) = b \left( \left\vert \frac{v}{\vert v \vert}\cdot\omega \right\vert\right)
\end{align}
for $\sigma = \frac{v}{\vert v \vert} - 2 \left( \frac{v}{\vert v \vert}\cdot\omega \right) \omega$, and we perform the change of variables $\omega \rightarrow \sigma$. Note that after this change of variables, $v'$ reads
\begin{align}
v'=v'(\sigma) = (1-r) \frac{v}{2} + (1+r) \frac{\vert v \vert}{2} \sigma.
\end{align}
To obtain a closed equation on the Fourier transform $\mathcal{F}$, we use the inversion formula %that reads
%\begin{align}
%f(t,v) = \int_{\R} \frac{\mathcal{F}(t,w)}{(2\pi)^d} e^{i v \cdot w}\dd w,
%\end{align}
from which the second term becomes
\begin{align}
\int_{\R}\int_{\R}\int_{\mathbb{S}^{d-1}} b_\sigma \left( \left\vert \frac{v}{\vert v \vert} \cdot \sigma \right\vert\right) \frac{\mathcal{F}(t,w)}{(2\pi)^d} e^{i v \cdot w} e^{- i \left[ \frac{1-r}{2} v + \frac{1+r}{2} \vert v \vert \sigma \right] \cdot \xi} \dd \sigma \dd w \dd v.
\end{align}
In the last integral, for $v$ and $w$ fixed, we introduce the orthogonal symmetry $R = R_{v,\xi}$, which maps $\frac{v}{\vert v \vert}$ to $\frac{\xi}{\vert \xi \vert}$, and we consider the change of variables $\sigma = R(\widetilde{\sigma})$. Using the fact that $R$ is an involution, we get that $R$ also sends  $\frac{\xi}{\vert \xi \vert}$ to $\frac{v}{\vert v \vert}$. Hence we have that
\begin{align}
\int_{\R}\int_{\R}\int_{\mathbb{S}^{d-1}} &b_\sigma \left( \left\vert \frac{v}{\vert v \vert} \cdot \sigma \right\vert\right) \frac{\mathcal{F}(t,w)}{(2\pi)^d} e^{i v \cdot w} e^{- i \left[ \frac{1-r}{2} v + \frac{1+r}{2} \vert v \vert \sigma \right] \cdot \xi} \dd \sigma \dd w \dd v \nonumber\\
&= \int_{\R} \int_{\R} \int_{\mathbb{S}^{d-1}} b_\sigma \left( \left\vert R\left(\frac{\xi}{\vert \xi \vert} \right)\cdot \sigma \right\vert\right) \frac{\mathcal{F}(t,w)}{(2\pi)^d} e^{i v \cdot w} e^{- i \left[ \frac{1-r}{2} v \cdot \xi + \frac{1+r}{2} \vert v \vert \vert \xi \vert \sigma \cdot  R \left( \frac{v}{\vert v \vert} \right) \right]} \dd \sigma \dd w \dd v \nonumber\\
&= \int_{\R} \int_{\R} \int_{\mathbb{S}^{d-1}} b_\sigma \left( \left\vert R\left(\frac{\xi}{\vert \xi \vert} \right)\cdot R(\widetilde{\sigma}) \right\vert\right) \frac{\mathcal{F}(t,w)}{(2\pi)^d} e^{i v \cdot w} e^{- i \left[ \frac{1-r}{2} v\cdot \xi + \frac{1+r}{2} \vert v \vert \vert \xi \vert R(\widetilde{\sigma}) \cdot  R \left( \frac{v}{\vert v \vert} \right) \right]} \dd \widetilde{\sigma} \dd w \dd v \nonumber\\
&= \int_{\R} \int_{\R} \int_{\mathbb{S}^{d-1}} b_\sigma \left( \left\vert \frac{\xi}{\vert \xi \vert} \cdot \widetilde{\sigma} \right\vert\right) \frac{\mathcal{F}(t,w)}{(2\pi)^d} e^{i v \cdot w} e^{- i \left[ \frac{1-r}{2} v\cdot\xi + \frac{1+r}{2} \vert \xi \vert \widetilde{\sigma} \cdot  v \right]} \dd \widetilde{\sigma} \dd w \dd v.
\end{align}
Rearranging the terms, the Fourier transform of the gain term can be rewritten as
\begin{align}
\int_{\R} \int_{\R} \int_{\mathbb{S}^{d-1}} b_\sigma \left( \left\vert \frac{\xi}{\vert \xi \vert} \cdot \sigma \right\vert\right) \frac{\mathcal{F}(t,w)}{(2\pi)^d} e^{-i v\cdot \left[ -w + \frac{1-r}{2} \xi + \frac{1+r}{2} \vert \xi \vert \sigma \right]} \dd\sigma \dd w \dd v.
\end{align}
Consider now the following formula
\begin{align}\label{eq:inversionFormula}
\int_{\R} \int_{\R} \varphi(y) e^{-ix\cdot y} \dd y \dd x = \varphi(0),
\end{align}
which has to be understood in the sense $\displaystyle{\int_{\R} \widehat{\varphi}(x) \dd x = \int_{\R} \widehat{\varphi}(x) e^{i 0\cdot x} \dd x = (2\pi)^d \varphi(0)}$, where $\widehat{\varphi}$ is the Fourier transform of $\varphi$. Applying \eqref{eq:inversionFormula} to the function
\begin{align}
 \int_{\mathbb{S}^{d-1}} b_\sigma \left( \left\vert \frac{\xi}{\vert \xi \vert} \cdot \sigma \right\vert\right) \frac{\mathcal{F}(t, -w + \frac{1-r}{2} \xi + \frac{1+r}{2} \vert \xi \vert \sigma )}{(2\pi)^d} \dd\sigma,
\end{align}
we find in the end the following evolution equation for the Fourier transform $\mathcal{F}$ of the solution $f$ of \eqref{eq:LinInBolstrong_Cauchy}, namely
\begin{align}
\partial_t \mathcal{F}(t,\xi) = -i (a\cdot\xi) \mathcal{F}(t,\xi) + \int_{\mathbb{S}^{d-1}} b_\sigma \left( \left\vert \frac{\xi}{\vert \xi \vert} \cdot \sigma \right\vert\right) \mathcal{F}(t, \frac{1-r}{2} \xi + \frac{1+r}{2} \vert \xi \vert \sigma ) \dd\sigma - \mathcal{F}(t,\xi),
\end{align}
or rearranging the terms
\begin{align}
\label{EQUATFouriDifferentiVersi}
\partial_t \mathcal{F}(t,\xi) + \left( 1 + i(a\cdot\xi) \right) \mathcal{F}(t,\xi) = Q^+(\mathcal{F})(t, \xi),
\end{align}
where $\overline{\xi}$ is
\vspace{-7mm}
\begin{align}
\overline{\xi} = \frac{1-r}{2} \xi + \frac{1+r}{2} \vert \xi \vert \sigma.
\end{align}
and where $Q^+$ is the gain operator acting on $\mathcal{F}$
\begin{equation}\label{eq:gainQ^+}
     Q^+(\mathcal{F}) = \int_{\mathbb{S}^{d-1}} b_\sigma \left( \left\vert \frac{\xi}{\vert \xi \vert} \cdot \sigma \right\vert\right) \mathcal{F}(t, \overline{\xi} ) \dd\sigma
\end{equation}
\noindent
We remark that only very mild assumptions on the collision kernel $b$ are necessary to obtain that $\displaystyle{\int_{\mathbb{S}^{d-1}} b_\sigma \left( \left\vert \frac{\xi}{\vert \xi \vert} \cdot \sigma \right\vert\right) \mathcal{F}(t, \overline{\xi} ) \dd\sigma}$ is continuous provided that $\mathcal{F}(t,\cdot)\in \mathcal{C}([0,+\infty),\mathcal{C}_0(\R))$ . In fact, assuming that $\mathcal{F}$ is continuous, given $\xi, \xi_* \in \R$ we have that
\begin{align}
\left\vert \mathcal{F}(\xi) - \mathcal{F}(\xi_*) \right\vert = \left\vert \int_{\mathbb{S}^{d-1}} b_\sigma\left( \left\vert e_1 \cdot \sigma \right\vert \right) \left[ \mathcal{F}(\overline{\xi}_s) - \mathcal{F}(\overline{\xi_*}_c)\right] \dd \sigma \right\vert,
\end{align}
where $e_1$ denotes the first vector of the canonical basis and
\begin{align}
\overline{\xi}_R = \frac{1-r}{2} \xi + \frac{1+r}{2} \vert \xi \vert R_\xi(\sigma), \hspace{5mm} \text{and} \hspace{5mm} (\overline{\xi_*})_R = \frac{1-r}{2} \xi_* + \frac{1+r}{2} \vert \xi_* \vert R_{\xi_*}(\sigma),
\end{align}
with $R_{\xi}$ (respectively $R_{\xi_*}$) the orthogonal symmetry that sends $e_1$ to $\xi/\vert \xi \vert$ (respectively to $\xi_*/\vert \xi_* \vert$). The continuity follows from the fact that %% ERR: "conclusion" replaced by "continuity".
\begin{align}
\left\vert \left( \frac{1-r}{2} \xi + \frac{1+r}{2} \vert \xi \vert R_\xi(\sigma) \right) - \left( \frac{1-r}{2} \xi_* + \frac{1+r}{2} \vert \xi_* \vert R_{\xi_*}(\sigma) \right) \right\vert \longrightarrow 0
\end{align}
as $\vert \xi - \xi_* \vert \rightarrow 0$.

\end{appendices}

\bigskip

\noindent\textbf{Acknowledgements.} 
A.~Nota and T.~Dolmaire gratefully
acknowledge %% ERR: without s
the support
%has been supported 
by the project PRIN 2022 (Research Projects of National Relevance)
- Project code 202277WX43.\\
The authors are grateful to B.~Lods for useful discussions on the topic. 
%The funders had no role in study design, analysis, decision to publish, or preparation of the manuscript. 
\bigskip

\bigskip

\normalsize

\def\adresse{
\begin{description}

\item[T.~Dolmaire] { 
Dipartimento di Ingegneria e Scienze\\ dell'Informazione e Matematica (DISIM),\\ Università degli Studi dell'Aquila, \\ 67100  L'Aquila, Italy \\  E-mail: \texttt{theophile.dolmaire@univaq.it}} 

\item[N. Miele]{ Gran Sasso Science Institute,\\ Viale Francesco Crispi 7, 67100 L’Aquila, Italy  \\
E-mail: \texttt{nicola.miele@gssi.it}}

\item[A. Nota:] {Gran Sasso Science Institute,\\ Viale Francesco Crispi 7, 67100 L’Aquila, Italy  \\
E-mail: \texttt{alessia.nota@gssi.it}}

\end{description}
}

\adresse
\end{document}